\pdfoutput=1
\RequirePackage{ifpdf}
\ifpdf 
\documentclass[pdftex]{sigma}
\else
\documentclass{sigma}
\fi

\usepackage{cleveref}
\usepackage{tensor}
\usepackage{tikz}

\newcommand{\bsqcup}{\text{\b{$\sqcup$}}}

\DeclareMathAlphabet{\eusm}{U}{}{}{} 
\SetMathAlphabet\eusm{normal}{U}{eus}{m}{n}
\SetMathAlphabet\eusm{bold}{U}{eus}{b}{n}

\DeclareMathAlphabet{\eufrak}{U}{}{}{} 
\SetMathAlphabet\eufrak{normal}{U}{euf}{m}{n}
\SetMathAlphabet\eufrak{bold}{U}{euf}{b}{n}

\numberwithin{equation}{section}

\newtheorem{Theorem}{Theorem}[section]
\newtheorem{Lemma}[Theorem]{Lemma}
\newtheorem{Corollary}[Theorem]{Corollary}
\newtheorem{Proposition}[Theorem]{Proposition}
\newtheorem*{thm*}{Theorem}

\theoremstyle{definition}
\newtheorem{Definition}[Theorem]{Definition}
\newtheorem{Example}[Theorem]{Example}
\newtheorem{Remark}[Theorem]{Remark}
\newtheorem{notation}[Theorem]{Notation}

\begin{document}
\allowdisplaybreaks

\newcommand{\arXivNumber}{2203.05852}

\renewcommand{\thefootnote}{}

\renewcommand{\PaperNumber}{067}

\FirstPageHeading

\ShortArticleName{De Finetti Theorems for the Unitary Dual Group}

\ArticleName{De Finetti Theorems for the Unitary Dual Group\footnote{This paper is a~contribution to the Special Issue on Non-Commutative Algebra, Probability and Analysis in Action. The~full collection is available at \href{https://www.emis.de/journals/SIGMA/non-commutative-probability.html}{https://www.emis.de/journals/SIGMA/non-commutative-probability.html}}}

\Author{Isabelle BARAQUIN~$^{\rm a}$, Guillaume C\'EBRON~$^{\rm b}$, Uwe FRANZ~$^{\rm a}$, \newline
Laura MAASSEN~$^{\rm c}$ and Moritz WEBER~$^{\rm d}$}

\AuthorNameForHeading{I.~Baraquin, G.~C\'ebron, U.~Franz, L.~Massen and M.~Weber}

\Address{$^{\rm a)}$~Laboratoire de math\'ematiques de Besan\c{c}on, UMR 6623, CNRS,\\
\hphantom{$^{\rm a)}$}~Universit\'e Bourgogne Franche-Comt\'e, 16 route de Gray, F-25000 Besan\c{c}on, France}
\EmailD{\href{mailto:isabelle.baraquin@univ-fcomte.fr}{isabelle.baraquin@univ-fcomte.fr}, \href{mailto:uwe.franz@univ-fcomte.fr}{uwe.franz@univ-fcomte.fr}}

\Address{$^{\rm b)}$~Institut de Math\'ematiques de Toulouse, UMR5219, Universit\'e de Toulouse, CNRS,\\
\hphantom{$^{\rm b)}$}~UPS, F-31062 Toulouse, France}
\EmailD{\href{mailto:guillaume.cebron@math.univ-toulouse.fr}{guillaume.cebron@math.univ-toulouse.fr}}

\Address{$^{\rm c)}$~Formerly: RWTH Aachen University, Pontdriesch 10--16, 52062 Aachen, Germany}
\EmailD{\href{mailto:laura.maassen@t-online.de}{laura.maassen@t-online.de}}

\Address{$^{\rm d)}$~Saarland University, Fachbereich Mathematik, Postfach 151150,\\
\hphantom{$^{\rm d)}$}~D-66041 Saarbr\"ucken, Germany}
\EmailD{\href{mailto:weber@math.uni-sb.de}{weber@math.uni-sb.de}}

\ArticleDates{Received March 25, 2022, in final form August 31, 2022; Published online September 13, 2022}

\Abstract{We prove several de Finetti theorems for the unitary dual group, also called the Brown algebra. Firstly, we provide a finite de Finetti theorem characterizing $R$-diagonal elements with an identical distribution. This is surprising, since it applies to finite sequences in contrast to the de Finetti theorems for classical and quantum groups; also, it does not involve any known independence notion. Secondly, considering infinite sequences in $W^*$-probability spaces, our characterization boils down to operator-valued free centered circular elements, as in the case of the unitary quantum group $U_n^+$. Thirdly, the above de Finetti theorems build on dual group actions, the natural action when viewing the Brown algebra as a dual group. However, we may also equip the Brown algebra with a bialgebra action, which is closer to the quantum group setting in a way. But then, we obtain a no-go de Finetti theorem: invariance under the bialgebra action of the Brown algebra yields zero sequences, in $W^*$-probability spaces. On the other hand, if we drop the assumption of faithful states in $W^*$-probability spaces, we obtain a non-trivial half a de Finetti theorem similar to the case of the dual group action.}

\Keywords{de Finetti theorem; distributional invariance; exchangeable; Brown algebra; unitary dual group; $R$-diagonal elements; free circular elements}

\Classification{46L54; 46L65, 60G09}

\renewcommand{\thefootnote}{\arabic{footnote}}
\setcounter{footnote}{0}

\section{Introduction}\label{sec-intro}

In this work, we provide de Finetti theorems for the unitary dual group, also called the Brown algebra or the Brown--Glockner--von Waldenfels algebra.
De Finetti theorems have a long tradition in probability theory. In a nutshell, the aim is to characterize some notion of independence and a distribution law by distributional symmetries of a sequence of random variables. The question is how a symmetry object on the one side corresponds to a distributional statement on the other side.

More precisely, the classical de Finetti theorem states the following: A sequence $(x_i)_{i\in\mathbb N}$ of real-valued random variables is (conditionally) independent and identically distributed if and only if it is exchangeable, i.e., if and only if for all $n\in\mathbb N$ the distribution of $(x_1,\dots,x_n)$ is invariant under permutation. Hence, i.i.d.\ sequences are characterized by the action of the symmetric group $S_n$ and we may say that this is the distributional symmetry of classical independence.

Now, strengthenings on the side of symmetries -- for instance by passing to groups containing~$S_n$~-- implies certain distribution laws on the side of the sequence. Moreover, we may ask for de Finetti theorems for other types of independences. Such de Finetti theorems have been studied in various contexts going beyond the usage of groups as symmetry objects or classical independence from probability theory.

\subsection{Overview on some de Finetti theorems in free probability theory}

Let us briefly sketch some de Finetti theorems in free probability theory. Let $(x_i)_{i\in\mathbb N}$ be a~sequence of random variables in a noncommutative $W^*$-probability space satisfying certain assumptions specified below. We have
\begin{center}\renewcommand{\arraystretch}{1.2}
\begin{tabular}{l|l|l|l}
Assumptions on $(x_i)$ &Distributional properties&Symmetry object&Ref.\\
\hline \hline
$x_i=x_i^*$, $x_ix_j=x_jx_i$&class. indep.&symm. group $S_n$&\cite{Definetti}\\
&&or spreadability&\\
\hline
$x_i=x_i^*$, $x_ix_j=x_jx_i$&class. indep., $\mathbb R$-Gaussian&orth. group $O_n$&\cite{Freedman}\\
\hline
$x_ix_j=x_jx_i$&class. indep., $\mathbb C$-Gaussian&unitary group $U_n$&\cite{Freedman}\\
\hline
\hline
$x_i=x_i^*$&free indep.& symm. qu. group $S_n^+$&\cite{koestler+speicher}\\
&&or qu. spreadability &\cite{curran2}\\
\hline
$x_i=x_i^*$&free indep., semicircular& orth. qu. group $O_n^+$&\cite{banica+curran+speicher}\\
\hline
no assumption&free indep., circular& unitary qu. group $U_n^+$&\cite{curran}\\
\hline
\hline
$x_i=x_i^*$&Boolean indep.& symm. qu. semigr.&\cite{liu}\\
&& or Bool. spreadability&\cite{liu2}\\
\hline
$x_i=x_i^*$&Boolean indep., Bernoulli& orth. qu. semigr.&\cite{liu3}\\
\hline
$x_i=x_i^*$&monotone indep.& mon. spreadability&\cite{liu2}\\
\hline
\hline
$x_i=\big(x_i^l, x_i^r\big)$&bi-free indep.&strongly qu. bi-invar.&\cite{freslon+weber}
\end{tabular}
\end{center}

Let us comment on the above table. Firstly, the classical results are well-known, an exposition may be found in \cite{kallenberg}. Note that exchangeability, as a characterization of i.i.d.\ sequences, may be relaxed to spreadability, i.e., the distribution of $(x_{i_1},\dots,x_{i_n})$ needs to be the same as that of $(x_1,\dots,x_n)$ for all $i_1<\dots<i_n$ and all $n$.

In the free case, K\"ostler and Speicher \cite{koestler+speicher} proved that free independence is characterized by quantum exchangeability using the quantum analog of the symmetric group, namely Wang's quantum permutation group $S_n^+$. This quantum symmetry has been relaxed to quantum spreadability by Curran~\cite{curran2} building on So{\l}tan's quantum families of maps~\cite{soltan}. On the other hand, the free de Finetti theorem has been strengthened in \cite{banica+curran+speicher} to several other quantum groups containing~$S_n^+$, for instance to $O_n^+$, but also to~$H_n^+$ and~$B_n^+$. Note that in free probability the semicircular distribution plays exactly the role of the real Gaussian in classical probability theory, for instance in terms of central limit theorems~\cite{nica+speicher}. The non-selfadjoint situation has been treated by Curran~\cite{curran}. In \cite{banica+curran+speicher}, there are also half-liberated versions of de Finetti theorems. Liu showed that intermediate quantum groups do not necessarily give strengthenings of the distributional properties~\cite{liu3}.

For Boolean independence, one has to employ quantum semigroup versions of the above quantum groups, see \cite{hayase, liu}. The crucial feature is that all algebras are non-unital. For instance, the relation $\sum_ku_{ik}=1$ in $C(S_n^+)$ is replaced by $\sum_ku_{ik}P=P$ for some projection $P$. Again, the Bernoulli distribution is the correct analog of the real Gaussian. There is no unitary version of a Boolean de Finetti theorem.

As for monotone independence, Liu \cite{liu2} proved a de Finetti theorem which adapts Curran's quantum spreadability to the monotone situation. There is no kind of quantum exchangeability for monotone independence and hence no quantum group like object involved. Also, there are no strengthenings to distributional descriptions of ``Gaussians''.

For bi-free independence, Freslon and the fifth author gave a de Finetti theorem~\cite{freslon+weber}. Here, we do not consider single variables $x_i$ but rather pairs $\big(x_i^l,x_i^r\big)$. The symmetry is basically given by the quantum permutation group $S_n^+$, but the action is more complicated. Moreover, the de Finetti theorem requires some technical assumption (the splitting property) which hopefully may be removed someday. Again, there is no ``Gaussian'' version of this de Finetti theorem.

\subsection{The role of combinatorics in the proofs: partitions of sets}

For the proofs of the above de Finetti theorems, a major role is played by the combinatorics underlying the respective independence concepts and the other distributional properties. More precisely, the proofs mainly go by decomposing the functional $\varphi$ of the noncommutative probability space into cumulants indexed by partitions of sets. Then, both the independence as well as the distribution (such as ``Gaussianity'') are reflected by the choice of the partitions. On the other hand, the algebraic relations of the quantum algebraic objects are also described by partitions. This provides the link and is the essence in the proofs of all de Finetti theorems.\looseness=1

We recall that classical independence is governed by all partitions of sets, the real Gaussian arises from a restriction to pair partitions, and in the complex case we have to involve two colors for the points of the partitions. The groups $S_n$, $O_n$ and $U_n$ obey exactly the same combinatorics. As for free independence and the corresponding quantum groups, all we have to do is to restrict to noncrossing (also called planar) partitions, in the Boolean case we use interval partitions, in the monotone case there are linearly ordered partitions, and bi-noncrossing partitions in the bi-free case.

\subsection{Different kinds of actions}

Secondly, an important feature of a de Finetti theorem is to specify the kind of action. While we have multiplicative actions in the classical and the free case, we must restrict to linear actions in the case of Boolean and bi-free independence. Note that for both free independence and bi-free independence, the symmetry object is the quantum permutation group $S_n^+$. However, in the first case, the action is a multiplicative one whereas in the second case, it is only linear (and also twisted). Thus, the right choice of the kind of action is an important ingredient in the precise formulation of de Finetti theorems.

\subsection{Further reading}

Besides the above mentioned articles related to de Finetti theorems, let us also mention the work by K\"ostler on various exchangeabilities \cite{gohm+koestler,gohm+koestler2,koestler}, and various de Finetti theorems in quantum information theory \cite{arnon-friedman+renner+vidick,brandao+harrow,lancien+winter} or quantum mechanics \cite{christandl+konig+mitchison+renner,hudson}.
See also \cite{Maassen2021,MR3448336} for further de Finetti theorems in the context of compact quantum groups.

\section{Main results}

In the present article, we give a number of de Finetti theorems for the unitary dual group, also called the Brown algebra, answering the question:
\begin{quote}
\emph{Which distributional symmetry is described by the unitary dual group as the symmetry object?}
\end{quote}

Recall that the Brown algebra \cite{brown, glockner+vonwaldenfels} is the universal complex $*$-algebra $\operatorname{Pol} (U_n^{{\rm nc}})$ generated by elements $u_{jk}$, $j,k\in\{1,\dots,n\}$ such that
\[\sum_{l=1}^n u_{lj}^*u_{lk}=\sum_{l=1}^n u_{jl}u_{kl}^*=\delta_{jk}1,\]
which is equivalent to the matrix $u=(u_{jk})_{1\leq j,k\leq n}$ being unitary, i.e., $u^*u=uu^*=1$. See also~\cite{mcclanahan, voiculescu} for more on the Brown algebra, also called the Brown--Glockner--von Waldenfels algebra. While this algebra does \emph{not} give rise to a compact matrix quantum group (since $u^t=(u_{ji})$ is not invertible \cite[Non-Example 4.1]{wang}), imposing the additional relation $u^t(u^t)^*=(u^t)^*u^t=1$ we obtain the algebra $\operatorname{Pol} (U_n^+)$ of Wang's free unitary quantum group $U_n^+$ \cite{wang}. Now, $U_n^+$ is a~compact matrix quantum group with comultiplication
\[\Delta\colon \ \operatorname{Pol} \big(U_n^+\big)\to \operatorname{Pol} \big(U_n^+\big)\otimes_{\min} \operatorname{Pol} \big(U_n^+\big)\]
to the tensor product, and $U_n^{{\rm nc}}$ is a dual group with a map to the free product:
\[\Delta\colon \ \operatorname{Pol} (U_n^{{\rm nc}})\to \operatorname{Pol} (U_n^{{\rm nc}})\sqcup \operatorname{Pol} (U_n^{{\rm nc}}).\]
Since we have a canonical $*$-homomorphism from $\operatorname{Pol} (U_n^{{\rm nc}})$ to $\operatorname{Pol} (U_n^+)$ mapping generators to generators, one is tempted to view $U_n^+$ as a ``subgroup'' of $U_n^{{\rm nc}}$. One could thus expect a~strengthening of Curran's de Finetti theorem \cite{curran} for $U_n^+$. However, our research reveals a more complex situation.

\subsection{Finite de Finetti theorems for dual group actions}
Firstly, note that while the unitary quantum group $U_n^+$ possesses a Haar state, by a general theorem by Woronowicz, this is \emph{not} the case for the unitary dual group $U_n^{{\rm nc}}$. However, the unitary dual group admits a Haar \emph{trace} as shown by the second author and Ulrich \cite{cebron+ulrich}. Now, due to the special nature of this Haar trace, we may even prove a \emph{finite} de~Finetti theorem for the unitary dual group, in contrast to the situation for $U_n^+$; here, we consider dual group actions, i.e., actions going into the free product of algebras, see Section~\ref{subsec-dga}.\looseness=1

\begin{thm*}[finite de Finetti theorem for dual group actions, \Cref{thm-DeFin1}]
Let $(x_1,\dots,x_n)$ be a finite sequence of random variables in a noncommutative probability space. The following are equivalent:
\begin{enumerate}\itemsep=0pt
\item[$1.$] The family $(x_1, \dots, x_n)$ is composed of $R$-diagonal elements such that the joint free cumulants are zero except those of type $\kappa_{2r} (x_{i_1}^*, x_{i_1}, \dots, x_{i_r}^*, x_{i_r} )$ and $\kappa_{2r}(x_{i_1}, x_{i_2}^*, x_{i_2},\dots, x_{i_r}^*, x_{i_r},\allowbreak x_{i_1}^*)$ for all $r\in \mathbb{N}$. Moreover, these cumulants depend only on the length $2r$. 
\item[$2.$] The family $(x_1, \dots, x_n)$ is invariant under the dual group action of $U_n^{{\rm nc}}$. 
\end{enumerate}
\end{thm*}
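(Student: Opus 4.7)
My plan is to reformulate the invariance at the level of free cumulants and combine two ingredients: the Nica--Speicher formula for cumulants of products of freely independent families, and the explicit description of the Haar trace $\tau$ on the Brown algebra from \cite{cebron+ulrich}. Let $\mathcal{A}$ be the unital $*$-algebra generated by $x_1,\dots,x_n$ and let $\alpha\colon\mathcal{A}\to\mathcal{A}\sqcup\operatorname{Pol}(U_n^{{\rm nc}})$ be the dual group action $\alpha(x_i)=\sum_j x_j\bsqcup u_{ji}$; invariance means $(\varphi\sqcup\tau)\circ\alpha=\varphi$. At the cumulant level this reads
\[
\kappa_k^\varphi\bigl(x_{i_1}^{\epsilon_1},\dots,x_{i_k}^{\epsilon_k}\bigr)=\kappa_k^{\varphi\sqcup\tau}\bigl(\alpha(x_{i_1}^{\epsilon_1}),\dots,\alpha(x_{i_k}^{\epsilon_k})\bigr),
\]
and, since $\mathcal{A}$ and $\operatorname{Pol}(U_n^{{\rm nc}})$ are free inside the free product, the right-hand side expands via \cite{nica+speicher} as
\[
\sum_{j_1,\dots,j_k}\sum_{\pi\in NC(k)}\kappa_\pi^\varphi\bigl(x_{j_1}^{\epsilon_1},\dots,x_{j_k}^{\epsilon_k}\bigr)\,\kappa_{K(\pi)}^\tau\bigl(u_{j_1 i_1}^{\epsilon_1},\dots,u_{j_k i_k}^{\epsilon_k}\bigr),
\]
with $K(\pi)$ the Kreweras complement. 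Both directions will be derived from this bilinear identity together with the known support of the $\tau$-cumulants.

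For direction $2\Rightarrow 1$, I would invoke the fact that under $\tau$ the matrix $u$ is jointly $R$-diagonal, with non-zero free cumulants carried exactly by the two cyclically alternating $*$-sign patterns of part~1, and with adjacent-pair index matching dictated by the unitarity relations $\sum_\ell u_{\ell j}^\ast u_{\ell k}=\delta_{jk}=\sum_\ell u_{j\ell}u_{k\ell}^\ast$. Substituting this into the bilinear identity, the vanishing of the $\tau$-cumulant on all other sign or index patterns forces $\kappa_k^\varphi(x_{i_1}^{\epsilon_1},\dots,x_{i_k}^{\epsilon_k})$ to vanish outside the same two cyclic classes, and the index pairings on the $u$-side transfer, via Kreweras complementation, to the $x$-side and produce exactly the two types of surviving cumulants. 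Varying the outer indices $i_1,\dots,i_r$ and comparing coefficients then shows that these surviving cumulants depend only on the length $2r$.

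Conversely, for $1\Rightarrow 2$, I would substitute the hypothesized cumulant structure into the bilinear identity and verify that the internal index sums collapse via the unitarity-induced delta structure of $\kappa^\tau$, reproducing $\kappa_k^\varphi(x_{i_1}^{\epsilon_1},\dots,x_{i_k}^{\epsilon_k})$ on the right. Since the $x$- and $u$-sides have matching sign supports, only a small number of partitions $\pi\in NC(k)$ contribute and the surviving terms pair off under $\pi\leftrightarrow K(\pi)$. The principal obstacle, in both directions, is the precise bookkeeping of this Kreweras correspondence: the first cyclic type in part~1 is paired with the $u$-cumulants coming from $\sum_\ell u_{\ell j}^\ast u_{\ell k}=\delta_{jk}$, while the cyclically shifted type corresponds to $\sum_\ell u_{j\ell}u_{k\ell}^\ast=\delta_{jk}$; keeping these two strands disentangled, and doing so using only the Haar trace (no Haar state is available on $\operatorname{Pol}(U_n^{{\rm nc}})$), is where the technical heart of the argument lies.
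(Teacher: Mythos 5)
Your overall strategy---expand the action inside the free product, use freeness together with the explicit Haar-trace cumulants of the $u_{ij}$ from \Cref{cor_CU15}, and transfer constraints through the cumulants-of-products machinery---is the same as the paper's. But two steps do not work as written. First, your central bilinear identity with the standard Kreweras complement, $\kappa_k^{\varphi\sqcup\tau}\bigl(\alpha(x_{i_1}^{\epsilon_1}),\dots\bigr)=\sum_{j,\pi}\kappa_\pi^\varphi\bigl(x_{j_1}^{\epsilon_1},\dots\bigr)\kappa_{K(\pi)}^\tau\bigl(u_{j_1i_1}^{\epsilon_1},\dots\bigr)$, is only valid when every argument is of the form (element of the $x$-algebra)(element of the $u$-algebra) in one fixed order; since $\alpha(x_i)^*=\sum_j u_{ji}^*x_j^*$, a mixed $*$-pattern interleaves the two algebras differently in different arguments, and the product formula with $K(\pi)$ on aligned positions fails. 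The paper instead uses the general formula of \Cref{cum_of_prod} (sum over $\pi\in\operatorname{NC}(2k)$ with $\pi\vee\sigma=1_{2k}$, which forces $\{(1,2k),(2,3),\dots\}\preceq\pi$). Moreover, even with a corrected formula, your claim in the direction $2\Rightarrow 1$ that the support of the $\tau$-cumulants ``forces'' the non-alternating $\kappa^\varphi$ to vanish is not a term-by-term consequence: the right-hand side contains terms in which non-alternating $x$-blocks (e.g., singletons $\kappa_1(x_j)=\varphi(x_j)$) are multiplied by \emph{nonzero} alternating $u$-blocks, so the identity is only a recursive constraint. The paper gets the non-alternating vanishing directly from \Cref{cum_alternating} (Nica--Speicher, Proposition~15.8) applied to the product family $u_{ik}t_k$, before any index bookkeeping; this lemma, or an equivalent argument, is missing from your plan and is precisely where the ``bookkeeping'' you defer actually bites.

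Second, you take invariance to mean the scalar identity $(\varphi\sqcup\tau)\circ\alpha=\varphi$, but condition~2 of the theorem is invariance in the sense of \Cref{def-inv-dg-finite}, i.e., the conditional-expectation-valued identity $E_{h_n}^{h_n*\varphi_x}\circ\alpha_n=\varphi_x\,1_{U_n^{\rm nc}}$; the scalar identity is the paper's condition~3 and is a priori weaker. Your plan would give $1\Leftrightarrow 3$, and $2\Rightarrow 3$ is trivial, but the direction $1\Rightarrow 2$ needs an operator-valued statement: the paper computes the $\operatorname{Pol}(U_n^{\rm nc})$-valued cumulants $\kappa_m^{E}$ of the $\alpha_n(t_i^{e_i})$ via \Cref{cor::cum_of_cond} (Nica--Speicher--Shlyakhtenko) and uses the traciality of $h_n$ to show they are scalar multiples of $1_{U_n^{\rm nc}}$. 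Without this extra step your argument establishes equivalence only with the weaker scalar form of invariance.
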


In case the underlying noncommutative probability space in the above theorem is tracial, the above characterization (1) may be replaced by (see \Cref{prop:realization}):
\begin{itemize}\itemsep=0pt
\item[$(1')$] \emph{The family $(x_1, \dots, x_n)$ has the same $*$-distribution as $(u_1x,\dots,u_nx)$ where $(u_1,\dots,u_n)$ is a freely uniform unit vector, $x$ is self-adjoint and $(u_1,\dots,u_n)$ and $x$ are $*$-free.}
\end{itemize}
And in case we are dealing with a tracial $W^*$-probability space such that $\sum_{i=1}^nx_i^*x_i$ has trivial kernel, the above element $x$ is of the form (see \Cref{cor-finite})
\[x=\sqrt{\sum_{i=1}^nx_i^*x_i}.\]

\subsection{Infinite de Finetti theorems for dual group actions}
Secondly, we may pass to \emph{infinite} de Finetti theorems. The above theorem has a direct analog in the infinite situation replacing $(x_1,\dots,x_n)$ by an infinite sequence $(x_i)_{i\in\mathbb N}$, see \Cref{thm-infinitedual}. However, passing to $W^*$-probability spaces, we obtain a characterization of free centered circular elements, just like in the case of the unitary quantum group $U_n^+$, compare with Curran's result~\cite{curran}.

\begin{thm*}[infinite de Finetti theorem for dual group actions on $W^*$-prob.\ spaces, \Cref{theorem:infinite_setting}]
Let $(x_i)_{i\in\mathbb{N}}$ be an infinite sequence of random variables in some $W^*$-probability space $(M,\varphi)$. The following are equivalent:
\begin{enumerate}\itemsep=0pt
\item[$1.$] There exists some $v\in M$ such that
$(x_i)_{i\in\mathbb{N}}$ is a $\mathcal{B}$-valued free centered circular family whose elements have identical variances
\[
\mathcal{B}\ni b\mapsto \mathbb{E}(x_i b x_i^*)=\varphi(x_ibx_i^*)1_M \qquad\text{and}\qquad \mathcal{B}\ni b\mapsto \mathbb{E}(x_i^*bx_i)=\varphi(b)v,
\]
where $\mathbb E$ is the conditional expectation from $M$ to $W^*(v)$.
\item[$2.$] The distribution of $(x_i)_{i \in \mathbb{N}}$ is invariant under the dual group action of $U_n^{{\rm nc}}$.
\end{enumerate}
In this case, the sequence $\big(\frac{1}{n}\sum_{j=1}^nx_j^*x_j\big)_{n\in \mathbb{N}}$ strongly converges to $v$.
\end{thm*}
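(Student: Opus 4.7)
The plan is to bootstrap from the finite de Finetti theorem (\Cref{thm-DeFin1}) combined with a strong convergence argument for $v_n := \tfrac{1}{n}\sum_{j=1}^n x_j^* x_j$. For the easy direction $(1)\Rightarrow(2)$ I would compute the scalar free cumulants of the sequence from the $\mathcal{B}$-valued ones by contracting with $\varphi$ via the nesting formula. Because $\kappa_2^{\mathcal{B}}$ is supported only on the two prescribed positions and because the scalars $\varphi(x_ibx_i^*)$ and $\varphi(b)\varphi(v)$ do not depend on the index $i$, the nonzero scalar cumulants of the sequence land exactly on the two alternating patterns of \Cref{thm-DeFin1}(1) and depend only on the length $2r$; invariance of each initial segment under the dual group action of $U_n^{{\rm nc}}$ then follows from \Cref{thm-DeFin1}.

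For the harder direction $(2)\Rightarrow(1)$ I would first apply \Cref{thm-DeFin1} to every $(x_1,\dots,x_n)$ to propagate the alternating scalar cumulant structure to the whole infinite sequence. Identical distribution of the $x_j^*x_j$, which also follows from invariance, gives uniform norm-boundedness of $(v_n)$. I would then show that $(v_n)$ is Cauchy in $2$-norm by expanding $\varphi\bigl((v_n-v_m)^*(v_n-v_m)\bigr)$ through the moment-cumulant formula: the admissible cumulants from \Cref{thm-DeFin1}, together with a counting estimate on the admissible noncrossing partitions that pair the indices, reduce the expansion to contributions of order $1/\min(n,m)$. Combined with the norm bound, strong convergence to some $v\in M$ follows, and I set $\mathcal{B} := W^*(v)$ with $\mathbb{E}\colon M\to\mathcal{B}$ the $\varphi$-preserving conditional expectation.

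The remaining task is to identify the $\mathcal{B}$-valued cumulant structure. The key observation is that polynomials in $v$ are $2$-norm limits of symmetric averages of products $x_{j_1}^*x_{j_1}\cdots x_{j_k}^*x_{j_k}$; consequently each $\mathcal{B}$-valued cumulant $\kappa_n^{\mathcal{B}}(x_{i_1}^{\epsilon_1}b_1,\dots,x_{i_n}^{\epsilon_n})$ can be computed as a limit of longer scalar cumulants of alternating words, and the support restriction from \Cref{thm-DeFin1} forces vanishing for $n\neq 2$ and matching pairs $(x_i,x_i^*)$ or $(x_i^*,x_i)$ in the remaining case. Reading off the scalar-valued versus $v$-valued output of the two alternating types then yields $\mathbb{E}(x_ibx_i^*)=\varphi(x_ibx_i^*)1_M$ (no interior copy of $\sum x_j^*x_j$ survives the limit) and $\mathbb{E}(x_i^*bx_i)=\varphi(b)v$ (exactly one interior copy does). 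I expect the main obstacle to be precisely this final lifting step: transferring the scalar cumulant structure to the operator-valued level without assuming traciality of $\varphi$, since the usual Speicher/Curran-style tail-algebra arguments invoke a tracial Banach limit, and one must instead exploit the specific form of the dual group action of $U_n^{{\rm nc}}$ to compensate.
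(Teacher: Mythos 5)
Your overall strategy (reduce to \Cref{thm-DeFin1}, prove convergence of $v_n=\frac1n\sum_{j\le n}x_j^*x_j$, then read off an operator-valued circular structure over $W^*(v)$) is reasonable, but the step you yourself flag as ``the main obstacle'' is exactly the step that is missing, and it is the heart of the matter. In a non-tracial $W^*$-probability space there is no automatic $\varphi$-preserving conditional expectation $\mathbb{E}\colon M\to W^*(v)$: by Takesaki's theorem such a map exists only if $W^*(v)$ is globally invariant under the modular automorphism group of $\varphi$, and nothing in your argument verifies this. So you cannot simply ``set $\mathcal{B}:=W^*(v)$ with $\mathbb{E}$ the $\varphi$-preserving conditional expectation''. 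The paper's proof constructs the conditional expectation from the action itself: invariance lets one extend $\tilde\beta_n$ to a state-preserving homomorphism (\Cref{beta_invariance}), the maps $E_n=E^{h_n*\varphi}_\varphi\circ\tilde\beta_n$ are shown to be $\varphi$-preserving conditional expectations onto the decreasing algebras $\mathcal{B}_n$ (\Cref{beta_inv}), and Curran's reverse-martingale result (\Cref{conv_curran}) yields both the limiting conditional expectation $E\colon\mathcal{B}_0\to\mathcal{B}_\infty$ and the convergence of its cumulants; the operator-valued cumulants with respect to $E_n$ are then computed exactly (\Cref{cum_aux}, via \Cref{cor::cum_of_cond} and \Cref{cor_CU15}), so that the circular structure with variances $b\mapsto\varphi(x_1bx_1^*)1$ and $b\mapsto\varphi(b)v$ drops out in the limit. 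Your substitute for this -- approximating polynomials in $v$ by symmetric averages of words $x_{j_1}^*x_{j_1}\cdots$ and asserting which ``interior copies survive the limit'' -- is a heuristic, not an argument; without an actual conditional expectation and a cumulant computation with respect to it, condition (1) is not established.

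There is also a smaller gap in your direction $(1)\Rightarrow(2)$. Since $\eta(1)=v$ is not a scalar, the scalar cumulants of $(x_i)$ are \emph{not} obtained by applying $\varphi$ to the $\mathcal{B}$-valued cumulants; they mix in the cumulants of $v$ and the numbers $\varphi(x_1v^kx_1^*)$, and verifying that the resulting scalar cumulants are supported exactly on the two alternating patterns of \Cref{thm-DeFin1} and depend only on the length requires a genuine combinatorial argument (already at length $4$ one has to check cancellations such as $\kappa_4(x_1^*,x_2,x_2^*,x_1)=0$). The paper avoids this entirely by a different device: it shows that the manifestly invariant families $\big(\tilde\beta_n(x_i)\big)_i$ converge in distribution to $(x_i)_i$ (again via \Cref{cum_aux} and convergence of cumulants), so invariance passes to the limit. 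Your Cauchy-in-$\|\cdot\|_\varphi$ argument for $v_n$ together with the uniform norm bound is fine in spirit (on bounded sets the $\|\cdot\|_\varphi$-topology agrees with the strong topology for a faithful normal state), but note that in the paper this convergence comes for free from $E_n[x_1^*x_1]=\frac1n\sum_{j\le n}x_j^*x_j$ and \Cref{conv_curran}, i.e., from the same machinery you would in any case need for the conditional expectation. As it stands, the proposal does not prove the implication $(2)\Rightarrow(1)$.
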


In case the underlying $W^*$-probability space is \emph{tracial}, the above characterization (1) may be replaced by (see \Cref{prop:realization_infinite}):
\begin{itemize}\itemsep=0pt
\item[$(1')$] \emph{The sequence $(x_i)_{i \in \mathbb{N}}$ has the same $*$-distribution as $(c_ix)_{i \in \mathbb{N}}$ where $(c_i)_{i \in \mathbb{N}}$ is a sequence of free circular variables, $x$ is self-adjoint and $(c_i)_{i \in \mathbb{N}}$ and $x$ are $*$-free.}
\end{itemize}
See also the version \Cref{cor-infinite} of \Cref{prop:realization_infinite}.

\subsection{Infinite de Finetti theorems for bialgebra actions}
Thirdly, instead of considering actions going into the free product of algebras (dual group actions), we may also consider actions going to the tensor product of algebras (bialgebra actions). In a way, these bialgebra actions are closer to the actions of quantum groups such as $U_n^+$; on the other hand, they are less ``natural'' from the perspective of dual groups. Surprisingly, we have a no-go theorem for bialgebra actions in case we consider $W^*$-probability spaces with faithful states.

\begin{thm*}[no-go de Finetti theorem for bialgebra actions, \Cref{thm-bialg-DeFin}]
The joint $*$-dis\-tri\-bu\-tion of an infinite sequence $(x_i)_{i\in\mathbb{N}}$ of random variables in some $W^*$-probability space is invariant under the $*$-bialgebraic action of $U^{\rm nc}$ if and only if $x_i=0$ for all $i\in\mathbb{N}$.
\end{thm*}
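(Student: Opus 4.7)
The plan is to exploit that $\operatorname{Pol}(U^{\rm nc}_n)$ satisfies only $uu^* = u^*u = 1$ and \emph{not} the biunitarity relations $(u^t)^*u^t = u^t(u^t)^* = 1$ of $\operatorname{Pol}(U^+_n)$. The bialgebraic invariance I would use reads
\[
(\operatorname{id}\otimes\varphi)\alpha(p) \;=\; \varphi(p)\cdot 1 \qquad \text{in } \operatorname{Pol}(U^{\rm nc}_n),
\]
valid for every $*$-polynomial $p$ in the $x_i$, where $\alpha(x_j) = \sum_k u_{jk}\otimes x_k$. The ``if'' direction is trivial, so I focus on the converse. The first move is to apply the condition to $p = x_1^* x_1$ at $n = 2$, which gives
\[
\sum_{k,l=1}^{2} u_{1k}^* u_{1l}\,\varphi(x_k^* x_l) \;=\; \varphi(x_1^* x_1)\cdot 1 \qquad \text{in } \operatorname{Pol}(U_2^{\rm nc}),
\]
and then evaluate this identity in a carefully chosen $*$-representation of the Brown algebra.

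The decisive step is to pick an explicit $*$-representation $\pi\colon \operatorname{Pol}(U_2^{\rm nc}) \to B(\ell^2(\mathbb{N}))$. Let $S$ denote the unilateral shift on $\ell^2(\mathbb{N})$ and $P_0 = 1 - SS^*$ the rank-one projection onto the first basis vector, and put
\[
\pi(u) \;=\; \begin{pmatrix} S & P_0 \\ 0 & S^* \end{pmatrix}.
\]
The standard identities $S^*S = 1$, $SS^* = 1 - P_0$, $P_0 S = S^* P_0 = 0$ and $P_0^2 = P_0$ immediately give $\pi(u)\pi(u)^* = \pi(u)^*\pi(u) = I$, so $\pi$ is a well-defined $*$-homomorphism. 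Applying $\pi$ to the above equation and using these same identities, only the $(k,l) = (1,1)$ and $(k,l) = (2,2)$ terms survive, and the left-hand side collapses to $\varphi(x_1^* x_1)\cdot 1 + \varphi(x_2^* x_2)\cdot P_0$, while the right-hand side is $\varphi(x_1^* x_1)\cdot 1$. Since $P_0 \neq 0$, this forces $\varphi(x_2^* x_2) = 0$, and faithfulness of $\varphi$ then gives $x_2 = 0$. Classical $S_\infty$-exchangeability, which follows from the bialgebra quotient $\operatorname{Pol}(U_n^{\rm nc}) \to C(S_n)$ induced by specializing $u$ to a permutation matrix, upgrades this to $\varphi(x_i^* x_i) = 0$, and hence $x_i = 0$, for every $i \in \mathbb{N}$.

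The main obstacle I anticipate is producing the representation $\pi$: one has to recognize that, precisely because the Brown algebra lacks the relation $(u^t)^* u^t = 1$, one is free to insert a proper co-isometry and a non-zero projection into the generators while keeping $\pi(u)$ unitary. This is what creates the ``extra'' $P_0$-term that eliminates $\varphi(x_2^* x_2)$. For $\operatorname{Pol}(U^+_n)$ the biunitarity relation forces $\sum_k u_{1k}^* u_{1k} = 1$ and the analogous identity degenerates into a tautology, consistent with Curran's non-trivial de Finetti theorem for $U_n^+$; here the looseness of $U^{\rm nc}$ turns a would-be de Finetti theorem into a no-go.
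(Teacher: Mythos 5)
Your proof is correct: the block matrix $\left(\begin{smallmatrix} S & P_0 \\ 0 & S^*\end{smallmatrix}\right)$ is indeed unitary (using $S^*S=1$, $SS^*+P_0=1$, $P_0S=S^*P_0=0$), so $u_{11}\mapsto S$, $u_{12}\mapsto P_0$, $u_{21}\mapsto 0$, $u_{22}\mapsto S^*$ defines a $*$-representation of $\operatorname{Pol}(U_2^{\rm nc})$; evaluating the invariance identity for $x_1^*x_1$ in it yields $\varphi(x_1^*x_1)1+\varphi(x_2^*x_2)P_0=\varphi(x_1^*x_1)1$, hence $\varphi(x_2^*x_2)=0$, and the passage to all $i$ via classical exchangeability (characters of $\operatorname{Pol}(U_n^{\rm nc})$ given by permutation matrices) is legitimate. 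The strategy is the same as in the paper — exploit a $*$-representation of the Brown algebra that is unitary but violates the extra relations $u^t\overline{u}=\overline{u}u^t=1$ of $\operatorname{Pol}(U_n^+)$, and feed it the invariance condition for second-order moments — but your concrete representation differs: the paper uses the finite-dimensional representation $\pi_n(u_{jk})=e_{kj}$ on $M_n(\mathbb{C})$ (Lemma \ref{lem-fd-rep}), applied to all products $x_j^*x_k$, which gives $\varphi(x_j^*x_k)1=e_{jk}\sum_{i=1}^n\varphi(x_i^*x_i)$ and hence kills all $\varphi(x_i^*x_i)$ at once, with no need for the exchangeability step. Your shift-based representation is infinite-dimensional and requires the extra (easy) $S_\infty$-exchangeability argument to propagate from $x_2$ to all $x_i$, but it isolates very transparently the mechanism at work — the freedom to place a proper isometry and a nonzero projection in a row of $u$ — whereas the paper's matrix-unit choice is more economical and uniform in $n$.
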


However, if we omit the assumption on the state being faithful, we \emph{do} obtain some de Finetti theorem, at least ``half'' of it, characterizing only one direction.

\begin{proposition*}[half a de Finetti theorem for bialgebra actions, \Cref{prop-bialg-DeFin}]
Let $(x_i)_{i\in\mathbb{N}}$ be an infinite sequence in a $W^*$-probability space $(M,\psi)$ such that $\psi$ is not necessarily faithful. Suppose, there is a $W^*$-subalgebra $\mathbf{1}\in B\subseteq M$ and a conditional expectation $E\colon M\to B$ such that $(x_i)_{i\in\mathbb{N}}$ is a $B$-valued free centered circular family whose elements have identical variances
\[
B\ni b\mapsto \theta(b)=E(x_ibx_i^*)\in B \qquad\text{and}\qquad B\ni b\mapsto \eta(b)=E(x_i^*bx_i)=0
\]
for all $i\in\mathbb{N}$. Then the joint distribution of $(x_i)_{i\in\mathbb{N}}$ is invariant under the $*$-bialgebraic action of $U^{\rm nc}$.
\end{proposition*}

\section{Preliminaries}\label{sec-prel}

\subsection{Partitions of sets}
For any integer $k \geq 1$, let us denote the set $\{1, 2, \dots, k\}$ by $[k]$.

We recall that $\pi = \{V_1, V_2, \dots, V_r\}$ is a \emph{partition} of $[k]$ if and only if the \emph{blocks} $V_i$'s are pairwise disjoint (non empty) subsets of $[k]$ such that $V_1 \cup V_2 \cup \dots \cup V_r = [k]$. Moreover, the partition $\pi$ is called \emph{noncrossing} if for any two distinct blocks $V$ and $W$ of $\pi$ one cannot find four points $1 \leq p < q < r < s \leq k$ such that $\{p,r\} \subset V$ and $\{q,s\} \subset W$.
The set of all noncrossing partitions of $[k]$ is denoted by $\operatorname{NC}(k)$. This is a partially ordered set with the reversed refinement order.

\begin{figure}[h]\centering
\begin{tikzpicture}[scale=0.5]
 \coordinate[label=above:{$1$}] (A1) at (0,0);
 \coordinate[label=above:{$2$}] (A1) at (1,0);
 \coordinate[label=above:{$3$}] (A1) at (2,0);
 \coordinate[label=above:{$4$}] (A1) at (3,0);
	\coordinate[label=above:{$5$}] (A1) at (4,0);
	\draw[blue] (0,0) -- (0,-2) -- (4,-2) -- (4,0);
 \draw[red] (1,0) -- (1,-1) -- (3,-1) -- (3,0);
 \draw[blue] (2,0) -- (2,-2);

	\coordinate[label=above:{$1$}] (A1) at (10,0);
 \coordinate[label=above:{$2$}] (A1) at (11,0);
 \coordinate[label=above:{$3$}] (A1) at (12,0);
 \coordinate[label=above:{$4$}] (A1) at (13,0);
	\coordinate[label=above:{$5$}] (A1) at (14,0);
	\draw (10,0) -- (10,-2) -- (14,-2) -- (14,0);
 \draw (12,0) -- (12,-1) -- (13,-1) -- (13,0);
 \draw (11,0) -- (11,-1);
\end{tikzpicture}
\caption{A crossing partition on the left and a noncrossing one on the right.}
\end{figure}
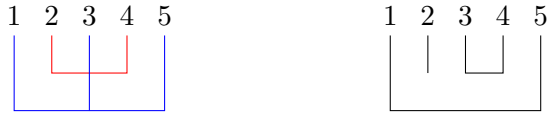

\begin{Definition}[{\cite[Definition~9.14]{nica+speicher}}]
Let $\pi, \sigma \in \operatorname{NC}(k)$ be two noncrossing partitions, we write $\pi \preceq \sigma$ if each block of $\pi$ is contained in one of the blocks of $\sigma$.
\end{Definition}

This partial order induces a lattice structure on $\operatorname{NC}(k)$. The maximal element of $\operatorname{NC}(k)$ with respect to this partial order is the partition consisting of only one block, denoted by $1_k$, and the minimal element is the partition with $k$ blocks, denoted by $0_k$.

\begin{Definition}[{\cite[Definition~9.15]{nica+speicher}}]
The \emph{join} of two partitions $\pi$ and $\sigma$, denoted by $\pi \vee \sigma$, is the minimal element $\tau$ in $\operatorname{NC}(k)$ such that $\pi \preceq \tau$ and $\sigma \preceq \tau$.

The \emph{meet} of two partitions $\pi$ and $\sigma$, denoted by $\pi \wedge \sigma$, is the maximal element $\tau$ in $\operatorname{NC}(k)$ such that $\tau \preceq \pi$ and $\tau \preceq \sigma$.
\end{Definition}

\subsection{Noncommutative probability spaces, cumulants and freeness}

\begin{Definition}[{\cite[Definition~1.12]{mingo+speicher}}]
A \emph{noncommutative probability space} $(A, \varphi)$ consists of a~unital $*$-algebra $A$ and a~state $\varphi \colon A \to \mathbb{C}$, i.e., a unital positive linear functional. An element $a \in A$ is called a \emph{noncommutative random variable}.

If $A$ is a von Neumann algebra and $\varphi$ is a faithful normal state, then $(A, \varphi)$ is called a~\emph{$W^*$-probability space}.
\end{Definition}

Note that we do not assume that $\varphi$ is a trace on $A$.

\begin{Definition}[{\cite[Definition~4.8]{nica+speicher}}]\label{def-jointdistribution}
Let $(A, \varphi)$ be a noncommutative probability space, and $(x_i)_{i \in \mathbb{N}}$ be an infinite sequence of noncommutative random variables in $(A, \varphi)$. Let $\mathcal{Q}_n = \mathbb{C}\langle t_1, t_1^*, \dots, t_n, t_n^*\rangle$ denote the unital $*$-algebra of noncommutative polynomials in $n$ variables with complex coefficients. Then
\begin{align*}
\varphi_x = \varphi_{(x_1, \dots, x_n)} \colon \ \mathcal{Q}_n &\to \mathbb{C},\\
p &\mapsto \varphi\left(p(x)\right)
\end{align*}
is the \emph{joint $*$-distribution} of $x=(x_1,\dots ,x_n)$, where $p \mapsto p(x)$ is the canonical morphism from~$\mathcal{Q}_n$ to~$A$.
\end{Definition}

\begin{Definition}[{\cite[Definition~5.3]{nica+speicher}}]
Let $(A, \varphi)$ be a noncommutative probability space and let $I$ be a fixed index set. Let, for each $i \in I$, $A_i \subset A$ be a unital subalgebra. The subalgebras $(A_i)_{i \in I}$ are called \emph{freely independent} if
\[\varphi(a_1 a_2 \dots a_k) = 0,\]
whenever we have the following:
\begin{itemize}\itemsep=0pt
 \item $k$ is a positive integer,
 \item $a_j \in A_{i(j)}$ for all $1 \leq j \leq k$,
 \item $\varphi(a_j) = 0$ for all $1 \leq j \leq k$,
 \item neighboring elements are from different subalgebras, i.e.,
 \[i(1) \neq i(2) \neq \cdots \neq i(k-1) \neq i(k) .\]
\end{itemize}
If the unital $*$-subalgebras $A_i$ generated by the random variable $a_i$ are freely independent, then we call $(a_i)_{i \in I}$ \emph{$*$-freely independent}, or \emph{$*$-free}.
\end{Definition}

\begin{Definition}[{\cite[Definition~11.3]{nica+speicher}}]
Let $(A, \varphi)$ be a noncommutative probability space. The corresponding \emph{free cumulants} $(\kappa_\pi)_{\pi \in \operatorname{NC}}$ are, for each $n \in \mathbb{N}$, $\pi \in \operatorname{NC}(n)$, multilinear functionals
\begin{align*}
 \kappa_\pi \colon \ A^n & \to \mathbb{C}, \\
 (a_1, \dots, a_n) & \mapsto \kappa_\pi [a_1, \dots, a_n],
\end{align*}
which are defined as follows
\[\kappa_\pi[a_1, \dots, a_n] := \sum_{\substack{\sigma \in \operatorname{NC}(n)\\ \sigma \preceq \pi}} \varphi_\sigma [a_1, \dots, a_n] \mu(\sigma, \pi),\]
where $\mu$ is the Möbius function on the lattice $\operatorname{NC}(n)$ and
\[\varphi_\sigma[a_1, \dots, a_n] = \prod\limits_{V \in \sigma} \varphi_V[a_1, \dots, a_n] := \prod\limits_{\substack{V \in \sigma\\V = \{v_1 < \dots < v_l\}}} \varphi(a_{v_1} \cdots a_{v_l}) .\]
We denote $\kappa_{1_n}$ by $\kappa_n$.
\end{Definition}

\begin{Proposition}[{\cite[Definition~11.4]{nica+speicher}}]
The free cumulants are also determined by the moment cumulant formula:
\[\varphi(a_1 \cdots a_n) = \sum_{\pi \in \operatorname{NC}(n)} \kappa_\pi[a_1, \dots, a_n] .\]
\end{Proposition}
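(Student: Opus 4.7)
The plan is to verify invariance of the joint $*$-distribution directly at the level of moments, using the operator-valued moment--cumulant formula together with the unitarity relations defining $\operatorname{Pol}(U^{\mathrm{nc}})$. Writing the bialgebraic action as $\alpha(x_i)=\sum_j x_j\otimes u_{ij}$ (and accordingly $\alpha(x_i^*)=\sum_j x_j^*\otimes u_{ij}^*$), invariance of the joint distribution is equivalent to
\[
\sum_{j_1,\dots,j_N}\psi\bigl(x_{j_1}^{\varepsilon_1}\cdots x_{j_N}^{\varepsilon_N}\bigr)\,u_{i_1 j_1}^{\varepsilon_1}\cdots u_{i_N j_N}^{\varepsilon_N}=\psi\bigl(x_{i_1}^{\varepsilon_1}\cdots x_{i_N}^{\varepsilon_N}\bigr)\cdot 1
\]
for every $N\in\mathbb{N}$, every sign pattern $\varepsilon\in\{1,*\}^N$ and every index tuple $(i_1,\dots,i_N)$. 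The goal is thus reduced to identifying the two sides as elements of $\operatorname{Pol}(U^{\mathrm{nc}})$.

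To handle the left hand side, I will apply the $B$-valued moment--cumulant formula $\psi(y_1\cdots y_N)=\sum_{\pi\in\operatorname{NC}(N)}\psi\bigl(\kappa^B_\pi[y_1,\dots,y_N]\bigr)$ to the inner moment. By hypothesis $(x_i)$ is a $B$-valued free centered circular family, so only second-order cumulants survive; circularity kills $\kappa_2^B(x_i,x_j)$ and $\kappa_2^B(x_i^*,x_j^*)$; freeness forces $j_p=j_q$ in any contributing pair; and the assumption $\eta=0$ additionally annihilates $\kappa_2^B(x_i^*,x_j)$. Consequently, only noncrossing pair partitions $\pi$ whose pairs $\{p<q\}$ satisfy $\varepsilon_p=1$, $\varepsilon_q=*$ and $j_p=j_q$ contribute. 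Because all $x_i$ share the same variance $\theta$, the nested evaluation factors as $\kappa^B_\pi[x_{j_1}^{\varepsilon_1},\dots,x_{j_N}^{\varepsilon_N}]=\Theta_\pi\cdot\prod_{\{p,q\}\in\pi}\delta_{j_p,j_q}$, where $\Theta_\pi\in B$ is an iterated application of $\theta$ to $1_B$ determined purely by the nesting of $\pi$.

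Substituting and exchanging the sums turns the left hand side into $\sum_{\pi}\psi(\Theta_\pi)\,S_\pi(i)$, with $S_\pi(i):=\sum_{j\text{ respects }\pi}u_{i_1 j_1}^{\varepsilon_1}\cdots u_{i_N j_N}^{\varepsilon_N}$. The key step is to show by induction on $N$ that $S_\pi(i)=\prod_{\{p,q\}\in\pi}\delta_{i_p,i_q}\cdot 1$. Pick an innermost pair $\{p,p+1\}$ of $\pi$; the sign constraint forces $\varepsilon_p=1$, $\varepsilon_{p+1}=*$, and the partial sum over $l=j_p=j_{p+1}$ factors out as $\sum_l u_{i_p l}u_{i_{p+1} l}^*=(uu^*)_{i_p i_{p+1}}=\delta_{i_p,i_{p+1}}\cdot 1$ by the row-orthogonality relation $uu^*=1$ in the Brown algebra. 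Being a scalar, this factor commutes past the remaining non-commuting $u_{kl}$'s, leaving precisely the sum attached to $\pi$ with $\{p,p+1\}$ removed; the induction closes. Reassembling,
\[
\sum_\pi\psi(\Theta_\pi)\prod_{\{p,q\}\in\pi}\delta_{i_p,i_q}\cdot 1=\psi\bigl(E\bigl(x_{i_1}^{\varepsilon_1}\cdots x_{i_N}^{\varepsilon_N}\bigr)\bigr)\cdot 1=\psi\bigl(x_{i_1}^{\varepsilon_1}\cdots x_{i_N}^{\varepsilon_N}\bigr)\cdot 1,
\]
which is exactly the right hand side.

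The main conceptual obstacle, and the reason the statement is only ``half,'' is the asymmetry of the relations of $\operatorname{Pol}(U^{\mathrm{nc}})$: only $u^*u=uu^*=1$ are imposed, while $u^t(u^t)^*=1$ is absent (adjoining it would give $\operatorname{Pol}(U_n^+)$). The hypothesis $\eta=0$ is precisely what restricts the contributing pair partitions to the ``opening $x$, closing $x^*$'' pattern, so that only sums of the form $\sum_l u_{i_p l}u_{i_q l}^*$ appear, all of which collapse via $uu^*=1$. Relaxing $\eta$ would force sums $\sum_l u_{i_p l}^* u_{i_q l}$ to appear, which cannot be identified with a scalar in the Brown algebra; this is exactly what blocks an analogous computation with $\eta\neq 0$ and, in parallel, what prevents the converse implication from being proved without further hypotheses---the non-faithfulness of $\psi$ then leaving just enough room for the ``half'' direction to be the correct form of the statement.
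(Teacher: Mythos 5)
Your proposal does not address the statement you were asked to prove. The statement is the scalar-valued moment--cumulant formula
\[
\varphi(a_1\cdots a_n)=\sum_{\pi\in\operatorname{NC}(n)}\kappa_\pi[a_1,\dots,a_n],
\]
a purely combinatorial fact about an arbitrary noncommutative probability space $(A,\varphi)$, quoted in the paper from Nica--Speicher. What you have written is instead an argument for the ``half a de Finetti theorem for bialgebra actions'' (\Cref{prop-bialg-DeFin}): you introduce a coaction of $\operatorname{Pol}(U^{\rm nc})$, a conditional expectation $E$, operator-valued circular families with $\eta=0$, and the unitarity relations $uu^*=u^*u=1$. None of these objects appear in, or are relevant to, the proposition at hand; your computation even \emph{uses} the ($B$-valued) moment--cumulant formula as an ingredient, so it cannot serve as a proof of it.

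The actual content of the proposition is much simpler. The paper defines
\[
\kappa_\pi[a_1,\dots,a_n]:=\sum_{\substack{\sigma\in\operatorname{NC}(n)\\ \sigma\preceq\pi}}\varphi_\sigma[a_1,\dots,a_n]\,\mu(\sigma,\pi),
\]
and the moment--cumulant formula is exactly the M\"obius inversion of this relation on the lattice $\operatorname{NC}(n)$: one shows $\varphi_\pi[a_1,\dots,a_n]=\sum_{\sigma\preceq\pi}\kappa_\sigma[a_1,\dots,a_n]$ for all $\pi$ (by the defining property $\sum_{\sigma\preceq\tau\preceq\pi}\mu(\tau,\pi)=\delta_{\sigma,\pi}$ of the M\"obius function) and then specializes to $\pi=1_n$, where $\varphi_{1_n}[a_1,\dots,a_n]=\varphi(a_1\cdots a_n)$. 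That two-line inversion argument is what a proof of this statement should contain; the de Finetti machinery in your write-up belongs to a different result in the paper.
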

It is possible to compute the free cumulants of products, thanks to the following formula.
\begin{Theorem}[{\cite[Theorem~11.12 and proof of Proposition~11.25]{nica+speicher}}]\label{cum_of_prod}
For all $a_1,\dots, a_{2m}\in A$, we have
\[
\kappa_n(a_1a_2, \dots, a_{2n-1}a_{2n})=\sum_{\substack{\pi\in \operatorname{NC}(2n)\\ \pi\vee\sigma=1_{2n}}}\kappa_\pi[a_1, \dots, a_{2n}] ,
\]
with $\sigma=\{(1,2),\dots,(2n-1,2n)\}$. Moreover, a partition $\pi\in \operatorname{NC}(2n)$ such that $\pi\vee\sigma=1_{2n}$ has the following property:
\[\{(1,2n),(2,3),\dots,(2n-2,2n-1)\} \preceq \pi.\]
\end{Theorem}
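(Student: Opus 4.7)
The plan is to treat the two assertions separately. For the cumulant-of-products identity, the approach is M\"obius inversion in the lattice $\operatorname{NC}$. For each $\tau \in \operatorname{NC}(n)$, I would introduce the ``doubled'' partition $\tilde\tau \in \operatorname{NC}(2n)$ obtained by replacing every point $i \in [n]$ by the block $\{2i-1, 2i\}$; this is noncrossing because $\tau$ is, and by construction $\sigma \preceq \tilde\tau$. The key combinatorial observation is that any $\pi \in \operatorname{NC}(2n)$ with $\sigma \preceq \pi$ is of the form $\tilde\rho$ for a unique $\rho \in \operatorname{NC}(n)$; in particular the join $\pi \vee \sigma$ is always of this form, so classifying $\pi$'s by their join with $\sigma$ is the same as classifying them by an element of $\operatorname{NC}(n)$.

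Starting from the multiplicative identity
\[\varphi_\tau(a_1 a_2, \dots, a_{2n-1} a_{2n}) = \varphi_{\tilde\tau}[a_1, \dots, a_{2n}] = \sum_{\pi \preceq \tilde\tau} \kappa_\pi[a_1, \dots, a_{2n}],\]
I would stratify the right-hand sum by the value of $\pi \vee \sigma$, which ranges over $\{\tilde\rho : \rho \preceq \tau\}$. Setting
\[f(\rho) := \sum_{\substack{\pi \in \operatorname{NC}(2n) \\ \pi \vee \sigma = \tilde\rho}} \kappa_\pi[a_1, \dots, a_{2n}],\]
one obtains $\varphi_\tau(a_1 a_2, \dots, a_{2n-1} a_{2n}) = \sum_{\rho \preceq \tau} f(\rho)$. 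Comparing with the defining relation $\kappa_\tau = \sum_{\rho \preceq \tau} \varphi_\rho \, \mu(\rho, \tau)$ of the multiplicatively extended cumulants and applying M\"obius inversion on $\operatorname{NC}(n)$ yields $\kappa_\tau(a_1 a_2, \dots, a_{2n-1} a_{2n}) = f(\tau)$; specializing to $\tau = 1_n$, where $\tilde\tau = 1_{2n}$, produces the stated identity. This step is essentially a diagram chase once the doubling bijection $\rho \leftrightarrow \tilde\rho$ is set up carefully.

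For the structural assertion, I would proceed by direct noncrossing analysis on the circular arrangement of $[2n]$. Fix $\pi \in \operatorname{NC}(2n)$ with $\pi \vee \sigma = 1_{2n}$ and consider one of the ``outer'' pairs $(2i, 2i+1)$, treating the pair $(1,2n)$ via the cyclic wrap-around. The plan is a contradiction argument: if $2i$ and $2i+1$ were in distinct blocks of $\pi$, then the noncrossing property of $\pi$ separates the two blocks on opposite sides of the cut between $2i$ and $2i+1$, and the generators of $\sigma$ (which only pair $2j-1$ with $2j$) never bridge this cut, preventing $\pi \vee \sigma$ from reaching $1_{2n}$. The main obstacle is precisely this step: a naive separation fails because a block of $\pi$ can span other points and still be noncrossing, so indirect chains of alternating $\pi$- and $\sigma$-blocks must be excluded. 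A clean way to do this is by induction on the number of blocks of $\pi$, peeling off innermost blocks and reducing to a smaller copy of the same problem on $\operatorname{NC}(2n')$, using that the property ``$\pi\vee\sigma=1_{2n}$'' descends to the contracted partition under any such reduction.
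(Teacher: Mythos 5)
Your treatment of the product formula is correct. The identification of the interval $[\sigma,1_{2n}]$ inside $\operatorname{NC}(2n)$ with $\operatorname{NC}(n)$ via the doubling map $\rho\mapsto\tilde\rho$, the identity $\varphi_\tau(a_1a_2,\dots,a_{2n-1}a_{2n})=\varphi_{\tilde\tau}[a_1,\dots,a_{2n}]$, the stratification of $\{\pi\preceq\tilde\tau\}$ according to the value of $\pi\vee\sigma$, and M\"obius inversion on $\operatorname{NC}(n)$ give a complete argument; the paper does not prove this part itself but quotes it from Nica--Speicher, and your route is essentially the one behind their Theorem~11.12.

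The second assertion is where there is a genuine gap, and it is not one your induction can close: for arbitrary $\pi\in\operatorname{NC}(2n)$ with $\pi\vee\sigma=1_{2n}$ the claim is false. For $n=2$ take $\pi=\{\{1,3\},\{2\},\{4\}\}$: it is noncrossing and $\pi\vee\sigma=1_4$, since the block $\{1,3\}$ together with the $\sigma$-pairs $\{1,2\}$ and $\{3,4\}$ connects all four points, yet $\{2,3\}$ is contained in no block of $\pi$; already $n=1$ with $\pi=\{\{1\},\{2\}\}$ fails, and the corresponding term $\kappa_1(a_1)\kappa_1(a_2)$ genuinely occurs in $\varphi(a_1a_2)=\kappa_2(a_1,a_2)+\kappa_1(a_1)\kappa_1(a_2)$. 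This is exactly the failure mode you flagged as the main obstacle: $\sigma$ can bridge the cut between $2i$ and $2i+1$ indirectly, through blocks of $\pi$ (here a singleton) that do not contain that pair, so the separation step in your contradiction argument is false, and peeling off innermost blocks cannot rescue a false statement. What is true is the assertion under an additional hypothesis on $\pi$, for instance that every block of $\pi$ has even cardinality (as happens when only alternating cumulants survive, e.g.\ for $R$-diagonal or semicircular arguments, which is the setting of the cited proof of Proposition~11.25 and of the places where this property is invoked later in the paper); in the pair-partition case it even pins $\pi$ down uniquely as $\{\{1,2n\},\{2,3\},\dots,\{2n-2,2n-1\}\}$. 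Under that hypothesis your cut idea does work: by noncrossingness the points strictly between two consecutive elements of a block form a union of complete blocks, so even blocks force consecutive elements of every block to alternate in parity; hence the block $V$ containing $1$ has even maximum $v$, no $\sigma$-pair straddles the cut between $v$ and $v+1$, and no other block can straddle it without crossing $V$, so $\pi\vee\sigma=1_{2n}$ forces $2n\in V$, and rotating by two positions (which preserves $\sigma$, noncrossingness and the even-block condition) yields the remaining pairs. So keep your first part as it stands, but restate the structural assertion with the even-block (or pair-partition) hypothesis and prove that version.
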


\subsection[R-diagonal elements]{$\boldsymbol{R}$-diagonal elements}

\begin{notation}
A tuple $(x_1,\dots ,x_n)$ with entries from a set with two elements $\{a,b\}$ is said to be \emph{alternating $($in $a$ and $b)$}, if $n$ is even and $x_i \neq x_{i+1}$ for all $i=1,\dots ,n-1$.
\end{notation}

\begin{Definition}[{\cite[Definition~15.3]{nica+speicher}}] Let $(A, \varphi)$ be a noncommutative probability space. A~random variable $a \in A$ is called \emph{$R$-diagonal} if for all $n \in \mathbb{N}$ we have that $\kappa_n(a_1, \dots, a_n) =0$ whenever the arguments $a_1, \dots, a_n \in \{a, a^*\}$ are not alternating in $a$ and $a^*$.
\end{Definition}

\begin{Example}
Let us recall that a random variable $c \in A$ is called \emph{circular} when the only non-vanishing cumulants are
\[\kappa_2(c,c^*) = \kappa_2(c^*,c) = 1 .\]
Thus, a circular element is an $R$-diagonal element.
\end{Example}

\begin{Definition}[{\cite[Definition~1.12]{nica+speicher}}]
Let $(A, \varphi)$ be a noncommutative probability space. A~random variable $u \in A$ is called \emph{Haar unitary} if $u$ is a unitary in $A$ and all $*$-moments of the form $\varphi(u^k)$, $k \in \mathbb{Z}$, vanish unless $k = 0$.
\end{Definition}

\begin{Proposition}[{\cite[Proposition~15.1]{nica+speicher}}]
The alternating $*$-cumulants of a Haar unitary $u$ are given by
\[\kappa_{2n}(u, u^*, \dots, u, u^*) = \kappa_{2n}(u^*, u, \dots, u^*, u) = (-1)^{n-1} C_{n-1},\]
where $C_n$ denote the $n$-th Catalan number.
All the other $*$-cumulants of $u$ vanish. Thus any Haar unitary element is an $R$-diagonal element.
\end{Proposition}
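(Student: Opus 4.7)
The plan is to combine a gauge-invariance argument with an induction driven by the moment-cumulant formula.

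\emph{Step 1 (gauge invariance).} For any $z\in\mathbb{C}$ with $|z|=1$ the element $zu$ satisfies $\varphi((zu)^k)=z^k\varphi(u^k)=\delta_{k,0}$ for every $k\in\mathbb{Z}$, so $zu$ is again a Haar unitary with the same joint $*$-distribution as $u$, and hence the same free $*$-cumulants. Multilinearity applied to $(zu)^{\epsilon_j}=z^{\epsilon_j}u^{\epsilon_j}$ (with $\epsilon_j\in\{+1,-1\}$) gives $\kappa_n(u^{\epsilon_1},\ldots,u^{\epsilon_n})=z^{\sum_j\epsilon_j}\,\kappa_n(u^{\epsilon_1},\ldots,u^{\epsilon_n})$ for every $z$ of modulus one, forcing every $*$-cumulant with $\sum_j\epsilon_j\neq 0$ to vanish. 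In particular all odd-length cumulants are zero, and at even length a cumulant can be nonzero only when the numbers of $u$'s and $u^*$'s among its arguments agree; I will call such a sequence (or such a block in a partition) \emph{balanced}.

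\emph{Step 2 (moment-cumulant formula).} Since all powers of $u$ commute, every $*$-monomial collapses to $\varphi(u^{\epsilon_1}\cdots u^{\epsilon_n})=\varphi(u^{\sum_j\epsilon_j})=\delta_{\sum_j\epsilon_j,0}$. Inserting this into the moment-cumulant formula and using Step~1, I obtain for every balanced sign sequence
\[
1 = \sum_{\pi\in\operatorname{NC}(n)} \kappa_\pi\bigl[u^{\epsilon_1},\ldots,u^{\epsilon_n}\bigr],
\]
in which only partitions whose every block is balanced contribute, and blocks of size one vanish since $\kappa_1(u)=\varphi(u)=0=\varphi(u^*)=\kappa_1(u^*)$.

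\emph{Step 3 (induction).} I now induct on $n$, with base case $\kappa_2(u,u^*)=\kappa_2(u^*,u)=\varphi(uu^*)=1$ and $\kappa_2(u,u)=\kappa_2(u^*,u^*)=0$ from Step~1. For the induction step at length $n=2m$, the identity of Step~2 isolates $\kappa_n$ in terms of strictly shorter cumulants. The crucial combinatorial observation is that, in an alternating $\pm$-sequence, the only noncrossing partitions contributing nontrivially are those whose blocks are themselves alternating subsequences: any two consecutive same-label elements of a single block would leave a gap of odd length with unbalanced label sum, which can only accommodate either singletons (killed by $\kappa_1=0$) or an unbalanced block (killed by Step~1). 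Therefore if $(\epsilon_1,\ldots,\epsilon_n)$ is non-alternating but balanced (e.g.\ $u,u,u^*,u^*$) the non-top contributions already sum to $1$ and $\kappa_n$ vanishes, giving the $R$-diagonality; while for the alternating sequence, inserting the inductive value $(-1)^{k-1}C_{k-1}$ for each alternating block of length $2k$ recasts the identity as the Catalan convolution $C_m=\sum_{k=0}^{m-1}C_k C_{m-1-k}$ after sign tracking, producing $\kappa_{2m}(u,u^*,\ldots,u,u^*)=(-1)^{m-1}C_{m-1}$; the case starting with $u^*$ is entirely symmetric.

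\emph{Main obstacle.} The delicate piece is the combinatorial bookkeeping of Step~3, in particular reading the signed sum over non-top noncrossing partitions of an alternating sequence as the classical Catalan recurrence; in the background this rests on Kreweras' evaluation $\mu(\pi,1_n)=\prod_{V\in\pi}(-1)^{|V|-1}C_{|V|-1}$ of the Möbius function on $\operatorname{NC}(n)$. A more conceptual shortcut would be to start from a standard free circular element $c$, read off the cumulants of $u:=c|c|^{-1}$ from the trivial cumulants of $c$ via \Cref{cum_of_prod}, and then verify that this $u$ is Haar unitary; I would use this route only as a consistency check on the formula obtained combinatorially.
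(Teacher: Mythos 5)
The paper does not prove this statement at all -- it is quoted verbatim from Nica--Speicher (their Proposition~15.1) -- so your proposal has to be judged as a self-contained proof, and as such it has a real gap. Your Steps~1 and~2 are fine: the gauge argument correctly kills every unbalanced $*$-cumulant, and since $u^*=u^{-1}$ every $*$-moment is $\delta_{\sum_j\epsilon_j,0}$, giving the identity $1=\sum_{\pi\in\operatorname{NC}(n)}\kappa_\pi$ for balanced words. But Step~3, which carries all the content, consists of two assertions that are never proved. First, for a balanced non-alternating word your ``therefore'' is a non sequitur: the odd-gap/parity observation is formulated for (and only valid for) the alternating word, while for a general word the restriction to partitions with alternating blocks follows simply from the induction hypothesis; what actually has to be shown is the identity
\[
\sum_{\substack{\pi\in\operatorname{NC}(2m),\ \pi\neq 1_{2m}\\ \text{all blocks alternating in the word}}}\ \prod_{V\in\pi}(-1)^{|V|/2-1}C_{|V|/2-1}\;=\;1 ,
\]
and no argument for it is given. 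Second, for the alternating word the sum runs over noncrossing partitions with arbitrarily many alternating blocks, and turning it into the two-term Catalan convolution $C_m=\sum_{k}C_kC_{m-1-k}$ (equivalently the recursion $\alpha_m=-\sum_{k=1}^{m-1}\alpha_k\alpha_{m-k}$, i.e., $A(z)(1+A(z))=z$ for the generating function of $\alpha_m=\kappa_{2m}(u,u^*,\dots)$) is precisely the delicate step; ``after sign tracking'' is not an argument. This is exactly what the textbook proof does carry out, e.g.\ by decomposing according to the block containing the first letter (whose gaps are again balanced alternating intervals, giving the convolution structure), or by applying the product formula of \Cref{cum_of_prod} to $\kappa_m(uu^*,\dots,uu^*)=0$ for $m\ge 2$. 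Until one of these evaluations is done, both the vanishing in the non-alternating balanced case and the value $(-1)^{m-1}C_{m-1}$ are assumed rather than derived.

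Two smaller points. The Möbius formula you invoke in the ``main obstacle'' paragraph is misstated: $\prod_{V\in\pi}(-1)^{|V|-1}C_{|V|-1}$ equals $\mu(0_n,\pi)$, not $\mu(\pi,1_n)$; the latter is $(-1)^{|\pi|-1}C_{|\pi|-1}$ because the interval $[\pi,1_n]$ in $\operatorname{NC}(n)$ is isomorphic to $\operatorname{NC}(|\pi|)$. And the suggested shortcut via the polar decomposition $u=c|c|^{-1}$ of a circular element is not an easier independent route: that the polar part is a Haar unitary $*$-free from $|c|$ is itself a nontrivial theorem, and recovering the cumulants of $u$ from those of $c=u|c|$ still requires inverting the product formula. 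So the skeleton (gauge invariance plus induction through the moment--cumulant formula) is the right one, but the decisive combinatorial evaluation at the core of the proposition is missing.
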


\begin{Lemma}[{\cite[Proposition~15.8]{nica+speicher}}]\label{cum_alternating}
Let $\{a_i\}_{i\in I}$ and $\{b_j\}_{j\in J}$ be $*$-free.
We assume that, for all $ m \geq 1$, $(i_1, \dots, i_m) \in I^m,$ and $\underline{e} = (e_1, \dots, e_m) \in \{\varnothing, *\}^m$ the free cumulant
\[
\kappa_{m}\big(a_{i_1}^{e_1},\dots,a_{i_m}^{e_m}\big)
\]
is vanishing whenever $\underline{e}$ is not alternating.

Then, for all $ m \geq 1$, $(i_1, \dots, i_m) \in I^m$, $(j_1, \dots, j_m) \in J^m$ and $\underline{e} = (e_1, \dots, e_m) \in \{\varnothing, *\}^m$ the free cumulant
\[
\kappa_{m}\big((a_{i_1}b_{j_1})^{e_1},\dots,(a_{i_m}b_{j_m})^{e_m}\big)
\]
is vanishing whenever $\underline{e}$ is not alternating.
\end{Lemma}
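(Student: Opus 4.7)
The plan is to apply the cumulant-of-products formula from \Cref{cum_of_prod} to the cumulant $\kappa_m((a_{i_1}b_{j_1})^{e_1},\dots,(a_{i_m}b_{j_m})^{e_m})$ and then exploit $*$-freeness of $\{a_i\}_{i\in I}$ and $\{b_j\}_{j\in J}$ to kill any contribution that mixes the two families inside a block. Write each factor as a product of two letters, setting
\[
(d_{2k-1},d_{2k})=\begin{cases}(a_{i_k},\,b_{j_k}) & \text{if }e_k=\varnothing,\\[1pt] (b_{j_k}^*,\,a_{i_k}^*) & \text{if }e_k=*,\end{cases}
\]
and, with $\sigma=\{(1,2),(3,4),\dots,(2m-1,2m)\}$, expand
\[
\kappa_m\bigl((a_{i_1}b_{j_1})^{e_1},\dots,(a_{i_m}b_{j_m})^{e_m}\bigr)=\sum_{\substack{\pi\in\operatorname{NC}(2m)\\ \pi\vee\sigma=1_{2m}}}\kappa_\pi[d_1,\dots,d_{2m}].
\]
By $*$-freeness, mixed free cumulants vanish, so only those $\pi$ whose blocks are monochromatic (entirely in $a$-positions or entirely in $b$-positions, in the colouring determined by the above table) can contribute.

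The second assertion of \Cref{cum_of_prod} is the decisive input: every admissible $\pi$ has each of the pairs $(1,2m),(2,3),(4,5),\dots,(2m-2,2m-1)$ sitting inside a single block of $\pi$. In particular these specific two-element sets must themselves be monochromatic. Reading off colours from the table: position $2k$ is $b$-coloured exactly when $e_k=\varnothing$ and $a$-coloured exactly when $e_k=*$, while position $2k+1$ is $a$-coloured exactly when $e_{k+1}=\varnothing$ and $b$-coloured exactly when $e_{k+1}=*$. Hence the pair $(2k,2k+1)$ is monochromatic if and only if $e_k\neq e_{k+1}$, and similarly $(1,2m)$ is monochromatic if and only if $e_1\neq e_m$.

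Putting these constraints together, a nonvanishing cumulant forces $e_k\neq e_{k+1}$ for every $k=1,\dots,m-1$, together with $e_1\neq e_m$; the latter rules out odd $m$, so $m$ must be even and $\underline{e}$ is alternating in the sense of the notation fixed above. Contrapositively, if $\underline{e}$ is not alternating, every $\pi$ in the sum produces a cumulant containing at least one bichromatic block, and hence the full sum vanishes. The step I expect to require the most care is the bookkeeping in the paragraph above: the nontrivial point is that the exponents $e_k$ \emph{swap} the role of $a$ and $b$ inside the $k$-th factor, so ``monochromaticity of the forced pairs'' translates precisely into the alternation condition on $\underline{e}$; once this translation is made, the argument reduces to freeness plus \Cref{cum_of_prod}.
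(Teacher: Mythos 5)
Your reduction via \Cref{cum_of_prod} and the colouring bookkeeping are fine, but the decisive step is not: you apply the ``moreover'' part of \Cref{cum_of_prod} to \emph{every} $\pi$ in the sum, i.e., you claim that $\pi\vee\sigma=1_{2m}$ alone forces $\{(1,2m),(2,3),\dots,(2m-2,2m-1)\}\preceq\pi$. That cannot be used as a blanket property of arbitrary noncrossing partitions: in $\operatorname{NC}(4)$ the partition $\pi=\{\{1,3\},\{2\},\{4\}\}$ satisfies $\pi\vee\sigma=1_4$ but does not contain $\{2,3\}$ in a block (already in $\operatorname{NC}(2)$, $\pi=\{\{1\},\{2\}\}$ has $\pi\vee\sigma=1_2$ without containing $\{1,2\}$). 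Such a containment is only legitimate after the partitions whose block cumulants vanish have been discarded (this is how it is used in the source cited for \Cref{cum_of_prod}, where the surviving partitions are pairings); it is not a feature of the index set of the sum. Concretely, for $m=2$ and $\underline e=(\varnothing,\varnothing)$ the partition $\pi=\{\{1,3\},\{2\},\{4\}\}$ is monochromatic, lies in the sum, and splits the forced pair $\{2,3\}$, so your contradiction ``the forced pair is bichromatic, hence the block is killed by freeness'' never touches it: its contribution is $\kappa_2(a_{i_1},a_{i_2})\,\varphi(b_{j_1})\varphi(b_{j_2})$, which vanishes only because of the hypothesis on the $a$-cumulants.

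That points to the clearest symptom of the gap: your argument never uses the assumption that the non-alternating cumulants of $\{a_i\}_{i\in I}$ vanish, yet the lemma is false without it (already for $m=1$ one has $\kappa_1(a_{i_1}b_{j_1})=\varphi(a_{i_1})\varphi(b_{j_1})$, which need not vanish for non-centered $a$'s). Any proof that does not invoke this hypothesis proves too much. The route the paper actually takes is simply to cite \cite[Proposition~15.8]{nica+speicher} and observe that the argument there carries over verbatim to families: after restricting to monochromatic $\pi$ by freeness, one uses the hypothesis to see that every $a$-block of a potentially contributing $\pi$ must have even size with alternating stars (in particular no singleton $a$-blocks), and only then does the combinatorial analysis show that such a $\pi$ cannot satisfy $\pi\vee\sigma=1_{2m}$ when $\underline e$ is not alternating. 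The containment of the pairs $(2,3),(4,5),\dots,(1,2m)$ in blocks is an output of that finer analysis for the surviving partitions, not an a priori property you may impose on all $\pi$ with $\pi\vee\sigma=1_{2m}$; as written, your proof has a genuine gap at exactly this point.
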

\begin{proof}In \cite{nica+speicher}, only the case of one $a$ and one $b$ is considered. However, the proof works \emph{mutatis mutandis} with families $\{a_i\}_{i\in I}$ and $\{b_j\}_{j\in J}$.
\end{proof}

\subsection{Operator-valued cumulants}\label{sect-opval}
Let us recall the definition and some basic facts about operator-valued cumulants, see \cite[Chapter~9]{mingo+speicher}.

\begin{Definition}[{\cite[Definitions~9.4 and 9.7]{mingo+speicher}}]
A \emph{conditional expectation} $E\colon A\to B$ from a~unital $*$-algebra $A$ onto a $*$-subalgebra $1\in B\subseteq A$ is defined as a unit-preserving linear map which has the bimodule property $E(b_1ab_2) = b_1E(a)b_2$, for all $b_1,b_2\in B$ and $a\in A$. In such a case we say that $(A,E)$ is a \emph{$B$-valued probability space}. The \emph{$B$-valued cumulants} of $E$ are defined implicitly via the formula
\[
E(a_1\cdots a_n) = \sum_{\pi\in \operatorname{NC}(n)} \kappa_{\pi}^E [a_1, \dots , a_n].
\]
Here the arguments of $\kappa_{\pi}^E$ are distributed according to the blocks of $\pi$, and the cumulants inside~$\kappa_{\pi}^E$ are nested according to the nesting of the block of $\pi$, see \cite[Chapter~9]{mingo+speicher} for details and examples. We denote $\kappa^E_{1_n}$ by $\kappa^E_{n}$.
\end{Definition}

Note that the bimodule property for $E$ implies that $\kappa^E_{n}$ is a map on the $B$-module tensor product $A\otimes_{B} A \otimes_{B} \cdots \otimes_{B} A$.

\begin{Example}The cumulants $\big(\kappa^E_{n}\big)_{n\ge 1}$ of a $B$-valued centered circular element $c$ are of the form
\begin{gather*}
\kappa^E_{n}\big(b_0c^{e_1}b_1, c^{e_2}b_2,\dots, c^{e_n}b_n\big) =
\begin{cases}
b_0\eta(b_1)b_2 & \mbox{if }n=2\mbox{ and } (e_1,e_2)=(*,\varnothing), \\
b_0\theta(b_1)b_2 &\mbox{if }n=2\mbox{ and } (e_1,e_2)=(\varnothing,*), \\
0 & \mbox{else}
\end{cases}
\end{gather*}
with $\eta(b)= \kappa_{2}^E(c^*b, c)$ and $\theta(b)=\kappa_{2}^E(cb, c^*)$.
The cumulants $\kappa_{\pi}^E \big[b_0x_1^{e_1}b_1, \dots , x_1^{e_k}b_k\big]$ appearing in the sum above are uniquely determined by $\eta(b)= \kappa_{2}^E(x_1^*b, x_1)$ and $\theta(b)=\kappa_{2}^E(x_1b, x_1^*)$.

More generally, we said that a sequence $(x_i)_{i\in\mathbb{N}}$ is a $B$-valued free circular family with common covariance if their cumulants are of the form
\begin{gather*}
\kappa^E_{n}\big(b_0x_{i(1)}^{e_1}b_1, x_{i(2)}^{e_2}b_2,\dots, x_{i(n)}^{e_n}b_n\big) =
\begin{cases}
b_0\eta(b_1)b_2\! & \text{if} \ n=2, \ i(1)=i(2)\ \text{and} \ (e_1,e_2)=(*,\varnothing), \\
b_0\theta(b_1)b_2\! & \text{if} \ n=2, \ i(1)=i(2) \ \text{and} \ (e_1,e_2)=(\varnothing,*), \\
0 & \text{else},
\end{cases}\!
\end{gather*}
with $\eta(b)= \kappa_{2}^E(x_1^*b, x_1)$ and $\theta(b)=\kappa_{2}^E(x_1b, x_1^*)$.
Note that a sequence $(x_i)_{i\in\mathbb{N}}$ is a $B$-valued free circular family with common covariance if and only if
\[
E\big(b_0x_{i_1}^{e_1}b_1\cdots x_{i_k}^{e_k}\big) =
 \begin{cases}
\displaystyle \sum_{\pi\in \operatorname{NC}^e_2(k), \,\pi\preceq \operatorname{ker} i} \kappa_{\pi}^E \big[b_0x_1^{e_1}b_1, \dots , x_1^{e_k}b_k\big] & \text{if $k$ even}, \\
0 & \text{if $k$ odd},
\end{cases}
\]
where
$
\operatorname{NC}^e_2(k) = \{\pi\in \operatorname{NC}_2(k);\forall \{s,t\}\in\pi, e_s\not=e_t\}
$
and $\operatorname{ker} i$ is the partition obtained by forming blocks consisting in equal indices in $i=(i_1,\dots,i_k)$.
\end{Example}

\subsection{Conditional expectations of free algebras}
\label{sec-ce}

Given two unital $*$-algebras $A$ and $B$, the free product $A\sqcup B$ is the unique unital $*$-algebra with $*$-homomorphisms $i_A\colon A\to A\sqcup B$ and $i_B\colon B\to A\sqcup B$ such that, for all $*$-homomorphisms $f\colon A\to C$ and $g\colon B\to C$, there exists a unique $*$-homomorphism $f\sqcup g\colon A\sqcup B\to C$ such that $f=(f\sqcup g)\circ i_A$ and $g=(f\sqcup g)\circ i_B$.

Let $(A_1, \varphi_1)$ and $(A_2, \varphi_2)$ be two unital $*$-algebras endowed with a state and consider the unital $*$-algebra $A_1 \sqcup A_2$ with the state $\varphi = \varphi_1 \ast \varphi_2$. Following \cite[Exercise~18]{mingo+speicher}, let us define a conditional expectation from $A_1 \sqcup A_2$ to $A_1$. By setting $A^{o}_i=A_i\cap\ker \varphi_i$, we have the decomposition
\[
A_1 \sqcup A_2= \mathbb{C}1\oplus \bigoplus_{n=1}^\infty \bigoplus_{i_1\neq i_2 \neq \cdots \neq i_n \in \{1,2\}} A_{i_1}^o \otimes \dots \otimes A_{i_n}^o.
\]
Let us define the linear map $E^\varphi_{\varphi_1} \colon A_1 \sqcup A_2 \to A_1$ to be the identity on $A_1=\mathbb{C}1\oplus A^{o}_1$ and zero on all remaining summands. Similarly, let us define $E^\varphi_{\varphi_2} \colon A_1 \sqcup A_2 \to A_2$ to be the identity on $A_2=\mathbb{C}1\oplus A^{o}_2$ and zero on all remaining summands.

\begin{Proposition}
The linear maps $E^\varphi_{\varphi_i} \colon A_1 \sqcup A_2 \to A_i$ are two conditional expectations preserving $\varphi$, in the sense that $\varphi \circ E^\varphi_{\varphi_i} =\varphi$.
\end{Proposition}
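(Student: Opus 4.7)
The plan is to verify each defining property of a conditional expectation in turn. Linearity and $E^\varphi_{\varphi_i}(1)=1$ are built in, since $1$ lies in the summand $\mathbb{C}1$ on which the map acts as the identity. For state-preservation $\varphi\circ E^\varphi_{\varphi_i}=\varphi$, I would argue block by block in the free-product decomposition: the defining property of $\varphi=\varphi_1\ast\varphi_2$ makes $\varphi$ vanish on every summand $A_{i_1}^{o}\otimes\cdots\otimes A_{i_n}^{o}$ with $n\ge1$ of alternating factors, while $\varphi|_{A_i}=\varphi_i$. On $\mathbb{C}1\oplus A_i^{o}=A_i$ the map $E^\varphi_{\varphi_i}$ is the identity and the values of $\varphi$ on both sides agree; on every other summand both $E^\varphi_{\varphi_i}(x)$ and $\varphi(x)$ vanish.

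The core content is the bimodule property $E^\varphi_{\varphi_1}(axa')=a\,E^\varphi_{\varphi_1}(x)\,a'$ for $a,a'\in A_1$ (the case of $E^\varphi_{\varphi_2}$ being symmetric). Writing $a=\alpha1+a^{o}$ and $a'=\alpha'1+a'^{o}$ with $a^{o},a'^{o}\in A_1^{o}$, linearity reduces the verification to basis elements $x=x_1\otimes\cdots\otimes x_n$ of the decomposition (including $x=1$) and to $a,a'\in\{1\}\cup A_1^{o}$. The cases $a=1$ or $a'=1$ are immediate; otherwise a short analysis on the first index $i_1$ suffices. When $i_1=2$, the product $a^{o}x$ is already a reduced word beginning in $A_1^{o}$ of length $n+1\ge2$, so both $E^\varphi_{\varphi_1}(a^{o}x)$ and $a^{o}E^\varphi_{\varphi_1}(x)$ vanish. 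When $i_1=1$, decompose the junction $a^{o}x_1\in A_1$ as $\varphi_1(a^{o}x_1)\cdot 1+(a^{o}x_1)^{o}$ with $(a^{o}x_1)^{o}\in A_1^{o}$, giving
\[
a^{o}x=\varphi_1(a^{o}x_1)(x_2\otimes\cdots\otimes x_n)+(a^{o}x_1)^{o}\otimes x_2\otimes\cdots\otimes x_n;
\]
for $n\ge2$ both summands sit in reduced-word subspaces disjoint from $\mathbb{C}1\oplus A_1^{o}$, hence are killed by $E^\varphi_{\varphi_1}$, matching $a^{o}E^\varphi_{\varphi_1}(x)=0$, while for $n=1$ we have $x\in A_1^{o}$ and both sides equal $a^{o}x_1$ by the identity action on $A_1$. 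Right multiplication by $a'^{o}$ is handled by the mirror analysis on $i_n$, and the two combine by linearity.

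The only genuine obstacle is the bookkeeping at the junction where $a^{o}$ and $x_1$ live in the same algebra $A_1$: one must cleanly split $a^{o}x_1$ along $\mathbb{C}1\oplus A_1^{o}$ and re-read the resulting words inside the direct sum decomposition. Once this is done, all cases fall into place and the bimodule identity, together with the already established unit- and state-preservation, shows that $E^\varphi_{\varphi_1}$ (and symmetrically $E^\varphi_{\varphi_2}$) is a conditional expectation satisfying $\varphi\circ E^\varphi_{\varphi_i}=\varphi$.
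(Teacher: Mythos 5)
Your proof is correct and follows essentially the same route as the paper: $\varphi$-preservation is checked summand by summand in the free-product decomposition (using that $\varphi=\varphi_1\ast\varphi_2$ vanishes on alternating words of centered elements and that $E^\varphi_{\varphi_i}$ is the identity on $A_i$), and the bimodule property comes straight from the definition of the map on reduced words. The only difference is that the paper dismisses the bimodule property as ``a direct consequence of the definition,'' whereas you carry out the junction bookkeeping (splitting $a^{o}x_1$ along $\mathbb{C}1\oplus A_1^{o}$) explicitly, which is exactly the detail being suppressed.
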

\begin{proof}
The bimodule property is a direct consequence of the definition. We have $\varphi(a) = \varphi \circ E^\varphi_{\varphi_i}(a)$ if $a\in A_i=\mathbb{C}1\oplus A^{o}_i$ (because $E^\varphi_{\varphi_i}(a)=a$) and we have $\varphi(a)=0=\varphi \circ E^\varphi_{\varphi_i}(a)$ for $a$ in any other of the summands, because of freeness of $A_1$ from $A_2$.
\end{proof}
\begin{Remark}
As in \cite[Theorem~19]{mingo+speicher}, it is possible to prove general formulas for calculating such conditional expectations:
\begin{gather*}
\forall p \geq 1,\quad \forall a_1, \dots, a_p \in A_1,\quad \forall b_1, \dots, b_p \in A_2,\\
E^\varphi_{\varphi_2}[a_1 b_1 \cdots a_p b_p] = \sum_{\pi \in \operatorname{NC}(p)} \kappa_{\pi}^{\varphi_1}[a_1, \dots, a_p] \prod_{\substack{V \in K(\pi)\\ V \neq V_{\rm last}}} (\varphi_2)_V[b_1, \dots, b_p] \prod_{v \in V_{\rm last}}^{_\to} b_v,\\
E^\varphi_{\varphi_1}[b_1 a_1 \cdots b_p a_p] = \sum_{\pi \in \operatorname{NC}(p)} \kappa_{\pi}^{\varphi_2}[b_1, \dots, b_p] \prod_{\substack{V \in K(\pi)\\ V \neq V_{\rm last}}} (\varphi_1)_V[a_1, \dots, a_p] \prod_{v \in V_{\rm last}}^{_\to} a_v,
\end{gather*}
where $K(\pi)$ denotes the Kreweras complement of $\pi$ (see \cite[Definition~9.21]{nica+speicher}), $V_{\rm last}$ is the block of the noncrossing partition $K(\pi)$ containing the uttermost right point and $\prod\limits_{v \in V_{\rm last}}^{_\to}\hspace{-5pt}x_v$ is the noncommutative product where the $v$'s are taken in increasing order.
\end{Remark}

In particular, we have:
\begin{Corollary}
Let $n\in \mathbb{N}$ and $x=(x_1, \dots, x_n)$ be a family of random variables in a noncommutative probability space $(A, \varphi)$. On the free product of the Brown algebra $\operatorname{Pol}(U_n^{\rm nc})$ and $\mathcal Q_n$ we have the conditional expectation $E^{h_n*\varphi_x}_{h_n}$. Thus $E^{h_n*\varphi_x}_{h_n}\circ\alpha_n$ is a map from $\mathcal Q_n$ to $\operatorname{Pol}(U_n^{\rm nc})$.
\end{Corollary}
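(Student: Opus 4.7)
The plan is to view this Corollary as a direct instantiation of the preceding Proposition, so the work is really just identifying the correct data and checking that it satisfies the hypotheses there.

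First, I would set $A_1 := \operatorname{Pol}(U_n^{\rm nc})$ with state $\varphi_1 := h_n$, the Haar trace on the Brown algebra constructed in \cite{cebron+ulrich}, and $A_2 := \mathcal{Q}_n$ with state $\varphi_2 := \varphi_x$, the joint $*$-distribution of the family $(x_1,\dots,x_n)$ as defined in \Cref{def-jointdistribution}. The one check required here is that $\varphi_x$ is indeed a state on $\mathcal{Q}_n$, i.e., unital, linear, and positive. Unitality and linearity are immediate from the definition, and positivity follows because the evaluation map $p\mapsto p(x)$ is a unital $*$-homomorphism from $\mathcal{Q}_n$ into $A$, so $\varphi_x(p^*p) = \varphi(p(x)^*p(x)) \geq 0$ by positivity of $\varphi$.

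Second, once the data $(A_1,\varphi_1)$ and $(A_2,\varphi_2)$ are in place, the Proposition immediately before the Corollary yields that $E^{h_n \ast \varphi_x}_{h_n} \colon \operatorname{Pol}(U_n^{\rm nc}) \sqcup \mathcal{Q}_n \to \operatorname{Pol}(U_n^{\rm nc})$ is a well-defined conditional expectation (preserving the free product state $h_n \ast \varphi_x$). This is exactly the first assertion of the Corollary.

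Finally, the dual group action $\alpha_n$ is a $*$-homomorphism from $\mathcal{Q}_n$ into the free product $\operatorname{Pol}(U_n^{\rm nc}) \sqcup \mathcal{Q}_n$ (as recalled in Section~\ref{subsec-dga}). Composing two well-defined linear maps in the obvious order yields that $E^{h_n \ast \varphi_x}_{h_n} \circ \alpha_n$ is a linear map from $\mathcal{Q}_n$ to $\operatorname{Pol}(U_n^{\rm nc})$, as claimed. There is no real obstacle here; the only non-cosmetic point is the verification that $\varphi_x$ is a state, and I expect the Corollary's proof in the paper to be no more than a single sentence referring back to the Proposition.
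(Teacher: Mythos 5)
Your proposal is correct and matches the paper's intent exactly: the paper states this Corollary without proof ("In particular, we have:") precisely because it is the instantiation $A_1=\operatorname{Pol}(U_n^{\rm nc})$, $\varphi_1=h_n$, $A_2=\mathcal{Q}_n$, $\varphi_2=\varphi_x$ of the preceding Proposition, composed with the $*$-homomorphism $\alpha_n$. Your added verification that $\varphi_x$ is a state (positivity via the evaluation $*$-homomorphism) is the only substantive check, and it is the right one.
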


We have also nice formulas for the operator-valued cumulants $\kappa_{m}^{E}$ with $E=E^\varphi_{\varphi_i}$, given by the following theorem.

\begin{Theorem}[{\cite[Theorem~3.6]{nicaspeichershlyakhtenko}}]\label{cum_of_cond}
Let $A$ and $B$ be two free subalgebras of a noncommutative probability space $(M,\varphi)$. We assume that there exists a $\varphi$-preserving conditional expectation $E\colon M\to B$ and that $\left.\varphi\right|_{B}$ is non-degenerate $($in the sense that $0$ is the unique $b_1\in B$ such that $\varphi(b_1b_2)=0$ for all $b_2\in B)$. Then, for all $m\geq 1$ and all $a_1,\dots,a_m\in A$, $b_0,b_1,\dots,b_{m}\in B$, we have
\[
\kappa_{m}^{E}(b_0a_1b_1, \dots , a_{m-1}b_{m-1}, a_mb_m)=\kappa^{\varphi}_m(a_1,\dots,a_m)\varphi(b_1)\cdots\varphi(b_{m-1})b_0b_m.
\]
\end{Theorem}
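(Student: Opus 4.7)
The plan is to prove the formula by induction on $m$, combining the moment-cumulant inversion for $E$ with the scalar freeness of $A$ and $B$ inside $(M,\varphi)$, and using non-degeneracy of $\varphi|_B$ to lift scalar identities into $B$-valued ones. The base case $m=1$ reduces, by the bimodule property of $\kappa^E$, to showing $E(a_1)=\varphi(a_1)\cdot 1_B$ for $a_1\in A$: freeness gives $\varphi(a_1 b)=\varphi(a_1)\varphi(b)$ for every $b\in B$, $\varphi\circ E=\varphi$ yields $\varphi(E(a_1)b)=\varphi(a_1 b)$, and therefore $\varphi((E(a_1)-\varphi(a_1))b)=0$ for all $b\in B$, which forces $E(a_1)=\varphi(a_1)\cdot 1_B$ by non-degeneracy.

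For the inductive step I would use the moment-cumulant inversion
\[
\kappa_m^E(b_0 a_1 b_1,\dots,a_m b_m) = E(b_0 a_1 b_1\cdots a_m b_m) - \sum_{\pi \neq 1_m} \kappa_\pi^E[b_0 a_1 b_1,\dots,a_m b_m],
\]
where $\pi$ ranges over $\operatorname{NC}(m)\setminus\{1_m\}$. For each such $\pi$ the induction hypothesis evaluates the block-cumulants, since each block of $\pi$ corresponds to a sub-product of the same alternating form $\tilde b_0 \tilde a_1 \tilde b_1, \tilde a_2 \tilde b_2,\dots$, with $b_0$ and $b_m$ automatically sitting on the endpoints of the outermost block. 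For the moment $E(b_0 a_1 b_1\cdots a_m b_m)$ itself, non-degeneracy of $\varphi|_B$ converts the computation to evaluating $\varphi(b_0 a_1 b_1\cdots a_m b_m b_{m+1})$ for arbitrary $b_{m+1}\in B$; since mixed scalar cumulants between $A$ and $B$ vanish by freeness, only partitions in $\operatorname{NC}(2m+2)$ that split into an $A$-part (on the $a_i$-positions) and a $B$-part (on the $b_i$-positions) contribute, and a combinatorial analysis parallel to \Cref{cum_of_prod} together with the Kreweras-complement formulas of Section~\ref{sec-ce} identifies exactly which ones survive.

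Matching the two sides of the moment-cumulant identity block by block, the subtracted sum absorbs all contributions coming from $A$-partitions different from $1_m$, leaving precisely $\kappa_m^\varphi(a_1,\dots,a_m)\,\varphi(b_1)\cdots\varphi(b_{m-1})\,b_0 b_m$. The main obstacle is this matching step: one must verify that the scalar expansion of $\varphi(b_0 a_1 b_1\cdots a_m b_m b_{m+1})$ organizes into an outer sum over $A$-partitions producing the $\kappa^\varphi$-values, while the interior $B$-positions contribute through singletons (yielding the $\varphi(b_i)$'s) and the outer $B$-block absorbs $b_0,b_m,b_{m+1}$ into a single $\varphi(b_0 b_m b_{m+1})$ factor, this being forced by noncrossingness with the single $A$-block. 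Once this noncrossing bookkeeping is carried out, non-degeneracy of $\varphi|_B$ provides the final upgrade from a scalar identity valid for every $b_{m+1}\in B$ to the desired $B$-valued equality.
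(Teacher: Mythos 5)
The paper itself offers no proof of this statement: it is imported verbatim from Nica--Shlyakhtenko--Speicher \cite[Theorem~3.6]{nicaspeichershlyakhtenko}, so there is no internal argument to measure your attempt against. Judged on its own terms, your strategy (induction on $m$, moment--cumulant inversion for $E$, computation of the $E$-moment by pairing against an arbitrary $b_{m+1}$ and using vanishing of mixed scalar cumulants, then non-degeneracy of $\varphi|_B$ to upgrade the scalar identity) is a sound and standard route, and your base case $m=1$ is complete and correct.

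The gap is that the decisive step is asserted, not proved. After pairing with $b_{m+1}$, you must show, for every $\sigma\in\operatorname{NC}(m)$ with $\sigma\neq 1_m$, that $\varphi\bigl(\kappa^E_{\sigma}[b_0a_1b_1,\dots,a_mb_m]\,b_{m+1}\bigr)$ --- where $\kappa^E_{\sigma}$ is evaluated by nesting the induction hypothesis, with each inner block's $B$-valued output multiplied into the neighbouring argument according to the nesting convention --- equals $\prod_{V\in\sigma}\kappa^{\varphi}_{|V|}(a_V)$ times the sum of the scalar cumulants $\kappa^{\varphi}_{\pi_B}[b_0,\dots,b_mb_{m+1}]$ over precisely those $B$-partitions $\pi_B$ that do not cross $\sigma$; these form a down-set under a relative Kreweras complement $K(\sigma)$, so the sum collapses to the moment product $\varphi_{K(\sigma)}[b_0,\dots,b_mb_{m+1}]$. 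Only after this matching does the subtraction isolate the $\sigma=1_m$ contribution, for which your observation (interior $b_i$'s forced into singletons, $b_0,b_m,b_{m+1}$ regrouping into the single factor $\varphi(b_0b_mb_{m+1})$) is correct. This verification is exactly the content of the Kreweras-complement conditional-expectation formula recalled without proof in Section~\ref{sec-ce} (the Remark quoting \cite[Theorem~19]{mingo+speicher}); either invoke it explicitly or prove it, since that is where the actual work of the theorem lies, and your proposal stops short of it. A cleaner variant that avoids the subtraction bookkeeping altogether: define candidate cumulants by the right-hand side of the claimed formula, verify the full moment--cumulant relation for $E$ by the same Kreweras computation, and conclude by uniqueness of operator-valued cumulants.
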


\begin{Corollary} \label{cor::cum_of_cond}
Let $(A, \varphi_1)$ and $(B, \varphi_2)$ be two noncommutative probability spaces and consider the conditional expectation $E:=E^\varphi_{\varphi_2} \colon A \sqcup B \to B$. Then, for all $m\geq 1$ and all $a_1,\dots,a_m\in A$, $b_0,b_1,\dots,b_{m}\in B$, we have
\[
\kappa_{m}^{E}(b_0a_1b_1, \dots , a_{m-1}b_{m-1}, a_mb_m)=\kappa^{\varphi_1}_m(a_1,\dots,a_m)\varphi_2(b_1)\cdots\varphi_2(b_{m-1})b_0b_m.
\]
\end{Corollary}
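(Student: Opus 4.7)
The plan is to recognize this corollary as the specialization of \Cref{cum_of_cond} to the free product setting, with $M=A\sqcup B$, $\varphi=\varphi_1\ast\varphi_2$, and $E=E^\varphi_{\varphi_2}$. Two hypotheses of that theorem must be verified. First, $A$ and $B$ sit as free subalgebras of $(M,\varphi)$, which is immediate from the defining property of the free product state. Second, $E$ is $\varphi$-preserving, which is precisely the content of the preceding Proposition. Once these are in place, the cited theorem yields the identity directly, since the scalar $\varphi$-cumulants of elements of $A$ coincide with their $\varphi_1$-cumulants, and $\varphi|_B=\varphi_2$.

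The single delicate point is the non-degeneracy of $\varphi|_B=\varphi_2$ required by the cited theorem, which we do not wish to assume here. I would bypass this by reducing to the non-degenerate case: fix any noncommutative probability space $(C,\tau)$ with $\tau$ faithful, set $\tilde B=B\sqcup C$ with state $\tilde\varphi_2=\varphi_2\ast\tau$, and work inside $A\sqcup\tilde B$ with state $\varphi_1\ast\tilde\varphi_2$. The inclusion $B\hookrightarrow\tilde B$ sends $B^{o}=B\cap\ker\varphi_2$ into $(\tilde B)^{o}=\tilde B\cap\ker\tilde\varphi_2$, so the canonical decomposition of $A\sqcup B$ used to define $E^\varphi_{\varphi_2}$ refines the corresponding decomposition of $A\sqcup\tilde B$. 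A short verification then shows that $E^{\varphi_1\ast\tilde\varphi_2}_{\tilde\varphi_2}$ restricts to $E$ on $A\sqcup B$. Applying \Cref{cum_of_cond} in the enlarged algebra, where $\tilde\varphi_2$ is non-degenerate, and restricting the resulting identity back to $A\sqcup B$, delivers the corollary.

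I expect the main obstacle to be exactly this compatibility of conditional expectations under the enlargement, though the verification is essentially a bookkeeping exercise on the free product decomposition. As a self-contained alternative, one could bypass \Cref{cum_of_cond} entirely: unwind the explicit formula for $E^\varphi_{\varphi_2}[b_0a_1b_1\cdots a_mb_m]$ given in the Remark above and apply Möbius inversion on $\operatorname{NC}(m)$ to isolate the $m$-th $B$-valued cumulant. The factored form $\kappa^{\varphi_1}_m(a_1,\dots,a_m)\varphi_2(b_1)\cdots\varphi_2(b_{m-1})b_0b_m$ should then fall out from the combinatorics of the Kreweras complement, with the scalar factors $\varphi_2(b_i)$ corresponding to singleton blocks in $K(\pi)$ and the outer factors $b_0$, $b_m$ to the last block $V_{\rm last}$.
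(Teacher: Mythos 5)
Your first paragraph is exactly the paper's (implicit) argument: the corollary is read off \Cref{cum_of_cond} applied to $M=A\sqcup B$, $\varphi=\varphi_1\ast\varphi_2$, $E=E^\varphi_{\varphi_2}$, with freeness of $A$ and $B$ built into the free product state and $\varphi$-preservation supplied by the preceding proposition; the paper gives no further proof. The problem is with your proposed reduction to the non-degenerate case: the key claim that $\tilde\varphi_2=\varphi_2\ast\tau$ is non-degenerate on $\tilde B=B\sqcup C$ when $\tau$ is faithful is false. If $b\in B$ is a null vector, i.e., $\varphi_2(bb')=0$ for all $b'\in B$ (in particular $\varphi_2(b)=0$), then for any word $\tilde b=\beta_0c_1\beta_1c_2\cdots$ in $\tilde B$ one expands $\tilde\varphi_2(b\tilde b)$ over noncrossing partitions; by vanishing of mixed cumulants only partitions whose blocks stay inside $B$ or inside $C$ contribute, and in every such term the factor coming from the block containing the first letter is a $\varphi_2$-cumulant of $(b\beta_0,\beta_{j_2},\dots)$, each of whose moment summands contains a factor of the form $\varphi_2\bigl(b\,b''\bigr)$ with $b''\in B$, hence vanishes. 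So $b$ remains a null vector for $\tilde\varphi_2$: free products (like tensor products) never repair degeneracy, and your detour applies \Cref{cum_of_cond} only in situations where it already applied. Quotienting by the null space would restore non-degeneracy but changes the algebra in which the asserted identity of elements of $B$ is supposed to hold, so it does not obviously recover the statement on the nose either.

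If you want the corollary without any non-degeneracy hypothesis (which the paper silently ignores), the honest route is the one you only sketch at the end: use the explicit formula for $E^\varphi_{\varphi_2}$ on alternating words given in the Remark of Section~\ref{sec-ce} and perform the M\"obius inversion, where the Kreweras-complement bookkeeping indeed produces the factors $\varphi_2(b_1)\cdots\varphi_2(b_{m-1})$ and the outer term $b_0b_m$; equivalently, observe that non-degeneracy enters the proof of \Cref{cum_of_cond} only to identify the $\varphi$-preserving conditional expectation with the canonical free-product one, which here is given by construction. Either of these closes the gap; as written, the enlargement step does not.
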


\subsection{Two structures on the Brown algebra}

\begin{Definition}[\cite{brown, glockner+vonwaldenfels}]
Let $n\ge 1$. Denote by $\operatorname{Pol}(U_n^{\rm nc})$ the universal unital $*$-algebra with~$n^2$ generators~$u_{jk}$, $1\le j$, $k\le n$ and the relations
\[
\sum_{\ell=1}^n u_{j\ell}u^*_{k\ell} = \delta_{jk}1=\sum_{\ell=1}^n u^*_{\ell j} u_{\ell k}.
\]
This algebra is called the Brown algebra or Brown--Glockner--von Waldenfels algebra.
\end{Definition}

For $*$-homomorphisms $f\colon A\to C$ and $g\colon B\to D$, we denote by $f\bsqcup g$ the $*$-homomorphism $(i_C\circ f)\sqcup(i_D\circ g)\colon A\sqcup B\to C\sqcup D$, whereas, as above, $f\sqcup g\colon A\sqcup B\to C$ denotes the unique $*$-homomorphism such that $f=(f\sqcup g)\circ i_A$ and $g=(f\sqcup g)\circ i_B$, in case $f\colon A\to C$ and $g\colon B\to C$ are given. Recall that $i_A$ denotes $i_A\colon A\to A\sqcup B$.

\begin{Definition}[\cite{voiculescu}]
A dual group in the sense of Voiculescu is composed of a unital $*$-algebra~$A$ and three unital $*$-homomorphisms $\Delta \colon A \to A \sqcup A$, $\delta \colon A \to \mathbb{C}$ and $\Sigma \colon A \to A$ such that
\begin{itemize}\itemsep=0pt
\item $\Delta$ is a coassociative coproduct, i.e., $(\mathrm{id}_A \bsqcup \Delta)\circ \Delta = (\Delta \bsqcup \mathrm{id}_A)\circ \Delta$,
\item $\delta$ is a counit, i.e., $(\delta \bsqcup \mathrm{id}_A)\circ \Delta = \mathrm{id}_A = (\mathrm{id}_A \bsqcup \delta)\circ \Delta$,
\item $\Sigma$ is a coinverse, i.e., $(\Sigma \sqcup \mathrm{id}_A)\circ \Delta = \delta(\cdot) 1_A = (\mathrm{id}_A \sqcup \Sigma)\circ \Delta$.
\end{itemize}
\end{Definition}

\begin{Lemma}[\cite{voiculescu}]
The Brown algebra $\operatorname{Pol}(U_n^{\rm nc})$ is a dual group when it is endowed with the following $*$-homomorphisms:
\begin{itemize}\itemsep=0pt
\item the coproduct $\Delta$ defined on the generators by $\Delta(u_{ij}) = \sum_{k = 1}^n u_{ik}^{(1)} u_{kj}^{(2)}$,
\item the counit $\delta$ given on the generators by $\delta(u_{ij}) = \delta_{ij}$,
\item the coinverse $\Sigma$ given by $\Sigma(u_{ij}) = u_{ji}^*$.
\end{itemize}
\end{Lemma}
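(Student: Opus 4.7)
The plan is to invoke the universal property of the Brown algebra three times: to build $\Delta$, $\delta$, $\Sigma$ as $*$-homomorphisms out of $\operatorname{Pol}(U_n^{\rm nc})$ one must only check that the formulas prescribed on the generators send the defining relations $\sum_\ell u_{j\ell}u^*_{k\ell}=\delta_{jk}1=\sum_\ell u^*_{\ell j}u_{\ell k}$ to valid identities in the target algebra. Equivalently, one checks that the prescribed image matrices are unitary in the sense of the Brown algebra.

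First I would verify well-definedness. For $\delta$ the matrix $(\delta_{ij})$ is the identity matrix in $M_n(\mathbb C)$, so both relations collapse to $\delta_{jk}$ and there is nothing to do. For $\Sigma$ the candidate image matrix is $(u_{ji}^*)_{ij}$, and a direct computation
\[
\sum_\ell \Sigma(u_{j\ell})\Sigma(u_{k\ell})^* = \sum_\ell u_{\ell j}^* u_{\ell k}= \delta_{jk}1
\]
(and analogously for the other relation) uses precisely the second Brown relation to verify the first, and vice versa. For $\Delta$ one sets $v_{ij}:=\sum_k u_{ik}^{(1)}u_{kj}^{(2)}$ in $\operatorname{Pol}(U_n^{\rm nc})\sqcup \operatorname{Pol}(U_n^{\rm nc})$; then
\[
\sum_\ell v_{j\ell}v_{k\ell}^* = \sum_{m,p} u_{jm}^{(1)}\Bigl(\sum_\ell u_{m\ell}^{(2)}(u_{p\ell}^{(2)})^*\Bigr)(u_{kp}^{(1)})^* = \sum_m u_{jm}^{(1)}(u_{km}^{(1)})^* = \delta_{jk}1,
\]
where the first Brown relation is used in the second copy and then again in the first copy. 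The second relation $\sum_\ell v_{\ell j}^*v_{\ell k}=\delta_{jk}1$ is symmetric.

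Next I would verify the three dual-group axioms by evaluating each side on the generators $u_{ij}$ and extending by multiplicativity. Labelling the three free copies of $\operatorname{Pol}(U_n^{\rm nc})$ in $\operatorname{Pol}(U_n^{\rm nc})\sqcup\operatorname{Pol}(U_n^{\rm nc})\sqcup\operatorname{Pol}(U_n^{\rm nc})$ by superscripts $(1),(2),(3)$, both $(\mathrm{id}\bsqcup\Delta)\circ\Delta$ and $(\Delta\bsqcup\mathrm{id})\circ\Delta$ send $u_{ij}$ to $\sum_{k,l} u_{ik}^{(1)}u_{kl}^{(2)}u_{lj}^{(3)}$, giving coassociativity. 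For the counit, $(\delta\bsqcup\mathrm{id})\circ\Delta$ sends $u_{ij}$ to $\sum_k \delta_{ik}u_{kj}=u_{ij}$, and symmetrically on the other side. For the coinverse, $(\Sigma\sqcup\mathrm{id})\circ\Delta$ sends $u_{ij}$ to $\sum_k \Sigma(u_{ik})u_{kj}=\sum_k u_{ki}^*u_{kj}=\delta_{ij}1=\delta(u_{ij})1$, and analogously $(\mathrm{id}\sqcup\Sigma)\circ\Delta(u_{ij}) = \sum_k u_{ik}u_{jk}^* = \delta_{ij}1$.

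Essentially nothing here is a genuine obstacle; the only mild subtlety is bookkeeping with the notation $\bsqcup$ versus $\sqcup$ so that one does not confuse, e.g., $(\Sigma\sqcup\mathrm{id})\colon A\sqcup A\to A$ (which multiplies the images inside a single copy of $A$) with $(\Sigma\bsqcup\mathrm{id})\colon A\sqcup A\to A\sqcup A$ (which keeps them in separate free factors). Provided this is handled cleanly, the three axioms follow from the two Brown relations exactly once each, and so does the well-definedness.
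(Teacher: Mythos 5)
Your verification is correct: the universal property reduces everything to checking that the prescribed generator images satisfy the two Brown relations, and your computations (including the point that the coinverse axiom can be checked on generators because both sides are $*$-homomorphisms) are all accurate. Note that the paper itself offers no proof of this lemma — it is recalled from Voiculescu's work \cite{voiculescu} — and the argument you give is precisely the standard one that reference supplies, so there is no divergence of approach to report.
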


\begin{Remark}
The Hopf $*$-algebra $\operatorname{Pol}(U_n^+)$ of the quantum unitary group $U_n^+$ defined by Wang~\cite{wang} is obtained by dividing $\operatorname{Pol}(U_n^{\rm nc})$ by the ideal generated by the relations
\[
\sum_{\ell=1}^n u_{\ell j}u^*_{\ell k} = \delta_{jk}1=\sum_{\ell=1}^n u^*_{j\ell} u_{k\ell}.
\]
In order to distinguish the two algebras $\operatorname{Pol}(U_n^{\rm nc})$ and $\operatorname{Pol}(U_n^+)$, we will denote the generators of the former by $u_{jk}$, and the generators of the latter by $w_{jk}$, $1\le j$, $k\le n$. Denote the canonical quotient map by $\pi \colon \operatorname{Pol}(U_n^{\rm nc})\to\operatorname{Pol}(U_n^+)$, $\pi(u_{jk})=w_{jk}$; this map can be viewed as the restriction homomorphism of the inclusion $U_n^+ \subseteq U_n^{\rm nc}$ in some sense. But note that $U_n^{\rm nc}$ is not a quantum group.
\end{Remark}

Note that quantum groups always have a Haar state, but this is not true for dual groups \cite{cebron+ulrich}. The second author and Ulrich also define a weaker notion: the Haar trace, and prove that the Brown algebra admits a Haar trace with respect to the free product of states (this is also the state McClanahan used in \cite{mcclanahan}).

Let us recall the description of the cumulants of the $u_{ij}$'s with respect to the free Haar trace in \cite{cebron+ulrich}.
\begin{Proposition}[{\cite[Corollary~2.8]{cebron+ulrich}}] \label{cor_CU15}
The free cumulants of the noncommutative random variables $(u_{ij})_{1\leq i,j\leq n}$ and $((u^*)_{ij})_{1\leq i,j\leq n}:=(u_{ji}^*)_{1\leq i,j\leq n}$ in $(\operatorname{Pol}(U_n^{\rm nc}), h_n)$ are given as follows. Let
\[
1\leq i_1,k_1,\dots,i_m,k_m \leq n \qquad\text{and}\qquad (e_1,\dots,e_m)\in \{\varnothing,*\}^m.
\]
If the indices are cyclic, i.e., $i_1=k_m$ and $i_j=k_{j-1}$ for $2\leq j\leq m$, $m$ is even and the $e_j$ are alternating, we have
\[
\kappa_m^{h_n} \big[\big(u^{e_1}\big)_{i_1k_1},\dots,\big(u^{e_m}\big)_{i_mk_m}\big] = n^{1-m}(-1)^{m/2-1}C_{m/2-1},
\]
where, as before, $C_n$ denotes the $n$-th Catalan number.
Otherwise, the left-hand side is equal to zero.
\end{Proposition}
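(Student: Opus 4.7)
The strategy combines gauge-symmetry arguments with the free-product realization of the Haar trace $h_n$ used in \cite{cebron+ulrich}.

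First, for any $z,w\in\mathbb{T}^n$, the assignment $u_{ij}\mapsto z_i\bar{w}_j u_{ij}$ preserves the defining relations $\sum_\ell u_{j\ell}u_{k\ell}^*=\delta_{jk}1=\sum_\ell u_{\ell j}^*u_{\ell k}$ and therefore extends to a $*$-automorphism of $\operatorname{Pol}(U_n^{\rm nc})$. Uniqueness of $h_n$ forces these automorphisms to preserve $h_n$, hence all free cumulants $\kappa_m^{h_n}$. Specializing to $z_i=\bar{w}_i=e^{i\theta}$ forces equal numbers of $u$'s and $u^*$'s, so $m$ must be even; the full $\mathbb{T}^n\times\mathbb{T}^n$-invariance gives separate row- and column-balance conditions on the indices, which already eliminate a large class of non-cyclic configurations.

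Second, to rule out the remaining balanced-but-non-cyclic configurations and the non-alternating $e_j$ cases, I would use the realization of $h_n$ as the restriction of a tracial free-product state: $\operatorname{Pol}(U_n^{\rm nc})$ embeds into a tracial free product in which $u$ is identified with a concrete unitary built from matrix units in $M_n(\mathbb{C})$ and a free Haar unitary $v$. \Cref{cum_alternating} then transports the $R$-diagonal vanishing of $v$ to the entries $u_{ij}$, killing non-alternating cumulants, while a closed-walk analysis on the matrix units forces the surviving alternating cumulants to have cyclically matching indices $k_j=i_{j+1}$.

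Third, with the support of $\kappa_m^{h_n}$ now restricted to cyclic alternating tuples, the explicit value can be extracted in either of two complementary ways. Directly from the realization, the alternating Haar-unitary cumulants $\kappa_{2r}(v,v^*,\dots,v,v^*)=(-1)^{r-1}C_{r-1}$ combine with $m-1$ matrix-index summations---each averaging to a factor $1/n$---to produce $n^{1-m}(-1)^{m/2-1}C_{m/2-1}$. Alternatively one can induct: setting $c_m:=\kappa_m^{h_n}[u_{11},u_{11}^*,\dots,u_{11},u_{11}^*]$, one expands $h_n\bigl(\bigl(\sum_\ell u_{1\ell}u_{1\ell}^*\bigr)^{m/2}\bigr)=1$ via \Cref{cum_of_prod} together with the vanishing results of the first two steps, obtaining a single equation of the form $n^{m-1}c_m+P(c_2,\dots,c_{m-2})=1$ that reproduces the Catalan recursion and yields the claimed closed form.

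The main obstacle is the second step: pinning down the realization cleanly enough that \Cref{cum_alternating} transports the $R$-diagonal structure transparently, and matching the closed walks on matrix units with the cyclic index condition. Once this is in place, the first step is an elementary symmetry exercise and the third step reduces to routine bookkeeping of Kreweras complements in the cumulant-of-products formula.
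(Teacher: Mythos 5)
First, note that the paper does not prove this statement at all: it is imported verbatim as Corollary~2.8 of the cited work of C\'ebron and Ulrich, so there is no in-paper argument to compare yours against; your proposal has to stand on its own. On its own terms, the overall architecture (realize $h_n$ concretely, transport $R$-diagonality, then evaluate) is viable, and in fact steps 2--3 admit a cleaner implementation than the one you sketch: once one knows that $h_n$ is the joint distribution of the entries $\hat v_{ij}=\sum_k e_{ki}v e_{jk}$ of a Haar unitary $v$ free from $M_n(\mathbb{C})$ in a tracial space, one can pass to the corner $e_{11}Me_{11}$ (with trace $n\tau$), where $\hat v_{ij}$ corresponds to $e_{1i}ve_{j1}$, and invoke the standard Nica--Speicher formula for cumulants of compressions by matrix units of an element free from $M_n(\mathbb{C})$: the entry cumulants equal $n^{1-m}$ times the cumulants of $v$, multiplied by the cyclic index constraint $\delta_{k_1 i_2}\cdots\delta_{k_m i_1}$. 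Combined with the alternating Haar-unitary cumulants $(-1)^{m/2-1}C_{m/2-1}$ quoted in the paper, this yields the stated formula with no walk-counting at all.

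The genuine gap is the pivot of your step 2: you assert, but do not prove, that this matrix-unit/Haar-unitary realization computes the Haar trace $h_n$. Everything downstream only computes the entry cumulants of \emph{some} tracial state on $\operatorname{Pol}(U_n^{\rm nc})$; to conclude it is $h_n$ you must either verify the absorbing property $(\psi* h)\circ\Delta=h=(h*\psi)\circ\Delta$ for this realization (which requires the lemma that if $v$ is Haar and free from a subalgebra $B\supseteq M_n(\mathbb{C})$ and $w\in B$ is unitary, then $wv$ is again Haar and free from $B$) and then invoke uniqueness of the Haar trace from C\'ebron--Ulrich, or else cite their identification of $h_n$ with McClanahan's state and match it with your realization. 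Two further technical points: \Cref{cum_alternating} does not apply as stated, because $\hat v_{ij}$ is a sum of triple products $e_{ki}ve_{jk}$, not a product $a_ib_j$ of elements from two free families; you need \Cref{cum_of_prod} applied to triples, or the compression argument above. And in step 1, ``uniqueness of $h_n$ forces gauge invariance'' is too quick, since $u_{ij}\mapsto z_i\bar w_j u_{ij}$ is not a morphism of the dual group; one needs the intertwining relation $\Delta\circ\theta_{z,w}=(\theta_{z,1}\,\sqcup\,\theta_{1,w})\circ\Delta$ (interpreted in the free product) to see that $h_n\circ\theta_{z,w}$ is again a Haar trace --- though step 1 is dispensable anyway, since it only yields balance conditions, far weaker than cyclicity, and is subsumed by the realization once that is established.
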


We will denote the reduced and universal $C^*$-algebras and the von Neumann algebras associated to $U_n^+$ and $U_n^{\rm nc}$ by $C_r(U_n^{+/{\rm nc}})$, $C_u(U_n^{+/{\rm nc}})$, and $VN(U_n^{+/{\rm nc}})$. Here the reduced $C^*$-algebra of $U_n^{\rm nc}$ is defined as the closure of the image of $\operatorname{Pol}(U_n^{\rm nc})$ under the GNS-representation w.r.t.\ to the Haar trace defined in \cite{cebron+ulrich}, and we set $VN(U_n^{\rm nc})=C_r(U_n^{\rm nc})''$.

\subsection{Dual group actions}\label{subsec-dga}

Note that the Brown algebra $\operatorname{Pol}(U_n^{\rm nc})$ is both an involutive bialgebra (but not a Hopf $*$-algebra for $n>1$), cf.\ \cite{glockner+vonwaldenfels}, and a dual group (in the sense of Voiculescu \cite{voiculescu}), so we can consider two kinds of actions.\footnote{Terminology: an \emph{action} of $U_n^{\rm nc}$ or $U_n^+$ is a \emph{coaction} of (one of) their algebras $\operatorname{Pol}(U^{{\rm nc}/+}_n)$, $C(U^{{\rm nc}/+}_n)$.}

Note that a coaction of $\operatorname{Pol}(U_n^{\rm nc})$ as a bialgebra induces also a coaction of $\operatorname{Pol}(U_n^+)$, so we can exploit the results of \cite{curran}, see \Cref{sec-bialg}.

\begin{Definition}\label{finitedualaction}
An action of the dual group $G = (\operatorname{Pol}(G),\Delta,\delta,\Sigma)$ on the unital $*$-algebra $M$ is a morphism $\alpha \colon M \to \operatorname{Pol}(G) \sqcup M$ satisfying
\[(\Delta \text{\b{$\sqcup$}} {\rm id}_M)\circ \alpha = ({\rm id}_{\operatorname{Pol}(G)} \text{\b{$\sqcup$}} \alpha)\circ \alpha \qquad\text{and}\qquad (\delta \text{\b{$\sqcup$}} {\rm id}_M) \circ \alpha = {\rm id}_M .\]
\end{Definition}

Define $\mathcal{Q}_n$ as the $*$-algebra of noncommutative polynomials with $n$ variables and complex coefficients. Define $\alpha_n \colon \mathcal{Q}_n \to \operatorname{Pol}(U^{\rm nc}_n) \sqcup \mathcal{Q}_n$ as the unital $*$-homomorphism satisfying
\[\alpha_n(t_i) = \sum_{j = 1}^{n} u_{ij}t_j\]
and the corresponding fixed point algebra
\[\mathcal{Q}_n^{\rm fix} := \{ p \in \mathcal{Q}_n \mid \alpha_n(p)=p\} .\]
It is straightforward to prove that $\alpha_n$ is a dual action of $\operatorname{Pol}(U^{\rm nc}_n)$ on $\mathcal{Q}_n$, and thus we have the following lemma.

\begin{Lemma}
The Brown algebra acts as a dual group on the algebra $\mathcal{Q}_n$ of noncommutative polynomials with $n$ variables and complex coefficients.
\end{Lemma}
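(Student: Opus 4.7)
The plan is simply to verify the two defining axioms of a dual group action from \Cref{finitedualaction} on the generators of $\mathcal{Q}_n$. Since $\mathcal{Q}_n = \mathbb{C}\langle t_1,t_1^*,\ldots,t_n,t_n^*\rangle$ is the \emph{free} unital $*$-algebra on $n$ generators, there are no relations to check, so the assignment $t_i\mapsto \sum_j u_{ij}t_j$ (with its adjoint $t_i^*\mapsto \sum_j t_j^*u_{ij}^*$ forced by the $*$-structure) extends uniquely to a unital $*$-homomorphism $\alpha_n\colon \mathcal{Q}_n\to \operatorname{Pol}(U^{\rm nc}_n)\sqcup\mathcal{Q}_n$. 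Both sides of each of the two axioms are unital $*$-homomorphisms defined on $\mathcal{Q}_n$, so by the universal property it is enough to verify them on the generators $t_i$.

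First I would check the counit axiom. Applying $(\delta \bsqcup \mathrm{id}_{\mathcal{Q}_n})$ to $\alpha_n(t_i)=\sum_j u_{ij}t_j$ and using $\delta(u_{ij})=\delta_{ij}$ gives $\sum_j\delta_{ij}\,t_j = t_i$, as required.

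Next I would check coassociativity. Both compositions $(\Delta\bsqcup\mathrm{id}_{\mathcal{Q}_n})\circ\alpha_n$ and $(\mathrm{id}_{\operatorname{Pol}(U^{\rm nc}_n)}\bsqcup\alpha_n)\circ\alpha_n$ land in the triple free product $\operatorname{Pol}(U^{\rm nc}_n)\sqcup\operatorname{Pol}(U^{\rm nc}_n)\sqcup\mathcal{Q}_n$, whose two copies of the Brown algebra I label by superscripts $(1)$ and $(2)$. Using the formula $\Delta(u_{ij})=\sum_k u_{ik}^{(1)}u_{kj}^{(2)}$, the first composition sends $t_i$ to $\sum_{j,k} u_{ik}^{(1)} u_{kj}^{(2)} t_j$, while the second sends $t_i$ to $\sum_j u_{ij}^{(1)}\alpha_n(t_j) = \sum_{j,k} u_{ij}^{(1)} u_{jk}^{(2)} t_k$. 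Renaming the dummy indices $j\leftrightarrow k$ in one of them shows the two expressions agree in the triple free product.

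This exhausts the verification. There is no real obstacle; the only subtlety is keeping track of which copy of $\operatorname{Pol}(U^{\rm nc}_n)$ occupies which slot of the iterated free product when applying $\bsqcup$.
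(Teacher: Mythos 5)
Your verification is correct and is exactly the routine check the paper has in mind: the paper simply asserts that it is straightforward to see that $\alpha_n$ is a dual group action, and your argument (extend $t_i\mapsto\sum_j u_{ij}t_j$ by the universal property of the free $*$-algebra, then verify the counit and coassociativity axioms of \Cref{finitedualaction} on the generators, keeping track of the two copies of $\operatorname{Pol}(U_n^{\rm nc})$ in the iterated free product) is precisely that straightforward verification. Nothing is missing.
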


\begin{Proposition} \label{mainthm}
We have
\[
\mathcal{Q}_n^{\rm fix}=\mathbb{C}\left\langle \sum_{j=1}^n t_j^* t_j \right\rangle
\]
and more generally, if $\varphi_n$ is a non-degenerate state on $\mathcal{Q}_n$, we have
\[ E_{\varphi_n}^{h_n * \varphi_n} \circ \alpha_n (\mathcal{Q}_n) = \mathcal{Q}_n^{\rm fix}=\mathbb{C}\left\langle \sum_{j=1}^n t_j^* t_j \right\rangle .\]
\end{Proposition}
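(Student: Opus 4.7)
The plan is to prove the three containments
\[
\mathbb{C}\bigl\langle {\textstyle \sum_{j=1}^n t_j^* t_j} \bigr\rangle \subseteq \mathcal{Q}_n^{\rm fix} \subseteq E_{\varphi_n}^{h_n*\varphi_n}\circ \alpha_n(\mathcal{Q}_n) \subseteq \mathbb{C}\bigl\langle {\textstyle \sum_{j=1}^n t_j^* t_j} \bigr\rangle ,
\]
from which all three sets coincide.

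First, I would verify the leftmost inclusion by direct computation: using the Brown algebra relation $\sum_j u_{jk}^* u_{jl} = \delta_{kl}$,
\[
\alpha_n\Bigl({\textstyle \sum_j t_j^* t_j}\Bigr) = \sum_j \Bigl(\sum_k t_k^* u_{jk}^*\Bigr)\Bigl(\sum_l u_{jl} t_l\Bigr) = \sum_{k,l} t_k^* \delta_{kl} t_l = \sum_k t_k^* t_k,
\]
so $\sum_j t_j^*t_j$ is a fixed point and hence every polynomial in it is fixed. The inclusion $\mathcal{Q}_n^{\rm fix} \subseteq E_{\varphi_n}^{h_n*\varphi_n}\circ \alpha_n(\mathcal{Q}_n)$ is then automatic: by construction, the conditional expectation $E := E_{\varphi_n}^{h_n*\varphi_n}$ restricted to the canonical copy of $\mathcal{Q}_n$ in the free product is the identity, so any $p \in \mathcal{Q}_n^{\rm fix}$ satisfies $E(\alpha_n(p)) = E(p) = p$.

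The main step is the rightmost inclusion $E \circ \alpha_n(\mathcal{Q}_n) \subseteq \mathbb{C}\bigl\langle \sum_j t_j^* t_j \bigr\rangle$. For a monomial $p = t_{i_1}^{e_1}\cdots t_{i_k}^{e_k}$, the image $\alpha_n(p)$ expands as a sum of words alternating between $u$-factors $u_{i_l j_l}^{e_l}$ in $\operatorname{Pol}(U_n^{\rm nc})$ and $t$-factors $t_{j_l}^{e_l}$ in $\mathcal{Q}_n$ (with $e_l = \varnothing$ putting the $u$-factor first in its pair and $e_l = *$ putting the $t$-factor first). Applying the $\mathcal{Q}_n$-valued moment-cumulant formula together with Corollary \ref{cor::cum_of_cond}, I would rewrite $E(\alpha_n(p))$ as a sum over non-crossing partitions $\pi$ of the $u$-positions, in which each block contributes a scalar cumulant $\kappa_\pi^{h_n}$ of the $u$'s while the complementary $t$-factors appear either as $\varphi_n$-valued moments (for the inner blocks of the Kreweras complement) or as a residual $t$-word (for the outermost block).

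By Proposition \ref{cor_CU15}, every non-vanishing block of $\pi$ must have even size, alternating stars, and cyclic indices on the pairs $(i_l,j_l)$. The cyclic identifications produce Kronecker deltas on the summation indices $\vec j$; combined with the star alternation, these force the residual $t$-factors inside each block to close up upon summation into pairs $t_{j_s}^* t_{j_s}$, giving factors of $\sum_l t_l^* t_l$, while the fixed indices $\vec i$ only contribute overall $\delta$-coefficients. This is precisely the mechanism that, in the base case $k=2$, produces the identity $E(\alpha_n(t_i^* t_j)) = \delta_{ij} n^{-1}\sum_l t_l^* t_l$. The main obstacle I expect is the combinatorial bookkeeping for general $k$: tracking how the cyclicity deltas from each block of $\pi$ interact with the partition structure and with the star pattern $(e_l)$ so that every residual summation over $\vec j$ collapses into products of $\sum_l t_l^* t_l$ with scalar $\varphi_n$-valued coefficients. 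An induction on $k$, or on the depth of the non-crossing partition $\pi$, should be the right tool to make this precise.
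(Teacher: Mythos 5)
Your first two inclusions coincide with the paper's proof and are fine: the direct computation showing $\sum_j t_j^*t_j$ is fixed, and the observation that $E_{\varphi_n}^{h_n*\varphi_n}\circ\alpha_n$ acts as the identity on $\mathcal{Q}_n^{\rm fix}$. The problem is the third inclusion $E_{\varphi_n}^{h_n*\varphi_n}\circ\alpha_n(\mathcal{Q}_n)\subseteq\mathbb{C}\langle\sum_j t_j^*t_j\rangle$, which is the entire substance of the proposition: what you give is a plan, not a proof. You correctly identify the mechanism (cyclicity and star-alternation constraints from \Cref{cor_CU15} forcing the surviving index sums to close into copies of $\sum_l t_l^*t_l$) and you verify the base case $E(\alpha_n(t_i^*t_j))=\delta_{ij}n^{-1}\sum_l t_l^*t_l$, but the general combinatorial collapse is exactly the step you flag as ``the main obstacle'' and leave to an unspecified induction. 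Two concrete difficulties with your chosen route remain unaddressed: (i) $\alpha_n(t_{i_1}^{e_1})\cdots\alpha_n(t_{i_k}^{e_k})$ is \emph{not} an alternating word in single generators --- adjacent $t$-letters merge when $e_l=\varnothing$, $e_{l+1}=*$ and adjacent $u$-letters merge when $e_l=*$, $e_{l+1}=\varnothing$ --- so the Kreweras-complement expansion applies only to grouped letters, and the cumulants $\kappa_\pi^{h_n}$ of those grouped $u$-letters must be further decomposed via \Cref{cum_of_prod} before \Cref{cor_CU15} can be invoked; (ii) even granting the expansion, you must control how the residual $t$-word sitting in the last Kreweras block interacts with scalar factors coming from the other blocks at every level of nesting, which your sketch does not do.

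The paper sidesteps this bookkeeping by working one level up, with operator-valued cumulants rather than moments: using \Cref{cor::cum_of_cond} it computes $\kappa_m^{E}\bigl(\alpha_n\bigl(t_{i_1}^{e_1}\bigr)b_1,\dots,b_{m-1}\alpha_n\bigl(t_{i_m}^{e_m}\bigr)\bigr)$ for arbitrary insertions $b_j$ from the candidate subalgebra; each argument contains exactly one $u$-generator, so the formula produces $\kappa_m^{h_n}\bigl(u_{i_1k_1}^{e_1},\dots,u_{i_mk_m}^{e_m}\bigr)$ directly and \Cref{cor_CU15} applies without any product decomposition. The two alternating cases then yield either a scalar or a scalar multiple of $\sum_k t_k^*t_k$, i.e.\ the cumulant maps leave $\mathbb{C}\langle\sum_j t_j^*t_j\rangle$ invariant, and the nested operator-valued moment--cumulant formula converts this into the statement about moments --- this is precisely why the $b_j$'s are carried along, and it is the structural ingredient your induction would have to rediscover. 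To repair your proof, either carry out the grouped-letter/Kreweras induction in full (including the nesting of insertions), or switch to the cumulant-level argument as above.
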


\begin{proof}
``$\mathbb{C}\big\langle \sum_{j=1}^n t_j^* t_j \big\rangle\subseteq \mathcal{Q}_n^{\rm fix}$'':
It follows from the relations of $\operatorname{Pol}(U^{\rm nc}_n)$ that
\[ \alpha_n\Bigg(\sum_{j=1}^n t_j^* t_j\Bigg) = \sum_{j=1}^n \sum_{k=1}^n \sum_{l=1}^n t_k^* u_{jk}^* u_{jl} t_l = \sum_{k=1}^n \sum_{l=1}^n t_k^* \delta_{kl} t_l = \sum_{k=1}^n t_k^* t_k .\]
Hence we have $\sum_{j=1}^n t_j^* t_j \in \mathcal{Q}_n^{\rm fix}$.

``$\mathcal{Q}_n^{\rm fix}\subseteq E_{\varphi_n}^{h_n * \varphi_n} \circ \alpha_n (\mathcal{Q}_n)$'':
Let $x\in \mathcal{Q}_n^{\rm fix}$. Then we have
\[ E_{\varphi_n}^{h_n * \varphi_n} \circ \alpha_n (x)
= E_{\varphi_n}^{h_n * \varphi_n} (x)
= \kappa_{1}^{h_n}[1] \cdot x
= x .\]

``$E_{\varphi_n}^{h_n * \varphi_n} \circ \alpha_n (\mathcal{Q}_n) \subseteq \mathbb{C}\big\langle \sum_{j=1}^n t_j^* t_j \big\rangle$'':
We want to compute the moments of $(\alpha_n(t_1),\dots,\alpha_n(t_n))$ with respect to $E_{\varphi_n}^{h_n \ast \varphi_n}$. In order to do so, we will compute their free cumulants.
We set $E:= E_{\varphi_n}^{h_n \ast \varphi_n}$.
Let $ m \geq 1$, $(i_1, \dots, i_m) \in [n]^m$, $\underline{e} = (e_1, \dots, e_m) \in \{\varnothing, *\}^m$ and $b_1, \dots,b_{m-1} \in \mathcal{Q}_n^{\rm fix}$.
We can use \Cref{cor::cum_of_cond} in order to compute
\[\kappa_{m}^{E} \big(\alpha\big(t_{i_1}^{e_1}\big)b_1, \dots , b_{m-1} \alpha\big(t_{i_m}^{e_m}\big)\big)=\sum_{k_1, \dots, k_m \in [n]^m} \kappa_m^{h_n} \big(u_{i_1k_1}^{e_1},\dots, u_{i_mk_m}^{e_m}\big)\times (*).\]
Thanks to the data of the free cumulants of the $(u_{ij})$ (given by \Cref{cor_CU15}), we see that all the terms of the sum are vanishing if $\underline{e}$ is not alternating.

Let us examine the case where $m=2r$ and $\underline{e}=(\varnothing,*,\dots,\varnothing,*)$ is alternating,
\begin{align*}
 &\kappa_{m}^{E} \big(\alpha\big(t_{i_1}^{e_1}\big)b_1, \dots , b_{m-1} \alpha\big(t_{i_m}^{e_m}\big)\big)\\
 &\qquad{}=\sum_{k_1, \dots, k_{2r} \in [n]^{2r}} \kappa_m^{h_n} \big(u_{i_1k_1},\dots, u_{i_mk_m}^*\big) \varphi\big(t_{k_1}b_1t_{k_2}^*\big) \varphi(b_2) \cdots \varphi\big(t_{k_{2r-1}}b_{2r-1}t_{k_{2r}}^*\big)\\
 &\qquad{}=(-1)^{r-1}n^{1-2r}C_{r-1}\delta_{i_1=i_{2r},i_2=i_3, \dots} \\
 &\qquad\quad{}\times \sum_{k_1=k_{2},k_3=k_4, \ldots \in [n]^{2r}} \varphi\big(t_{k_1}b_1t_{k_2}^*\big) \varphi(b_2)\cdots \varphi\big(t_{k_{2r-1}}b_{2r-1}t_{k_{2r}}^*\big).
\end{align*}
Similarly, if $m=2r$ and $\underline{e}=(*,\varnothing,\dots,*,\varnothing)$ is alternating, we have
\begin{gather*}
\kappa_{m}^{E} \big(\alpha\big(t_{i_1}^{e_1}\big)b_1, \dots , b_{m-1} \alpha\big(t_{i_m}^{e_m}\big)\big)\\
\quad =(-1)^{r-1}n^{1-2r}C_{r-1}\delta_{i_1=i_2,i_3=i_4, \dots}\!
 \sum_{k_1=k_{2r},k_2=k_3, \ldots \in [n]^{2r}} t_{k_1}^*t_{k_{2r}} \varphi(b_1) \varphi\big(t_{k_2}b_2t_{k_3}^*\big)\cdots
\varphi(b_{2r-1}).\end{gather*}
Finally, we have shown that the maps
\[(b_1,\dots,b_{m-1})\mapsto \kappa_{m}^{E} \big(\alpha\big(t_{i_1}^{e_1}\big)b_1, \dots , b_{m-1} \alpha\big(t_{i_m}^{e_m}\big)\big),\]
leave $\mathbb{C}\big\langle \sum_{j=1}^n t_j^* t_j \big\rangle$ invariant. This means that the moments of $(\alpha_n(t_1),\dots,\alpha_n(t_n))$ w.r.t.\ $E=E_{\varphi_n}^{h_n \ast \varphi_n}$ are in $\mathbb{C}\big\langle \sum_{j=1}^n t_j^* t_j \big\rangle$, which implies that $E_{\phi_n}^{h_n * \varphi_n} \circ \alpha_n$ takes values in $\mathbb{C}\big\langle \sum_{j=1}^n t_j^* t_j \big\rangle$.
\end{proof}

\section{Finite de Finetti theorems for dual group actions}\label{sec-dg}

In this section, we first consider the case of finite sequences: surprisingly, unlike in the quantum group situation, we are able to
prove a \emph{finite} de Finetti theorem for the dual group action of~$\operatorname{Pol}(U_n^{\rm nc})$. Also, when restricting to the tracial case, we will give a refined characterization in terms of freely uniform unit vectors.

\subsection{Invariance for finite sequences in the general case}

\begin{Definition}\label{def-inv-dg-finite}
The $n$-tuple $(x_1, \dots, x_n)$ of random variables in a noncommutative probability space $(A, \varphi)$ is called \emph{invariant under $U_n^{\mathrm{nc}}$}, whenever its distribution is invariant under the action~$\alpha_n$, i.e., $\varphi_x 1_{U_n^{nc}} = E_{h_n}^{h_n \ast \varphi_x} \circ \alpha_n$ or more precisely
\begin{gather}
\forall m \geq 1, \ \forall (i_1, \dots, i_m) \in [n]^m, \ \forall \underline{e} = (e_1, \dots, e_m) \in \{\varnothing, *\}^m,\nonumber\\
\varphi\big(x_{i_1}^{e_1} \cdots x_{i_m}^{e_m}\big) 1_{U_n^{nc}} = E_{h_n}^{h_n \ast \varphi_x} \circ \alpha_n\big(t_{i_1}^{e_1} \cdots t_{i_m}^{e_m}\big).
\label{nInv}
\end{gather}
\end{Definition}

Here comes our finite de Finetti theorem for the dual group action of $\operatorname{Pol}(U_n^{\rm nc})$.

\begin{Theorem}\label{thm-DeFin1}
Let $n$ be a natural integer and $(x_1, \dots, x_n)$ be a family of random variables in a~noncommutative probability space $(A, \varphi)$. The following are equivalent:
\begin{enumerate}\itemsep=0pt
\item[$1.$] The family $(x_1, \dots, x_n)$ is composed of $R$-diagonal elements such that the joint free cumulants are zero except those of type $\kappa_{2r}(x_{i_1}^*, x_{i_1}, \dots, x_{i_r}^*, x_{i_r})$ and $\kappa_{2r}(x_{i_1}, x_{i_2}^*, x_{i_2},\dots, x_{i_r}^*, x_{i_r}, \allowbreak x_{i_1}^*)$ for all $r\in \mathbb{N}$. Moreover, these cumulants depend only on the length~$2r$. 
\item[$2.$] The family $(x_1, \dots, x_n)$ is invariant under the dual group action $\alpha_n$.
\item[$3.$] We have $\varphi_x=(h_n*\varphi_x)\circ\alpha_n$.
\end{enumerate}
\end{Theorem}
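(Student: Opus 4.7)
The plan is to establish the cycle $(2)\Rightarrow(3)\Rightarrow(1)\Rightarrow(2)$. The implication $(2)\Rightarrow(3)$ is immediate: applying the Haar trace $h_n$ to both sides of \eqref{nInv}, and using that $E^{h_n*\varphi_x}_{h_n}$ preserves the free-product state $h_n*\varphi_x$ while $(h_n*\varphi_x)|_{\operatorname{Pol}(U_n^{\rm nc})}=h_n$, gives $\varphi_x=(h_n*\varphi_x)\circ\alpha_n$.

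For $(3)\Rightarrow(1)$, equality of states means that $(x_1,\dots,x_n)$ under $\varphi$ and $(\alpha_n(t_1),\dots,\alpha_n(t_n))$ under $h_n*\varphi_x$ have identical joint $*$-distributions, hence identical free cumulants, and I would compute the latter directly. Since $\{u_{jk}\}$ and $\{t_k\}$ are $*$-free in $\operatorname{Pol}(U_n^{\rm nc})\sqcup\mathcal{Q}_n$ and since by \Cref{cor_CU15} the $h_n$-cumulants of the $u_{jk}$'s vanish outside the cyclic alternating pattern, \Cref{cum_alternating} applied to the products $(u_{i_jk_j}t_{k_j})^{e_j}$ forces the cumulants of $(x_i)$ to vanish unless the $*$-pattern is alternating, which is $R$-diagonality. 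The remaining structure follows from multilinearity combined with the cumulants-of-products formula \Cref{cum_of_prod}: freeness eliminates every mixed-block partition, while the cyclic index constraint $i_1=k_m$, $i_j=k_{j-1}$ of \Cref{cor_CU15} leaves exactly the two ``bookend'' index patterns $\kappa_{2r}(x_{i_1}^*,x_{i_1},\dots,x_{i_r}^*,x_{i_r})$ and $\kappa_{2r}(x_{i_1},x_{i_2}^*,\dots,x_{i_r}^*,x_{i_r},x_{i_1}^*)$, with values depending only on the length $2r$ through the constant $n^{1-2r}(-1)^{r-1}C_{r-1}$ and the moments of the $x_k$'s.

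For $(1)\Rightarrow(2)$, I would prove that $E^{h_n*\varphi_x}_{h_n}(\alpha_n(M))\in\mathbb{C}\cdot 1$ and then pin down the scalar. Expanding $\alpha_n(M)=\sum_{k_1,\dots,k_m}(u_{i_1k_1}t_{k_1})^{e_1}\cdots(u_{i_mk_m}t_{k_m})^{e_m}$ and using the $\operatorname{Pol}(U_n^{\rm nc})$-bimodule property of $E$ to pull every $u$ out of the operator-valued cumulants (merging interior $u$'s across adjacent tensor slots), \Cref{cum_of_cond} reduces each $\kappa_m^E$ to a sum of $\varphi$-cumulants of the $x_{k_j}$'s weighted by $h_n$-moments of short $u$-monomials, with outer $u$-factors at the two ends. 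Hypothesis (1) restricts the non-zero contributions to the two cyclic index patterns, and the Brown algebra defining relations $\sum_\ell u_{j\ell}u_{k\ell}^*=\delta_{jk}1=\sum_\ell u_{\ell j}^*u_{\ell k}$ then collapse the remaining $u$-sums into Kronecker deltas; as a prototype at $m=2$, $E(\alpha_n(t_it_j^*))=\sum_{k,\ell}\varphi(x_kx_\ell^*)u_{ik}u_{j\ell}^*=\kappa_2^\varphi(x,x^*)\sum_ku_{ik}u_{jk}^*=\kappa_2^\varphi(x,x^*)\,\delta_{ij}\cdot 1$. That the resulting scalar equals $\varphi(M(x))$ then follows by applying $h_n$ and invoking the reverse of the cumulant matching from $(3)\Rightarrow(1)$, which already shows $(h_n*\varphi_x)(\alpha_n(M))=\varphi(M(x))$ under $(1)$.

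The main obstacle lies in the combinatorial bookkeeping of both cumulant computations: in $(3)\Rightarrow(1)$, matching the cyclic index constraint of \Cref{cor_CU15} against the two ``bookend'' index forms in $(1)$ and ruling out any other pattern; and in $(1)\Rightarrow(2)$, verifying by induction on $m$ that the iterated telescoping via the Brown algebra relations cleanly reduces $E(\alpha_n(M))$ to a scalar multiple of $1$ and that this scalar is exactly $\varphi(M(x))$.
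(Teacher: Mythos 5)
Your proposal is correct and follows essentially the same route as the paper: $(2)\Rightarrow(3)$ by applying $h_n$, $(3)\Rightarrow(1)$ via \Cref{cor_CU15}, \Cref{cum_alternating}, \Cref{cum_of_prod} and freeness, and $(1)\Rightarrow(2)$ by evaluating the operator-valued cumulants with respect to $E^{h_n*\varphi_x}_{h_n}$ through \Cref{cor::cum_of_cond} and collapsing the index sums. Two details to watch when writing it up: the interior collapses $\sum_k h_n\big(u_{ik}^*u_{jk}\big)=\delta_{ij}$ are not instances of the defining relations of $\operatorname{Pol}(U_n^{\rm nc})$ (that column-type relation only holds in $\operatorname{Pol}(U_n^+)$) and must instead be obtained from the traciality of $h_n$ or the explicit second moments given by \Cref{cor_CU15}; and your closing appeal to ``the reverse of the cumulant matching from $(3)\Rightarrow(1)$'' to pin down the scalar is unnecessary and, as stated, not a proof of $(1)\Rightarrow(3)$---your own collapse already yields $\kappa_m^{E}\big(\alpha_n\big(t_{i_1}^{e_1}\big),\dots,\alpha_n\big(t_{i_m}^{e_m}\big)\big)=\kappa_m^{\varphi_x}\big(t_{i_1}^{e_1},\dots,t_{i_m}^{e_m}\big)1$, which identifies the scalar exactly as in the paper.
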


\begin{proof}
Implication $(3)\rightarrow (1)$:
Let $ m \geq 1$, $\underline{i} = (i_1, \dots, i_m) \in [n]^m$, $\underline{e} = (e_1, \dots, e_m) \in \{\varnothing, *\}^m$. We compute
\begin{align*}
\kappa_m^{\varphi}\big(x_{i_1}^{e_1},\dots , x_{i_m}^{e_m}\big)
 &=\kappa_m^{\varphi_x}\big(t_{i_1}^{e_1},\dots , t_{i_m}^{e_m}\big)
 =\kappa_m^{h_n*\varphi_x}\big(\alpha_n\big(t_{i_1}^{e_1}\big),\dots , \alpha_n\big(t_{i_m}^{e_m}\big)\big)\\
 &=\sum_{k_1, \dots, k_{m} \in [n]} \kappa_m^{h_n*\varphi_x}\big(\big(u_{i_1k_1}t_{k_1}\big)^{e_1},\dots , \big(u_{i_mk_m}t_{k_m}\big)^{e_m}\big),\end{align*}
 where we used $\varphi_x=(h_n*\varphi_x)\circ\alpha_n$ for the second equality.
 By \Cref{cor_CU15}, the free cumulants $\kappa_m^{h_n}\big(u_{i_1k_1}^{e_1},\dots , u_{i_mk_m}^{e_m}\big)$ vanish if $\underline{e}$ is not alternating. Moreover, the elements $\{u_{ik}\}$ and $\{t_k\}$ are by construction $*$-free with respect to $(\operatorname{Pol}(U_n^{\rm nc}) \sqcup \mathcal Q_n ,h_n*\varphi_x)$. Thus \Cref{cum_alternating} implies that all the terms in the above sum are vanishing if $\underline{e}$ is not alternating.

 Let us examine the case where $m=2r$ and $\underline{e}=(\varnothing,*,\dots,\varnothing,*)$ is alternating, the case $\underline{e}=(*,\varnothing,\dots,*,\varnothing)$ being similar.
 By using \Cref{cum_of_prod}, we have
\begin{align*}
 \kappa_m^{\varphi}(x_{i_1},\dots , x_{i_m}^*)
 &=\sum_{k_1, \dots, k_{m} \in [n]}\kappa_m^{h_n*\varphi_x}(u_{i_1k_1}t_{k_1},\dots , t_{k_m}^*u_{i_mk_m}^*)\\
 &=\sum_{k_1, \dots, k_{m} \in [n]}\sum_{\substack{\pi\in \operatorname{NC}(2m)\\ \pi\vee \sigma=1_{2m}}}\kappa_\pi^{h_n*\varphi_x}(u_{i_1k_1},t_{k_1},\dots , t_{k_m}^*,u_{i_mk_m}^*),
\end{align*}
with $\sigma=\left\{\{1,2\}, \dots, \{2m-1, 2m\}\right\}$.

Since the elements $\{u_{ik}\}$ and $\{t_k\}$ are $*$-free, their mixed cumulants vanish and hence only such partitions $\pi\in \operatorname{NC}(2m)$ contribute to the above sum for which each of their blocks connects elements only from $\{u_{ik},u_{ik}^*\}$ or only from $\{t_{k},t_{k}^*\}$. For such a partition $\pi\in \operatorname{NC}(2m)$, we denote by $\pi_u\in \operatorname{NC}(m)$ the subpartition of $\pi$ corresponding to the elements $\{u_{ik},u_{ik}^*\}$ and by $\pi_t\in \operatorname{NC}(m)$ the subpartition of $\pi$ corresponding to the elements $\{t_{k},t_{k}^*\}$.

Then the non-zero mixed cumulants can be written as
\[\kappa_\pi^{h_n*\varphi_x}(u_{i_1k_1},t_{k_1},\dots , t_{k_m}^*,u_{i_mk_m}^*) = \kappa_{\pi_u}^{h_n}(u_{i_1k_1}, \dots, u_{i_mk_m}^*) \kappa_{\pi_t}^{\varphi_x}(t_{k_1}, \dots, t_{k_m}^*) .\]
Moreover, \Cref{cum_of_prod} tells us that
\[\{(1,2m),(2,3),\dots,(2m-2,2m-1)\} \preceq \pi\]
in all the terms of the above sum. Thus, thanks to the data of the free cumulants of the $(u_{ij})$ given by \Cref{cor_CU15}, all the terms $\kappa_{\pi_u}^{h_n}(u_{i_1k_1}, \dots, u_{i_mk_m}^*)$ are vanishing unless $i_1= i_{m}$, $i_2= i_{3}$, $\dots$ and $k_1 = k_2$, $k_3=k_4$, $\dots$. Moreover, in this particular case, the value of $\kappa_{\pi_u}^{h_n}(u_{i_1k_1}, \dots, u_{i_mk_m}^*)$ does not depend on the indices $i_1,\dots ,i_l$.

Implication $(1)\rightarrow (2)$:
We set $E:= E_{h_n}^{h_n \ast \varphi_x}$. Let $ m \geq 1$, $(i_1, \dots, i_m) \in [n]^m$, $\underline{e} = (e_1, \dots, e_m) \in \{\varnothing, *\}^m$.
We can use \Cref{cor::cum_of_cond} in order to compute
\begin{align*}
\kappa_{m}^{E} \big(\alpha_n\big(t_{i_1}^{e_1}\big), \dots , \alpha_n\big(t_{i_m}^{e_m}\big)\big)
&=\sum_{k_1, \dots, k_m \in [n]^m} \kappa_{m}^{E} \big(\big(u_{i_1k_1}t_{k_1}\big)^{e_1}, \dots , \big(u_{i_mk_m}t_{k_m}\big)^{e_m}\big)\\
&=\sum_{k_1, \dots, k_m \in [n]^m} \kappa_m^{\varphi_x} \big(t_{k_1}^{e_1},\dots, t_{k_m}^{e_m}\big)\times (*) \\
&=\sum_{k_1, \dots, k_m \in [n]^m} \kappa_m^{\varphi} \big(x_{k_1}^{e_1},\dots, x_{k_m}^{e_m}\big)\times (*),
\end{align*}
where the term $(*)$ depends on $\underline{i}$, $\underline{e}$, and $k_1, \dots, k_m$. By assumption the cumulants $\kappa_m^{\varphi} \big(x_{k_1}^{e_1},\dots,\allowbreak x_{k_m}^{e_m}\big)$ vanish if $\underline{e}$ is not alternating and the same holds for $\kappa_{m}^{E} \big(\alpha_n(t_{i_1}^{e_1}), \dots , \alpha_n(t_{i_m}^{e_m})\big)$ by the above equation.

Let us examine the case where $m=2r$ and $\underline{e}=(\varnothing,*,\dots,\varnothing,*)$ is alternating.
\begin{gather*}
\kappa_{m}^{E} (\alpha(t_{i_1}), \dots , \alpha(t_{i_m}^*))\\
\qquad{} =\sum_{k_1, \dots, k_{2r} \in [n]^{2r}} \kappa_m^{\varphi_x} (t_{k_1},\dots, t_{k_m}^*)
 h_n(u_{i_2k_2}^*u_{i_3k_3})\cdots h_n(u_{i_{2r-2}k_{2r-2}}^*u_{i_{2r-1}k_{2r-1}})u_{i_1k_1}u^*_{i_{2r}k_{2r}}\\
 \qquad{}=\kappa_m^{\varphi_x} (t_{1},\dots, t_{1}^*) \sum_{\substack{k_1=k_{2r}, \\ k_2=k_3, \ldots \in [n]^{2r}}} h_n(u_{i_3k_3}u_{i_2k_2}^*)\cdots h_n(u_{i_{2r-1}k_{2r-1}}u_{i_{2r-2}k_{2r-2}}^*)u_{i_1k_1}u^*_{i_{2r}k_{2r}}\\
 \qquad{}=\kappa_m^{\varphi_x} (t_{1},\dots, t_{1}^*)\delta_{i_1=i_{2r},i_2=i_3, \dots}1_{U_n^{nc}}
 =\kappa_m^{\varphi_x} (t_{i_1},\dots, t_{i_m}^*)1_{U_n^{nc}}.
\end{gather*}
Here, the first equation follows again from \Cref{cor::cum_of_cond}. Moreover, we used our assumptions on the cumulants $\kappa_m^{\varphi_x}$ for the second and fourth equation. In addition, the second equation uses the traciality of~$h_n$.

Similarly, if $m=2r$ and $\underline{e}=(*,\varnothing,\dots,*,\varnothing)$ is alternating, we have
\[\kappa_{m}^{E} (\alpha_n(t_{i_1}^*), \dots , \alpha_n(t_{i_m}))=\kappa_m^{\varphi_x} (t_{i_1}^*,\dots, t_{i_m})1_{U_n^{nc}}.\]
Finally, we have shown that the equality
\[\kappa_{m}^{E} \big(\alpha_n\big(t_{i_1}^{e_1}\big), \dots , \alpha_n\big(t_{i_m}^{e_m}\big)\big)=\kappa_m^{\varphi_x} \big(t_{i_1}^{e_1},\dots ,t_{i_m}^{e_m}\big)1_{U_n^{nc}}\]
is always true, which means that $E\circ \alpha_n=\varphi_x 1_{U_n^{nc}}$.

Implication $(2)\rightarrow (3)$:
If $E_{h_n}^{h_n \ast \varphi_x}\circ \alpha_n=\varphi_x 1_{U_n^{nc}}$, we have just to apply $h_n$ in order to get $(h_n \ast \varphi_x)\circ \alpha_n=\varphi_x$.
\end{proof}

In direct comparison with Curran's de Finetti theorem \cite{curran} for the unitary quantum group~$U_n^+$, we observe that our de Finetti theorem has a characterization of distributional invariance of \emph{finite} sequences~-- whereas in Curran's de~Finetti theorem, we only have a characterization of \emph{infinite} sequences.

\begin{Example} \label{ex-DeFin1}
Let us give several examples of sequence that satisfy the conditions of the de Finetti \Cref{thm-DeFin1}.
\begin{enumerate}\itemsep=0pt
\item As a finite sequence, we can take the elements of the first column of the matrix of generators of $\operatorname{Pol}(U_n^{\rm nc})$, i.e., $x_i=u_{i1}$, $i=1,\dots,n$, equipped with the Haar trace of $U_n^{\rm nc}$. It follows from \Cref{cor_CU15} that the distribution of $(x_i)_{i=1,\dots,n}$ satisfies the first condition of \Cref{thm-DeFin1} and consequently is invariant under the dual group action $\alpha_n$.
\item A sequence of free centered circular elements is invariant under the dual group action of $\operatorname{Pol}(U_n^{\rm nc})$. Conversely, if the $x_i$'s are invariant and free, then they are circular, because freeness implies vanishing of all cumulants $\kappa_{2r}(x_{i_1}^*, x_{i_1}, \dots, x_{i_r}^*, x_{i_r})$ and $\kappa_{2r}(x_{i_1}, x_{i_2}^*, x_{i_2},\dots,$ $x_{i_r}^*,x_{i_r}, x_{i_1}^*)$ unless $r=1$.
\item If the distribution of $(u_i)_{i=1,\dots,n}$ is invariant under the dual group action $\alpha_n$, and $x$ is $*$-free from $(u_i)_{i=1,\dots,n}$, then $(u_ix)_{i=1,\dots,n}$ is invariant under the dual group action $\alpha_n$. Indeed, \Cref{cum_alternating} says that the free cumulants
\[\kappa_n\big((u_{i(1)}x)^{e_1},\dots,(u_{i(n)}x)^{e_n}\big)\]
is vanishing if $(e_1,\dots,e_n)$ is not alternating, and, in the case where $(e_1,\dots,e_n)$ is alternating, \Cref{cum_of_prod} allows to says that the joint free cumulants are zero except those of type $\kappa_{2r} (x_{i_1}^*, x_{i_1}, \dots, x_{i_r}^*, x_{i_r} )$ or those of type $\kappa_{2r} (x_{i_1}, x_{i_2}^*, x_{i_2},\dots, x_{i_r}^*, x_{i_r}, x_{i_1}^* )$. Moreover, these cumulants depend only on the length $2r$.
\item If $(x_i)_{i=1,\dots,n}$ is $*$-free from $(u_{ij})_{i,j=1,\dots,n}\in U_n^{\rm nc}$, then the distribution of the tuple
\[
\Bigg(\sum_{j=1}^n u_{ij}x_j\Bigg)_{i=1,\dots,n}
\]
is invariant under the dual group action $\alpha_n$. Indeed, the distribution of $\big(\sum_{j=1}^n u_{ij}x_j\big)_{i=1,\dots,n}$ is given by $(h_n\ast \varphi_x)\circ \alpha_n$, and
\begin{align*}
 (h_n*((h_n*\varphi_x)\circ \alpha_n))\circ \alpha_n&=(h_n*h_n*\varphi_x)\circ (({\rm id}_{\operatorname{Pol}(U_n^{\rm nc})} \text{\b{$\sqcup$}} \alpha_n)\circ \alpha_n)\\
 &=(h_n*h_n*\varphi_x)\circ ((\Delta \text{\b{$\sqcup$}} {\rm id}_{\mathcal{Q}})\circ \alpha_n)
 =(h_n*\varphi_x)\circ \alpha_n,
\end{align*}
which means that the last condition of \Cref{thm-DeFin1} is satisfied.
\end{enumerate}
\end{Example}

\subsection{Invariance for finite sequences in the tracial case}\label{sectfinitetracial}

Let us now refine Theorem \ref{thm-DeFin1} in the case when our probability space $(A,\varphi)$ is tracial. We prepare the statement with the following proposition.

\begin{Proposition}\label{determinating_distribution}The $*$-distribution of a family $(x_1,\dots,x_n)$ of random variables in a noncommutative tracial probability space $(A,\varphi)$ which is invariant under the dual action $\alpha_n$ is uniquely determined by the distribution of $\sum_{i=1}^nx_i^*x_i$.
\end{Proposition}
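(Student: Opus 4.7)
My strategy is to combine \Cref{thm-DeFin1} with the cumulant-of-products formula of \Cref{cum_of_prod} to set up a triangular system relating the free cumulants of $y:=\sum_{i=1}^n x_i^*x_i$ to the free cumulants that determine the $*$-distribution of $(x_1,\dots,x_n)$. First, \Cref{thm-DeFin1} reduces invariance under $\alpha_n$ to knowledge of two families of scalar cumulants of length $2r$, each depending only on $r$. Since $\varphi$ is tracial, free cumulants are invariant under cyclic shifts of their arguments, and the type B cumulant $\kappa_{2r}(x_{i_1},x_{i_2}^*,x_{i_2},\dots,x_{i_r}^*,x_{i_r},x_{i_1}^*)$ is precisely the cyclic shift of a type A cumulant $\kappa_{2r}(x_{i_r}^*,x_{i_r},x_{i_1}^*,x_{i_1},\dots,x_{i_{r-1}}^*,x_{i_{r-1}})$. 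Hence both types take a common value $\alpha_{2r}$, and the joint $*$-distribution of $(x_1,\dots,x_n)$ is determined by the single scalar sequence $(\alpha_{2r})_{r\geq 1}$.

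Next, I would compute the free cumulants of $y$ in terms of the $\alpha_{2r}$. By multilinearity and \Cref{cum_of_prod},
\[
\kappa_r(y,\dots,y)=\sum_{i_1,\dots,i_r\in[n]}\ \sum_{\substack{\pi\in\operatorname{NC}(2r)\\ \pi\vee\sigma=1_{2r}}}\kappa_\pi[x_{i_1}^*,x_{i_1},x_{i_2}^*,x_{i_2},\dots,x_{i_r}^*,x_{i_r}],
\]
with $\sigma=\{\{1,2\},\dots,\{2r-1,2r\}\}$. The refinement condition $\{(1,2r),(2,3),\dots,(2r-2,2r-1)\}\preceq\pi$ supplied by \Cref{cum_of_prod} implies that every block of $\pi$ is a union of the $r$ pairs $\{1,2r\},\{2,3\},\dots,\{2r-2,2r-1\}$. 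Consequently each block has even size and, read in positional order, carries an alternating $*$-pattern: type A if the block contains $\{1,2r\}$, type B otherwise. When $\pi=1_{2r}$ the unique block produces the cumulant $\kappa_{2r}(x_{i_1}^*,x_{i_1},\dots,x_{i_r}^*,x_{i_r})=\alpha_{2r}$ for every index choice, contributing $n^r\alpha_{2r}$. Any other $\pi$ has at least two blocks, so each block has size strictly less than $2r$; the product $\kappa_\pi$ is then a product of $\alpha_{2s}$'s with $s<r$ (or zero, depending on whether the indices satisfy the cyclic constraint pinning down the nonzero cumulants), and summing over admissible indices yields a polynomial $Q_r(\alpha_2,\dots,\alpha_{2(r-1)};n)$ in the lower-order cumulants and $n$.

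We thus obtain the triangular system
\[
\kappa_r(y,\dots,y)=n^r\alpha_{2r}+Q_r(\alpha_2,\dots,\alpha_{2(r-1)};n),\qquad r\geq 1,
\]
from which $(\alpha_{2r})_{r\geq 1}$ is recovered by induction on $r$, using only the free cumulants (equivalently, the moments) of the self-adjoint element $y$. This shows that the $*$-distribution of $(x_1,\dots,x_n)$ is determined by the distribution of $y$. The main obstacle is the combinatorial bookkeeping in the middle step: one must verify that every valid $\pi$ does have even-length blocks with alternating $*$-pattern, that each block's pattern is exactly of type A or type B as claimed, and that the number of admissible index tuples $(i_1,\dots,i_r)$ is a polynomial in $n$ so that $Q_r$ really is polynomial. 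Once these combinatorial facts are in place, the triangularity and the induction are immediate.
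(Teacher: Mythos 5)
Your proposal is correct and follows essentially the same route as the paper: reduce via \Cref{thm-DeFin1} to the two scalar cumulant sequences, identify them using traciality, then apply \Cref{cum_of_prod} to $\sum_i x_i^*x_i$ and invert the resulting triangular system $\kappa_r(y,\dots,y)=n^r\alpha_{2r}+(\text{lower-order terms})$ by induction. The only difference is that you spell out the combinatorial bookkeeping (blocks being unions of the pairs $\{1,2r\},\{2,3\},\dots$, hence even and alternating) which the paper delegates to the argument of \cite[Proposition~15.6]{nica+speicher}.
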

\begin{proof}
Note first that by \Cref{thm-DeFin1}, the $*$-distribution of $(x_1,\dots,x_n)$ is uniquely determined by the sequences of cumulants $(\alpha_m)_{m\geq 1}$, and $(\beta_m)_{m\geq 1}$ where
\[\alpha_m:=\kappa_{2m}(x_{i_1}^*,x_{i_1},\dots,x_{i_m}^*,x_{i_m}),\qquad m\geq 1\]
and
\[\beta_m:=\kappa_{2m}(x_{i_1},\dots,x_{i_m}^*,x_{i_m},x_{i_1}^*),\qquad m\geq 1.\]
However, by traciality, we have that $(\alpha_m)_{m\geq 1}=(\beta_m)_{m\geq 1}$.

Thanks to \Cref{cum_of_prod}, let us compute,
\begin{align*}
 \kappa_{m}\left(\sum_{i=1}^nx_i^*x_i,\dots, \sum_{i=1}^nx_i^*x_i\right)&=\sum_{i_1,\dots,i_m=1}^n \kappa_{2m}(x_{i_1}^*x_{i_1},\dots, x_{i_m}^*x_{i_m})\\
 &=\sum_{i_1,\dots,i_m=1}^n\sum_{\substack{\pi\in \operatorname{NC}(2m)\\\pi \vee \sigma=1_{2m}}}\kappa_\pi(x_{i_1}^*,x_{i_1},\dots, x_{i_m}^*,x_{i_m}),
\end{align*}
 with $\sigma=\left\{\{1,2\}, \dots, \{2m-1, 2m\}\right\}$. Exactly as in the proof of \cite[Proposition~15.6]{nica+speicher}, all the cumulants appearing in the sum are of the form $\alpha_r$ (or $\beta_r$ which is in our tracial case the same as $\alpha_r$) for $r\leq m$. In fact, by taking out the term $\pi=1_{2m}$, we have more precisely
 \begin{gather*}
 \kappa_{m}\left(\sum_{i=1}^nx_i^*x_i,\dots, \sum_{i=1}^nx_i^*x_i\right)
=n^{m}\alpha_{m}
 +\sum_{i_1,\dots,i_m=1}^n \ \sum_{\substack{\pi\in \operatorname{NC}(2m)\setminus \{1_{2m}\}\\\pi \vee \sigma=1_{2m}}}\kappa_\pi(x_{i_1}^*,x_{i_1},\dots, x_{i_m}^*,x_{i_m}),
\end{gather*}
where all the cumulants appearing in last sum are of the form $\alpha_r$ for $r< m$. This formula can be inductively resolved for $(\alpha_m)_{m\geq 1}$ in terms of the cumulants of $\sum_{i=1}^nx_i^*x_i$, which shows that the $*$-distribution of $(x_1,\dots,x_n)$ is uniquely determined by the distribution of $\sum_{i=1}^nx_i^*x_i$.
\end{proof}

\begin{Proposition}Let us consider the $*$-algebra $\mathcal{S}_{n-1}^{\rm nc}$ defined by the quotient of $\mathcal{Q}_n$ by the relation $\sum_{i=1}^nt_i^*t_i=1$. There exists a unique tracial $*$-distribution on $\mathcal{S}_{n-1}^{\rm nc}$ which is invariant under the dual action $\alpha_n$.
\end{Proposition}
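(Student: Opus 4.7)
The plan is to split into uniqueness and existence, using the two tools that are already on the table: Proposition~\ref{determinating_distribution} for uniqueness, and the first column of the Brown algebra (treated in Example~\ref{ex-DeFin1}(1)) for existence.

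For uniqueness, I would argue as follows. Suppose $\varphi$ is a tracial state on $\mathcal{Q}_n$ which vanishes on the ideal generated by $\sum_{i=1}^{n} t_i^* t_i - 1$ and which is invariant under $\alpha_n$. By Proposition~\ref{determinating_distribution}, the $*$-distribution of $(t_1,\dots,t_n)$ under $\varphi$ is determined by the distribution of $\sum_{i=1}^n t_i^* t_i$; but under our relation this element equals $1$, whose distribution is $\delta_1$. Concretely, using the last display in the proof of Proposition~\ref{determinating_distribution},
\[
\kappa_m\!\left(\sum_i t_i^* t_i, \dots, \sum_i t_i^* t_i\right) = n^m \alpha_m + (\text{polynomial in } \alpha_r,\ r<m),
\]
and the left-hand side equals $1$ for $m=1$ and $0$ for $m\geq 2$, so the sequence $(\alpha_m)_{m\geq 1}$ is uniquely recovered by induction. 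Traciality then identifies $(\beta_m)_{m\geq 1}$ with $(\alpha_m)_{m\geq 1}$, and Theorem~\ref{thm-DeFin1}(1) determines every mixed $*$-cumulant of $(t_1,\dots,t_n)$, hence every $*$-moment.

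For existence, I would produce an explicit model by taking the first column of the fundamental unitary of $\operatorname{Pol}(U_n^{\mathrm{nc}})$, that is $x_i := u_{i1}$ for $i=1,\dots,n$, endowed with the Haar trace $h_n$ of \cite{cebron+ulrich}. The Brown algebra relation $\sum_{\ell=1}^n u^*_{\ell 1} u_{\ell 1} = 1$ shows that $(x_1,\dots,x_n)$ satisfies $\sum_i x_i^* x_i = 1$, so its joint $*$-distribution factors through the quotient $\mathcal{S}_{n-1}^{\mathrm{nc}}$. Traciality comes for free from the fact that $h_n$ is a trace, and invariance under $\alpha_n$ is exactly Example~\ref{ex-DeFin1}(1), whose verification reduces to Corollary~\ref{cor_CU15} and Theorem~\ref{thm-DeFin1}.

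I do not anticipate a real obstacle here: the content of the proposition is almost entirely a bookkeeping consequence of earlier results, with the only mild subtlety being the observation that Proposition~\ref{determinating_distribution} can be read ``in reverse'', letting the prescribed distribution of $\sum_i t_i^* t_i$ pin down the whole invariant tracial $*$-distribution. Once that observation is made, the first column of the Brown algebra equipped with $h_n$ supplies the existence half without further computation.
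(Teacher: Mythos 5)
Your proposal is correct and follows exactly the paper's own argument: uniqueness is obtained from \Cref{determinating_distribution} (since the relation $\sum_{i}t_i^*t_i=1$ forces the distribution of $\sum_i t_i^*t_i$ to be $\delta_1$), and existence is witnessed by the first column $(u_{i1})_{1\le i\le n}$ of the Brown algebra with the Haar trace, invoking \Cref{ex-DeFin1}. The extra detail you supply on inductively recovering the cumulants $(\alpha_m)_{m\ge1}$ is a harmless elaboration of the same proof.
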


\begin{proof}
The uniqueness is due to the last proposition. The existence is due to the fact that the $*$-dis\-tribution of the first column $(u_{i1})_{1\leq i \leq n}$ is such an example of $*$-distribution, see Ex\-am\-ple~\ref{ex-DeFin1}.
\end{proof}

When the $*$-distribution of a family $(x_1,\dots,x_n)$ such that $\sum_{i=1}^nx_i^*x_i=1$ follows this particular $*$-distribution, we say that it is a \emph{freely uniform unit vector} of random variables. For example, the elements of one of the columns of the matrix of generators of $\operatorname{Pol}(U_n^{\rm nc})$, i.e., $x_i=u_{ik}$, $i=1,\dots,n$ and $k$ fixed, equipped with the Haar trace of $U_n^{\rm nc}$, is a freely uniform unit vector.

We will now prove two versions of a finite de Finetti theorem for tracial probability spaces $(A,\varphi)$.

\begin{Proposition}Let $(x_1,\dots,x_n)$ be random variables in a tracial probability space $(A,\varphi)$. Then the following statements are equivalent.
\begin{enumerate}\itemsep=0pt
 \item[$1.$] The $*$-distribution of $(x_1,\dots,x_n)$ is invariant under the dual action $\alpha_n$.
 \item[$2.$] The tuple $(x_1,\dots,x_n)$ has the same $*$-distribution as $(u_1x,\dots,u_nx)$ where $(u_1,\dots,u_n)$ is a freely uniform unit vector, $x$ is self-adjoint and $(u_1,\dots,u_n)$ and $x$ are $*$-free.
\end{enumerate}
In this case, $x^2$ and $\sum_{i=1}^nx_i^*x_i$ are identically distributed. More generally, the distribution of $x$ can be taken as any distribution such that $x^2$ and $\sum_{i=1}^nx_i^*x_i$ are identically distributed.\label{prop:realization}
\end{Proposition}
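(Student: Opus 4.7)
\emph{Proof plan.} The implication $(2)\Rightarrow(1)$ is essentially immediate from Example~\ref{ex-DeFin1}(3): by definition a freely uniform unit vector has $*$-distribution invariant under $\alpha_n$, and Example~\ref{ex-DeFin1}(3) asserts that right-multiplying an invariant tuple by a $*$-free element yields an invariant tuple. Hence $(u_1x,\dots,u_nx)$ is invariant, and so is $(x_1,\dots,x_n)$, since the two tuples share the same $*$-distribution by assumption.

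The substance of the proof is the converse $(1)\Rightarrow(2)$, and the strategy is to exhibit a model of the desired form with the same value of the distributional invariant singled out by Proposition~\ref{determinating_distribution}. Set $y:=\sum_{i=1}^{n} x_i^*x_i$, a positive element of $A$, and choose a self-adjoint element $x$ in some tracial probability space whose square has the same distribution as $y$ (such an $x$ exists up to $*$-distribution: realise the distribution of $y$ on a single-generator tracial $*$-probability space and take $x$ to be the corresponding square root). Next, take a freely uniform unit vector $(u_1,\dots,u_n)$ and realise it together with $x$ as $*$-free subfamilies inside a common tracial probability space, using the free product of tracial states (which is tracial).

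By the first step, the tuple $(u_1x,\dots,u_nx)$ is then invariant under $\alpha_n$. The defining relation $\sum_{i=1}^{n} u_i^*u_i = 1$ of $\mathcal{S}_{n-1}^{\rm nc}$, combined with $x=x^*$, gives
\[
\sum_{i=1}^{n} (u_ix)^*(u_ix) \;=\; x\Bigg(\sum_{i=1}^{n} u_i^*u_i\Bigg)x \;=\; x^2,
\]
which by construction has the same distribution as $y=\sum_{i=1}^{n} x_i^*x_i$. Since both tuples are invariant under $\alpha_n$ and have identically distributed ``sum of squares'', Proposition~\ref{determinating_distribution} forces their full $*$-distributions to coincide, proving $(1)\Rightarrow(2)$. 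The ``moreover'' statement is visible directly from the construction: any self-adjoint $x$ satisfying $x^2\sim y$ can be inserted into the model with the same outcome. The only mildly technical step is the existence of a free-product realisation with the prescribed self-adjoint distribution of $x$; this is standard for tracial states, so I expect no serious obstacle.
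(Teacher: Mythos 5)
Your proposal is correct and follows essentially the same route as the paper: $(2)\Rightarrow(1)$ via Example~\ref{ex-DeFin1}, and $(1)\Rightarrow(2)$ by building a $*$-free model $(u_1x,\dots,u_nx)$ with $x^2$ distributed as $\sum_{i=1}^n x_i^*x_i$, computing $\sum_i(u_ix)^*(u_ix)=x^2$, and invoking Proposition~\ref{determinating_distribution} to identify the two $*$-distributions. The only cosmetic difference is that you realise the model in a free product of tracial spaces rather than "enlarging $(A,\varphi)$" as the paper does, which changes nothing of substance.
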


\begin{proof}Implication $(2)\rightarrow (1)$:
It is just an application of \Cref{ex-DeFin1}.

Implication $(1)\rightarrow (2)$:
By enlarging $(A,\varphi)$ if necessary, we consider a freely uniform unit vector $(u_1,\dots,u_n)$ and a~self-adjoint variable $x$, free from $(u_1,\dots,u_n)$, and such that $x^2$ and $\sum_{i=1}^nx_i^*x_i$ are identically distributed.
Thanks to \Cref{ex-DeFin1}, the family $(u_1x,\dots,u_nx)$ is invariant under the dual action $\alpha_n$ and the distribution of $\sum_{i=1}^n (u_ix)^*u_ix =x^2$ is the one of $\sum_{i=1}^nx_i^*x_i$. As a consequence of \Cref{determinating_distribution}, the families $(u_1x,\dots,u_nx)$ and $(x_1,\dots,x_n)$ have the same $*$-distribution.
\end{proof}

If the square root $\sqrt{\sum_{i=1}^nx_i^*x_i}$ exists, for example in a tracial $C^*$-probability space $(A,\varphi)$, the distribution of $x$ can be taken as the distribution of $\sqrt{\sum_{i=1}^nx_i^*x_i}$.

\begin{Corollary}\label{cor-finite}
Let $(x_1,\dots,x_n)$ be random variables in a tracial $W^*$-probability space $(A,\varphi)$ such that $\sum_{i=1}^nx_i^*x_i$ has a trivial kernel. Then the following statements are equivalent.
\begin{enumerate}\itemsep=0pt
 \item[$1.$] The $*$-distribution of $(x_1,\dots,x_n)$ is invariant under the dual action $\alpha_n$.
 \item[$2.$] We have the decomposition $(x_1,\dots,x_n)=(u_1x,\dots,u_nx)$ where $(u_1,\dots,u_n)$ is a freely uniform unit vector in $A$, which is $*$-free from $x:=\sqrt{\sum_{i=1}^nx_i^*x_i}$.
\end{enumerate}
\end{Corollary}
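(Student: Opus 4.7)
The implication $(2)\Rightarrow(1)$ is immediate: the decomposition in $(2)$ exhibits $(x_1,\dots,x_n)$ as $(u_1x,\dots,u_nx)$, and by \Cref{prop:realization} any such tuple is invariant under the dual action $\alpha_n$. So the real content is in $(1)\Rightarrow(2)$.

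For $(1)\Rightarrow(2)$, my plan is to construct the $u_i$ concretely by polar decomposition inside $A$ and then to identify the joint $*$-distribution of $(u_1,\dots,u_n,x)$ by transferring it from the abstract model provided by \Cref{prop:realization}. Since $(A,\varphi)$ is a $W^*$-probability space, continuous functional calculus produces $x:=\sqrt{\sum_{i=1}^n x_i^*x_i}\in A$. By hypothesis $\ker x=\{0\}$, so the support projection of $x$ is $1_A$. Viewing $X:=(x_1,\dots,x_n)^t$ as a column in $M_{n,1}(A)\subseteq M_n(A)$ and taking its polar decomposition $X=W|X|$ inside the von Neumann algebra $M_n(A)$, one obtains $|X|=\sqrt{X^*X}=x$ and $W=(u_1,\dots,u_n)^t$ with entries $u_i\in A$. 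This yields $x_i=u_ix$ for every $i$, and $W^*W=\sum_{i=1}^n u_i^*u_i$ equals the support of $x$, which is $1_A$.

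To identify the joint $*$-distribution of $(u_1,\dots,u_n,x)$, I would invoke \Cref{prop:realization} in a tracial $W^*$-probability enlargement to produce a freely uniform unit vector $(v_1,\dots,v_n)$ and a positive self-adjoint $y$, $*$-free from the $v_j$'s, with $y^2$ distributed as $\sum x_i^*x_i$, such that $(v_1y,\dots,v_ny)$ has the same $*$-distribution as $(x_1,\dots,x_n)$. The resulting trace-preserving normal $*$-isomorphism $\rho\colon W^*(v_1y,\dots,v_ny)\to W^*(x_1,\dots,x_n)$ sends $v_iy$ to $x_i$; by continuous functional calculus $\rho(y)=x$. Since $y$ also has trivial kernel, repeating the polar-decomposition step on the source side produces $v_i\in W^*(v_1y,\dots,v_ny)$ as the entries of the corresponding partial isometry. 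Because $\rho$ amplifies coordinatewise to a normal $*$-isomorphism on $M_n(\,\cdot\,)$, and normal $*$-isomorphisms preserve polar decompositions by uniqueness, one concludes $\rho(v_i)=u_i$. Thus $(u_1,\dots,u_n,x)$ and $(v_1,\dots,v_n,y)$ share the same joint $*$-distribution, so $(u_1,\dots,u_n)$ is a freely uniform unit vector $*$-free from $x$, as required.

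The main technical point I anticipate is this transfer of the polar decomposition through $\rho$, which is where the trivial-kernel hypothesis earns its keep: it forces $W^*W=1_A$ rather than a proper subprojection, so that the partial isometries on both sides are uniquely determined by $u_ix=x_i$ and $v_iy=v_iy$ respectively, and the amplified $\rho$ must identify them. Everything else is routine bookkeeping about functional calculus and normal $*$-isomorphisms of tracial $W^*$-algebras.
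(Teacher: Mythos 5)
Your argument is correct, and its skeleton matches the paper's: for $(2)\Rightarrow(1)$ you invoke the realization result (the paper cites \Cref{ex-DeFin1}, which is the same content), and for $(1)\Rightarrow(2)$ you produce elements $u_i\in A$ with $x_i=u_ix$ and then identify the joint $*$-distribution of $(u_1,\dots,u_n,x)$ with that of the abstract model $(\tilde u_1,\dots,\tilde u_n,\tilde x)$ from \Cref{prop:realization} via the isomorphism $x_i\mapsto \tilde u_i\tilde x$ determined by equality of distributions. Where you genuinely diverge is in the technical device: the paper inverts $x$ in the algebra of affiliated operators, sets $u_i:=x_ix^{-1}$, extends the isomorphism to (possibly unbounded) affiliated operators, and only concludes boundedness of the $u_i$ at the very end; you instead stay entirely within bounded operators by taking the polar decomposition of the column $(x_1,\dots,x_n)^t$ in $M_n(A)$, using the trivial-kernel hypothesis to make the initial projection equal to $1$, and then transporting the polar decomposition through the amplified normal $*$-isomorphism by uniqueness. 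Your route buys a cleaner handling of boundedness (the $u_i$ lie in $A$ from the outset, and one sees directly that the $v_i$ belong to $W^*(v_1y,\dots,v_ny)$), at the cost of the matrix-amplification and uniqueness-of-polar-decomposition bookkeeping; the paper's route is shorter to state and yields the same operators, since your $u_i$ agree with $x_ix^{-1}$ on the dense range of $x$. The only point you should make explicit is that $y$ (your positive model element) has trivial kernel: this follows because $y^2$ has the same distribution as $\sum_{i=1}^nx_i^*x_i$ with respect to a faithful normal trace, so its kernel projection has trace zero and hence vanishes; with that remark added, the proof is complete.
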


\begin{proof}
Implication $(2)\rightarrow (1)$: It is just an application of \Cref{ex-DeFin1}.

Implication $(1)\rightarrow (2)$:
 The fact that $x=\sqrt{\sum_{i=1}^nx_i^*x_i}$ has a trivial kernel implies that we can invert it (in the algebra of affiliated operators) and we can define $u_i:=x_i\cdot x^{-1}$ in such a way that $x_i=u_ix$. It remains to prove that $u_i$ and $x$ are $*$-free with the $*$-distribution announced.

 Let $(\tilde{u}_1\tilde{x},\dots,\tilde{u}_n\tilde{x})$ be the realization of the $*$-distribution of $(x_1,\dots,x_n)$ (not necessarily in~$A$) appearing in \Cref{prop:realization} with $\tilde{x}$ positive. But this means that the von Neumann algebra generated by $(x_1,\dots,x_n)$ is isomorphic to the von Neumann algebra generated by $(\tilde{u}_1\tilde{x},\dots,\tilde{u}_n\tilde{x})$ via the mapping $x_i\mapsto \tilde{u}_i\tilde{x}$. We extend this mapping to the algebra of affiliated operators (not necessarily bounded). The image of $x$ is $\sqrt{\sum_{i=1}^n(\tilde{u}_i\tilde{x})^*\tilde{u}_i\tilde{x}}=\tilde{x}$ and the image of $u_i=x_i\cdot x^{-1}$ is $\tilde{u}_i\tilde{x}\cdot x^{-1}$. As a consequence, the $*$-distribution of $(u_1,\dots,u_n, x)$ is the $*$-distribution of $(\tilde{u}_1,\dots,\tilde{u}_n, \tilde{x})$: they are bounded, and $(u_1,\dots,u_n)$ is a freely uniform unit vector in $A$, which is $*$-free from $x:=\sqrt{\sum_{i=1}^nx_i^*x_i}$.
\end{proof}

Note that the condition on the kernel $\sum_{i=1}^nx_i^*x_i$ can not be avoided if we want to define the unit vector $(u_1,\dots,u_n)$ in $A$. For example, the vector $(0,0,\dots,0)$ is invariant under the dual action $\alpha_n$ even if the tracial $W^*$-probability space $(A,\varphi)$ does not contain any freely uniform unit vector $(u_1,\dots,u_n)$.

\section{Infinite de Finetti theorems for dual group actions}

We now turn to the characterization of infinite sequences, building on our finite de Finetti Theorem~\ref{thm-DeFin1}. We will prove several variants: a general case, as a direct consequence of our finite de Finetti theorem; a version adapted to von Neumann algebras, i.e., to $W^*$-probability spaces; and, as in the previous section, a strengthening in the tracial case.

\subsection{Invariance for infinite sequences in the general case}
\begin{Definition}\label{def-inv-dg}
Let $(x_i)_{i \in \mathbb{N}}$ be a sequence of random variables in a noncommutative probability space $(A, \varphi)$. The distribution $\varphi_x$ of $(x_i)_{i \in \mathbb{N}}$ is said to be \emph{invariant under the dual action of $U^{\rm nc}$}, if, for any $n \ge 1$, $(x_1, \dots, x_n)$ is invariant under $\alpha_n$.
\end{Definition}

As a direct consequence of our finite de Finetti Theorem~\ref{thm-DeFin1}, we obtain the following characterization of infinite sequences under the action of the Brown algebra.

\begin{Theorem}\label{thm-infinitedual}
Let $(x_i)_{i \in \mathbb{N}}$ be a sequence of random variables in a noncommutative probability space $(A, \varphi)$. The following are equivalent:
\begin{enumerate}\itemsep=0pt
\item[$1.$]The distribution $\varphi_x$ of $(x_i)_{i \in \mathbb{N}}$ is invariant under the dual action of $U^{\rm nc}$. 
\item[$2.$] The sequence $(x_i)_{i \in \mathbb{N}}$ is composed of $R$-diagonal elements such that the joint free cumulants are zero except those of type $\kappa_{2r}(x_{i_1}^*, x_{i_1}, \dots, x_{i_r}^*, x_{i_r})$ and $\kappa_{2r}(x_{i_1}, x_{i_2}^*, x_{i_2},\dots, x_{i_r}^*, x_{i_r}, x_{i_1}^*)$. Moreover, these cumulants depend only on the length $2r$. 
\end{enumerate}
\end{Theorem}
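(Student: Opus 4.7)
The plan is to deduce \Cref{thm-infinitedual} as a direct corollary of the finite de Finetti \Cref{thm-DeFin1} applied to each initial segment $(x_1,\dots,x_n)$. Since the joint free cumulants of a subsequence agree with the restriction of the joint free cumulants of the whole sequence (both are computed from the same state $\varphi$ via the same moment-cumulant formula), conditions of type (1) in \Cref{thm-DeFin1} for every $n$ are equivalent to condition (2) of \Cref{thm-infinitedual}, and \Cref{def-inv-dg} translates dual-action invariance on the sequence into $\alpha_n$-invariance of each initial segment.

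More concretely, for the implication $(1)\Rightarrow (2)$, I would fix any cumulant $\kappa_m^{\varphi}\bigl(x_{i_1}^{e_1},\dots,x_{i_m}^{e_m}\bigr)$ and let $n=\max(i_1,\dots,i_m)$. By \Cref{def-inv-dg} the family $(x_1,\dots,x_n)$ is invariant under $\alpha_n$, hence by the implication $(2)\Rightarrow (1)$ of \Cref{thm-DeFin1} the cumulants of this finite family have exactly the prescribed form: they vanish unless the arguments are alternating, and they are concentrated on the two cyclic patterns $\kappa_{2r}(x^*_{i_1},x_{i_1},\dots,x^*_{i_r},x_{i_r})$ and $\kappa_{2r}(x_{i_1},x^*_{i_2},x_{i_2},\dots,x^*_{i_r},x_{i_r},x^*_{i_1})$, with a value depending only on $2r$. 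Since the arbitrary cumulant we started from appears among these, (2) follows.

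For the implication $(2)\Rightarrow (1)$, fix any $n\ge 1$. The assumption (2) for the infinite sequence restricts in particular to the finite family $(x_1,\dots,x_n)$, yielding condition (1) of \Cref{thm-DeFin1} for this finite family. The implication $(1)\Rightarrow (2)$ of \Cref{thm-DeFin1} then gives invariance of $(x_1,\dots,x_n)$ under $\alpha_n$. Since this holds for every $n$, \Cref{def-inv-dg} yields invariance of $(x_i)_{i\in\mathbb{N}}$ under the dual action of $U^{\rm nc}$.

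There is essentially no obstacle here beyond bookkeeping: the only point to be careful about is that the numerical value of $\kappa_{2r}$ on the prescribed alternating patterns is a single constant depending only on $2r$ (not on $n$ or on the chosen indices $i_1,\dots,i_r$). This is automatic because the cumulants are intrinsic to $(A,\varphi)$ and do not depend on whether we think of $(x_{i_1},\dots,x_{i_r})$ as a subfamily of $(x_1,\dots,x_n)$ or of the whole infinite sequence; the constant produced by \Cref{thm-DeFin1} for each $n$ must therefore coincide across all $n$.
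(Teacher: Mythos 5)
Your proposal is correct and follows essentially the same route as the paper: the paper's proof likewise derives the infinite statement directly from \Cref{thm-DeFin1} applied to each initial segment $(x_1,\dots,x_n)$, using that invariance of the infinite sequence means $\alpha_n$-invariance for every $n$ (\Cref{def-inv-dg}) and that the cumulants are intrinsic to $(A,\varphi)$. Your extra remark that constancy of the cumulant values across indices follows by embedding any two index choices in a common large $n$ is exactly the (implicit) bookkeeping the paper relies on.
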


\begin{proof}The equivalence between the first two conditions is a direct consequence of \Cref{thm-DeFin1}.

If the first condition is true, for any $n \ge 1$, $(x_1, \dots, x_n)$ is invariant under $\alpha_n$. As a consequence, for any $n \ge 1$, the second condition holds for $(x_1, \dots, x_n)$, which implies that the second condition is true since $n$ can be as large as wanted.

Conversely, if the second condition is true, we have in particular that, for any $n \ge 1$, $(x_1, \dots, x_n)$ is invariant under $\alpha_n$.
\end{proof}

\subsection{A technical lemma on actions on infinitely many variables}

We extend the action $\alpha_n$ from Definition \ref{finitedualaction} to the infinite situation as follows.
Let $(x_i)_{i \in \mathbb{N}}$ be a sequence of random variables in a noncommutative probability space $(A, \varphi)$.
Define $\mathcal{Q}_\infty$ as the $*$-algebra of noncommutative polynomials with infinitely many variables $(t_j)_{j\geq 1}$ and complex coefficients.

Define $\beta_n \colon \mathcal{Q}_\infty \to \operatorname{Pol}(U^{\rm nc}_n) \sqcup \mathcal{Q}_\infty$ as the unital $*$-homomorphism satisfying
\[
\beta_{n}(t_i)=
\begin{cases}\displaystyle \sum_{j = 1}^n u_{ij} t_j & \text{if } 1 \leq i \leq n,\\
t_i & \text{if } i > n.
\end{cases}
\]
\begin{Lemma}\label{beta_invariance}If the distribution $\varphi_x$ of $(x_i)_{i \in \mathbb{N}}$ is invariant under the dual action of $U^{\rm nc}$, then, for all $n\geq 1,$
\[E_{h_n}^{h_n * \varphi_x}\circ \beta_n=\varphi_x.\]
\end{Lemma}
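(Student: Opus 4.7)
The plan is to adapt the strategy of the proof of the implication $(1)\Rightarrow(2)$ of Theorem~\ref{thm-DeFin1}. Setting $E:=E_{h_n}^{h_n*\varphi_x}$, the lemma amounts to $E(\beta_n(p))=\varphi_x(p)\cdot 1_{U_n^{{\rm nc}}}$ for every monomial $p=t_{i_1}^{e_1}\cdots t_{i_m}^{e_m}\in\mathcal{Q}_\infty$. By the operator-valued moment-cumulant formula, combined with the observation that a nested operator-valued cumulant whose block cumulants are all scalar multiples of $1_{U_n^{{\rm nc}}}$ is itself the product of those scalars times $1_{U_n^{{\rm nc}}}$ (by the $\operatorname{Pol}(U_n^{{\rm nc}})$-bimodularity of $\kappa^E$), it suffices to prove, for every sub-tuple of positions, the scalar-valued identity
\[
\kappa^E_r\bigl(\beta_n(t_{i_{j_1}}^{e_{j_1}}),\ldots,\beta_n(t_{i_{j_r}}^{e_{j_r}})\bigr)=\kappa^{\varphi_x}_r\bigl(t_{i_{j_1}}^{e_{j_1}},\ldots,t_{i_{j_r}}^{e_{j_r}}\bigr)\cdot 1_{U_n^{{\rm nc}}}\,;
\]
summing both sides over all $\pi\in\operatorname{NC}(m)$ then gives the lemma.

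First, I would unpack the hypothesis using Theorem~\ref{thm-infinitedual}: under the infinite invariance assumption, the joint free cumulants of $(x_i)_{i\in\mathbb{N}}$ vanish outside the two cyclic-alternating patterns of even length described there, and depend only on that length. Next, I would expand $\beta_n(t_{i_j}^{e_j})$ as $\sum_{k_j=1}^n(u_{i_jk_j}t_{k_j})^{e_j}$ when $i_j\le n$, and as the single element $t_{i_j}^{e_j}$ when $i_j>n$. Using multilinearity of $\kappa^E_r$ together with Corollary~\ref{cor::cum_of_cond} and the $\operatorname{Pol}(U_n^{{\rm nc}})$-bimodule properties of operator-valued cumulants, I would peel off the $u$-factors, obtaining $\kappa^E_r$ as a sum, indexed by the choices of $(k_j)_{j:\,i_j\le n}$, of a scalar $\varphi_x$-cumulant in the resulting $t$-entries multiplied by $h_n$-moments of products of consecutive $u$'s, with possibly a single $u$ at the left end and a single $u$ at the right end. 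This is exactly the structure obtained in the proof of Theorem~\ref{thm-DeFin1} for $\alpha_n$, except that the fixed slots $j$ with $i_j>n$ contribute no $u$-factor at all.

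The main obstacle, and the only essentially new ingredient with respect to the finite case, is the analysis of the ``mixed'' terms, in which a scalar $\varphi_x$-cumulant in the above decomposition contains both a summation index $k_j\le n$ (coming from an expanded slot) and an index $i_l>n$ (coming from a fixed slot) within the same block. Such terms should vanish: the cyclic index-matching pattern required by Theorem~\ref{thm-infinitedual} for a cumulant to be non-zero forces an identification of the form $k_j=i_l$, which is incompatible with $k_j\le n<i_l$. Once these mixed terms are discarded, the remaining computation is a direct transcription of the one carried out for $\alpha_n$ in the proof of Theorem~\ref{thm-DeFin1}: the length-only dependence of the surviving scalar cumulants, the explicit values of the cumulants of the $u_{ij}$ given by Proposition~\ref{cor_CU15}, and the traciality of $h_n$ collapse the $k_j$-sums and produce exactly $\kappa^{\varphi_x}_r(t_{i_{j_1}}^{e_{j_1}},\ldots,t_{i_{j_r}}^{e_{j_r}})\cdot 1_{U_n^{{\rm nc}}}$, completing the argument.
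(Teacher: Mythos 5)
Your overall reduction (prove $\kappa^E_m\big(\beta_n\big(t_{i_1}^{e_1}\big),\dots,\beta_n\big(t_{i_m}^{e_m}\big)\big)=\kappa^{\varphi_x}_m\big(t_{i_1}^{e_1},\dots,t_{i_m}^{e_m}\big)1_{U_n^{\rm nc}}$ via \Cref{cor::cum_of_cond}, the cumulant structure from \Cref{thm-DeFin1}, the Haar cumulants of \Cref{cor_CU15} and traciality of $h_n$) is the paper's strategy, but the step you yourself single out as the only new ingredient is wrong. It is not true that every scalar cumulant containing both a summation index $k_j\le n$ and a fixed index $i_l>n$ vanishes: the cyclic-alternating pattern of \Cref{thm-DeFin1} forces equality only of \emph{specific adjacent pairs} of indices (for $\underline{e}=(\varnothing,*,\dots,\varnothing,*)$ the constraints are $k_2=k_3$, $k_4=k_5$, \dots, $k_{2r}=k_1$), not of all indices occurring in the cumulant; a mixed cumulant dies only when one of these forced pairs couples a summation index with a fixed index. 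Concretely, take $j\le n<N$, $\underline{e}=(\varnothing,*,\varnothing,*)$ and $(i_1,i_2,i_3,i_4)=(N,j,j,N)$. Every term in the expansion of $\kappa_4^E\big(t_N,\beta_n(t_j)^*,\beta_n(t_j),t_N^*\big)$ involves the ``mixed'' cumulants $\kappa_4^{\varphi_x}\big(t_N,t_{k}^*,t_{k},t_N^*\big)$, $k\le n$, which are of the allowed pattern $\kappa_4\big(x_{i_1},x_{i_2}^*,x_{i_2},x_{i_1}^*\big)$ and need not vanish (they are nonzero, e.g., for the invariant sequences $(c_ix)_{i\in\mathbb{N}}$ of \Cref{prop:realization_infinite} with $x$ non-scalar). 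Discarding them, as you propose, would give $\kappa_4^E=0$, contradicting the very identity you are proving, since $\kappa_4^{\varphi_x}\big(t_N,t_j^*,t_j,t_N^*\big)\neq0$ in general. So the mixed contributions must be kept; summed against the $h_n$-moments they are exactly what reproduces $\kappa^{\varphi_x}_m$ at index tuples mixing small and large indices.

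The paper sidesteps the case distinction entirely: with $N=\max(n,i_1,\dots,i_m)$ it sets $v_{ij}=u_{ij}$ for $1\le i,j\le n$ and $v_{ij}=\delta_{ij}1_{\operatorname{Pol}(U_n^{\rm nc})}$ otherwise, so that $\beta_n(t_i)=\sum_{k=1}^N v_{ik}t_k$ for every $i\in[N]$, and then transcribes the computation from the proof of \Cref{thm-DeFin1} with $v$ in place of $u$; the Kronecker-delta entries handle the fixed slots automatically, and the constrained sums such as $\sum_{k_2=k_3}h_n\big(v_{i_3k_3}v_{i_2k_2}^*\big)=\delta_{i_2i_3}$ come out uniformly whether the indices are $\le n$ or $>n$. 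If you replace your vanishing claim by this device (or by a correct case analysis that keeps the surviving mixed terms), the remainder of your argument goes through.
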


\begin{proof}We want to prove that for all $ m \geq 1$, $(i_1, \dots, i_m) \in \mathbb{N}^m$, $\underline{e} = (e_1, \dots, e_m) \in \{\varnothing, *\}^m$, we have
\[E_{h_n}^{h_n * \varphi_x}\big[\beta_n\big(t_{i_1}^{e_1}\big)\cdots \beta_n\big(t_{i_m}^{e_m}\big)\big]=\varphi_x\big[t_{i_1}^{e_1}\cdots t_{i_m}^{e_m}\big].\]
Set $N:=\max(n,i_1,\dots,i_m)$. We define $(v_{ij})_{1\leq i,j\leq N}$ by
\[
v_{ij} =
\begin{cases} u_{ij} & \text{if } 1 \leq i,j \leq n,\\
\delta_{ij} 1_{\operatorname{Pol}(U_n^{\rm nc})} & \text{if } \max\{i,j\} > n
\end{cases}
\]
in such a way that, for all $i\in [N],$ we have
\[\beta_n(t_i)=\sum_{k=1}^Nv_{ik}t_k.\]
We set $E:=E_{h_n}^{h_n \ast \varphi_x}$. Let $ m \geq 1$, $(i_1, \dots, i_m) \in [N]^m$, $\underline{e} = (e_1, \dots, e_m) \in \{\varnothing, *\}^m$.
We can use \Cref{cor::cum_of_cond} in order to compute
\begin{align*}
 \kappa_{m}^{E} \big(\beta_n\big(t_{i_1}^{e_1}\big), \dots , \beta_n\big(t_{i_m}^{e_m}\big)\big)&=\sum_{k_1, \dots, k_m \in [N]^m}\kappa_{m}^{E} \big(\big(v_{i_1k_1}t_{k_1}\big)^{e_1}, \dots , \big(v_{i_mk_m}t_{k_m}\big)^{e_m}\big)\\
 &=\sum_{k_1, \dots, k_m \in [N]^m} \kappa_m^{\varphi_x} \big(t_{k_1}^{e_1},\dots, t_{k_m}^{e_m}\big)\times (*).
\end{align*}
Using the particular form of the free cumulants $\kappa_m^{\varphi_x} \big(t_{k_1}^{e_1},\dots, t_{k_m}^{e_m}\big)$, given by \Cref{thm-DeFin1}, we see that all the terms of the sum are vanishing if $\underline{e}$ is not alternating.

Let us examine the case where $m=2r$ and $\underline{e}=(\varnothing,*,\dots,\varnothing,*)$ is alternating.
\begin{align*}
 &\kappa_{m}^{E} (\beta_n(t_{i_1}), \dots , \beta_n(t_{i_m}^*))\\
 & \quad{} =\sum_{k_1, \dots, k_{2r} \in [N]^{2r}} \kappa_m^{\varphi_x} (t_{k_1},\dots, t_{k_m}^*) h_n(v_{i_2k_2}^*v_{i_3k_3})\cdots h_n(v_{i_{2r-2}k_{2r-2}}^*v_{i_{2r-1}k_{2r-1}})v_{i_1k_1}v^*_{i_{2r}k_{2r}}\\
 & \quad{} =\kappa_m^{\varphi_x} (t_{1},\dots, t_{1}^*) \sum_{\substack{k_1=k_{2r},\\ k_2=k_3, \ldots \in [N]^{2r}}} h_n(v_{i_3k_3}v_{i_2k_2}^*)\cdots h_n(v_{i_{2r-1}k_{2r-1}}v_{i_{2r-2}k_{2r-2}}^*)v_{i_1k_1}v^*_{i_{2r}k_{2r}}\\
 & \quad{} =\kappa_m^{\varphi_x} (t_{1},\dots, t_{1}^*)\delta_{i_1=i_{2r},i_2=i_3, \dots}1_{U_n^{nc}}
 =\kappa_m^{\varphi_x} (t_{i_1},\dots, t_{i_m}^*)1_{U_n^{nc}}.
\end{align*}
Similarly, if $m=2r$ and $\underline{e}=(*,\varnothing,\dots,*,\varnothing)$ is alternating, we have
\[\kappa_{m}^{E} (\beta_n(t_{i_1}), \dots , \beta_n(t_{i_m}^*))=\kappa_m^{\varphi_x} (t_{i_1},\dots, t_{i_m}^*)1_{U_n^{nc}}.\]
Finally, we have shown that the equality
\[\kappa_{m}^{E} \big(\beta_n\big(t_{i_1}^{e_1}\big), \dots , \beta_n\big(t_{i_m}^{e_m}\big)\big)=\kappa_m^{\varphi_x} \big(t_{i_1}^{e_1},\dots ,t_{i_m}^{e_m}\big)1_{U_n^{nc}}\]
is always true, which means that $E\circ \beta_n=\varphi_x 1_{U_n^{nc}}.$
\end{proof}

\subsection{Invariance for infinite sequences for von Neumann algebras}
We now put more structure on our noncommutative probability space, passing to $W^*$-probability spaces, and we prove a de Finetti theorem in this situation.

Let $(x_i)_{i\in\mathbb{N}}$ be an infinite sequence of random variables in some $W^*$-probability space $(M,\varphi)$ with faithful state.
Set $\mathcal{B}_0:=W^*(x_1,x_2,\dots)$. More generally, we set \[\mathcal{B}_n:=W^*\Bigg(\sum_{j=1}^nx_j^*x_j,x_{n+1},x_{n+2},\dots\Bigg).\]
We have $\mathcal{B}_{n+1}\subset \mathcal{B}_{n}$, and we set
\[\mathcal B_\infty:=\bigcap_{n\geq 1} \mathcal{B}_n.\]
We define $(\tilde{\beta_n}(x_i))_{i\geq 1}$ elements of the $W^*$-probability space $({W^*}(U^{\rm nc}_n) * \mathcal{B}_0,h_n* \varphi)$ by
\[
\tilde{\beta_n}(x_i)=
\begin{cases}\displaystyle \sum_{j = 1}^n u_{ij} x_j & \text{if } 1 \leq i \leq n,\\
x_i & \text{if } i > n.
\end{cases}
\]
By \cite[Theorem~IX.4.2]{Takesaki}, there exists a unique $\varphi$-preserving conditional expectation
\[
E^{h_n * \varphi}_\varphi\colon \ {W^*}(U^{\rm nc}_n) * \mathcal{B}_0\to \mathcal{B}_0.
\]

\begin{Lemma}\label{cum_aux}
We set $E:=E_{\varphi}^{h_n \ast \varphi}$.
For all $n\geq 1,$ $m\geq 1$, $(i_1,\dots,i_m)\in \mathbb{N}^m $, $(e_1, \dots, e_m) \in \{\varnothing, *\}^m$ and $b_1\dots b_m\in \mathcal{B}_n$,
\[\kappa^{E}_m\big[\tilde{\beta_n}(x_{i_1})^{e_1}b_1, \dots , b_{m-1} \tilde{\beta_n}(x_{i_m})^{e_m}\big]\]
is in $\mathcal{B}_n$. In the case where $n\geq \max(i_1,\dots,i_m)$, we have more precisely the following:
\begin{itemize}\itemsep=0pt
 \item If $\underline{e}$ is not alternating, the cumulant is vanishing.
\item If $m$ is even and $\underline{e}=(\varnothing,*,\dots,\varnothing,*)$, we have
\begin{align*}
 &\kappa_{m}^{E} \big(\tilde{\beta_n}(x_{i_1})b_1, \dots , b_{m-1} \tilde{\beta_n}(x_{i_m})^*\big)\\
 &\qquad{}=(-1)^{m/2-1}n^{1-m/2}C_{m/2-1}\delta_{i_1=i_{m},i_2=i_3, \dots} \varphi(x_{1}b_1x_{1}^*) \varphi(b_2) \cdots \varphi(x_{1}b_{m-1}x_{1}^*).
\end{align*}
\item If $m$ is even and $\underline{e}=(*,\varnothing,\dots,*,\varnothing)$, we have
\begin{align*}&\kappa_{m}^{E} \big(\tilde{\beta_n}(x_{i_1})^*b_1, \dots , b_{m-1} \tilde{\beta_n}(x_{i_m})\big)\\
&\qquad{}=(-1)^{m/2-1}n^{-m/2}C_{m/2-1}\delta_{i_1=i_{2},i_3=i_4, \dots} \\
&\qquad\quad{} \times \varphi(b_1) \varphi(x_{1}b_2x_{1}^*)\cdots \varphi(x_{1}b_{m-2}x_{1}^*) \varphi(b_{m-1})\sum_{j=1}^n x_{j}^*x_{j} .
\end{align*}
\end{itemize}
\end{Lemma}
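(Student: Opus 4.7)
The plan is to expand each argument and apply \Cref{cor::cum_of_cond} directly, exploiting the freeness of $W^*(U_n^{\rm nc})$ and $\mathcal{B}_0$ inside the free product. First, extend the generators to $(v_{ij})_{i,j\geq 1}$ as in the proof of \Cref{beta_invariance}, with $v_{ij}=u_{ij}$ if $i,j\leq n$ and $v_{ij}=\delta_{ij}1$ otherwise, so that $\tilde{\beta_n}(x_i)=\sum_{k}v_{ik}x_k$ in every case. Writing each argument of the cumulant in the form $p_l a_l q_l$ with $a_l=v_{i_lk_l}^{e_l}\in W^*(U_n^{\rm nc})$ and $p_l,q_l\in\mathcal{B}_0$ built out of $x_{k_l}^{(*)}$ and the adjacent $b$'s, \Cref{cor::cum_of_cond} yields
\[
\kappa_m^E[\,\cdots\,]=\sum_{k_1,\dots,k_m}\kappa_m^{h_n}(a_1,\dots,a_m)\,\varphi(q_1p_2)\cdots\varphi(q_{m-1}p_m)\,p_1q_m.
\]

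For the first assertion (that the cumulant lies in $\mathcal{B}_n$), one distinguishes cases. If some $i_l>n$ then $v_{i_lk_l}=\delta_{i_lk_l}1$ is scalar, and for $m\geq 2$ the multilinearity of $\kappa_m^{h_n}$ together with the vanishing $\kappa_r^{h_n}(1,\dots)=0$ forces the cumulant to vanish; the case $m=1$ is a direct evaluation of $E\circ\tilde{\beta_n}$, giving $x_{i_1}^{e_1}\in\mathcal{B}_n$ when $i_1>n$ and $0$ otherwise (since $h_n(u_{ij})=0$). If every $i_l\leq n$, the explicit formulas below realise the cumulant as a scalar multiple of $1$ or of $\sum_{j=1}^n x_j^*x_j$, both of which lie in $\mathcal{B}_n$.

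For $n\geq\max(i_1,\dots,i_m)$ only the honest generators $v_{i_lk_l}=u_{i_lk_l}$ occur, and \Cref{cor_CU15} forces $\underline{e}$ to be alternating and the surviving indices to obey a cyclic condition. For $\underline{e}=(\varnothing,*,\dots,\varnothing,*)$ this reads $i_1=i_m$, $i_2=i_3,\dots$ together with $k_1=k_2$, $k_3=k_4,\dots$, and one checks $p_1q_m=1$; for $\underline{e}=(*,\varnothing,\dots,*,\varnothing)$ it reads $i_1=i_2$, $i_3=i_4,\dots$ together with $k_1=k_m$, $k_2=k_3,\dots$, and $p_1q_m=x_{k_1}^*x_{k_1}$. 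Substituting the value $n^{1-m}(-1)^{m/2-1}C_{m/2-1}$ from \Cref{cor_CU15}, performing the $r=m/2$ independent sums over the paired $k$-indices, and identifying each $\varphi(q_lp_{l+1})$ as either $\varphi(x_sb_lx_s^*)$ or $\varphi(b_l)$, one obtains the announced formulas once $\sum_{s=1}^n\varphi(x_sbx_s^*)$ is replaced by $n\,\varphi(x_1bx_1^*)$ for each intermediate $b$.

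This last uniformity step uses the invariance hypothesis on $(x_i)_{i\in\mathbb{N}}$. Applying \Cref{beta_invariance} to the polynomial $p(t)=t_sb(t)t_s^*$ with $b\in\mathcal{B}_n$ (which is $\beta_n$-fixed, since $\beta_n$ fixes both $\sum_{j=1}^n t_j^*t_j$ and each $t_j$ with $j>n$), and combining freeness with the Haar identity $h_n(u_{sj}u_{sk}^*)=\delta_{jk}/n$, gives $\varphi(x_sbx_s^*)=\frac{1}{n}\sum_{j=1}^n\varphi(x_jbx_j^*)$ independently of $s\in[n]$. The main obstacle is the careful bookkeeping translating our $u_{ik}^{e}$ notation into the $(u^e)_{ik}$ convention of \Cref{cor_CU15}: since for $e=*$ the two indices are swapped, the cyclic condition falls on the $i$-indices or on the $k$-indices differently in the two alternating patterns, which is precisely what produces the asymmetry $p_1q_m=1$ versus $p_1q_m=x_{k_1}^*x_{k_1}$, and hence the factor $\sum_{j=1}^nx_j^*x_j$ in the second formula.
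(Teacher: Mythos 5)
Your computation follows the paper's proof almost line by line: the same extension of the generators to $v_{ij}$, the same application of \Cref{cor::cum_of_cond} to factor out $\kappa_m^{h_n}$ of the $v$'s, the same use of \Cref{cor_CU15} to force $\underline{e}$ alternating and to impose the cyclic constraints, and the same bookkeeping giving $p_1q_m=1$ in the $(\varnothing,*,\dots,\varnothing,*)$ case versus $p_1q_m=x_{k_1}^*x_{k_1}$ (hence the factor $\sum_{j\le n}x_j^*x_j$) in the $(*,\varnothing,\dots,*,\varnothing)$ case, with the correct powers of $n$.

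The genuine problem is your ``uniformity step''. You justify $\varphi(x_sbx_s^*)=\frac1n\sum_{j=1}^n\varphi(x_jbx_j^*)$ by invoking ``the invariance hypothesis on $(x_i)_{i\in\mathbb{N}}$'' and \Cref{beta_invariance}. But \Cref{cum_aux} carries no invariance hypothesis: the standing assumption that $\varphi_x$ is invariant under the dual action is introduced in the paper only \emph{after} this lemma, and \Cref{beta_invariance} itself presupposes invariance. Moreover, the lemma is applied in the implication $(1)\Rightarrow(2)$ of \Cref{theorem:infinite_setting} to a sequence that is only assumed to be a $\mathcal{B}$-valued free centered circular family with identical variances --- precisely the situation where invariance is the conclusion to be proved --- so your argument is unavailable there and would be circular. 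What the exact formulas really require is only that $\varphi(x_kbx_k^*)$, for $b\in\mathcal{B}_n$, does not depend on $k\le n$; the paper performs this replacement silently (so you have indeed put your finger on a real subtlety), and it holds in both situations where the lemma is used: under the later invariance assumption (e.g.\ by your computation, or via exchangeability of the joint distribution coming from \Cref{thm-infinitedual}), and under the identical-variance circularity hypothesis, because the joint $*$-distribution is then exchangeable and the generators of $\mathcal{B}_n$ are fixed by the transposition exchanging $1$ and $k$. Finally, note that the first assertion (membership in $\mathcal{B}_n$) does not need this uniformity at all, whereas your derivation of it ``from the explicit formulas below'' inherits the unwanted dependence: when all $i_l\le n$ and $\underline{e}=(*,\varnothing,\dots,*,\varnothing)$, the coefficient of $x_{k_1}^*x_{k_1}$ --- the constant $h_n$-cumulant value times the inner sums over $k_2,\dots,k_{m-1}$ --- is independent of $k_1$, so the cumulant is automatically a scalar multiple of $\sum_{j\le n}x_j^*x_j$ even before that scalar is identified.
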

\begin{proof}Set $N:=\max(n,i_1,\dots,i_m)$. We define $(v_{ij})_{1\leq i,j\leq N}$ by
\[
v_{ij} =
\begin{cases} u_{ij} & \text{if } 1 \leq i,j \leq n,\\
\delta_{ij} 1_{\operatorname{Pol}(U_n^{\rm nc})} & \text{if } \max\{i,j\} > n
\end{cases}
\]
in such a way that, for all $i\in [N],$ we have
\[\tilde{\beta}_n(x_i)=\sum_{k=1}^Nv_{ik}x_k.\]
We can use \Cref{cor::cum_of_cond} in order to compute
\[\kappa_{m}^{E} \big(\tilde{\beta_n}(x_{i_1})^{e_1}b_1, \dots , b_{m-1} \tilde{\beta_n}(x_{i_m})^{e_m}\big)=\sum_{k_1, \dots, k_m \in [N]^m} \kappa_m^{h_n} \big(v_{i_1k_1}^{e_1},\dots, v_{i_mk_m}^{e_m}\big)\times (*),\]
where the term $(*)$ is dependent of $\underline{i}$, $\underline{e}, b_1, \dots,b_{m-1}$ and $k_1, \dots, k_m$. We know the free cumulants of $(u_{ij})_{1\leq i,j\leq n}$ (given by \Cref{cor_CU15}), and we deduce that the free cumulants of $(v_{ij})_{1\leq i,j\leq n}$ are vanishing except if $m=1$ or if $m$ is even with $\underline{e}$ alternating. It yields that all the terms of the sum are vanishing except if $m=1$ or if $m$ is even with $\underline{e}$ alternating.

Let us examine the case where $m=1$:
\[appa_{m}^{E} \big(\tilde{\beta_n}(x_{i_1})\big)=\sum_{k_1\in [N]}\kappa_m^{h_n}(v_{i_1k_1})x_{k_1}=\begin{cases} 0& \text{if } 1 \leq i_1 \leq n,\\ x_{i_1} & \text{ if } i_1 > n.\end{cases}\]
Let us examine the case where $m=2r$ and $\underline{e}=(\varnothing,*,\dots,\varnothing,*)$ is alternating,
\begin{align*}
 &\kappa_{m}^{E} \big(\tilde{\beta_n}(x_{i_1})b_1, \dots , b_{m-1} \tilde{\beta_n}(x_{i_m})^*\big)\\
 &\qquad{} =\sum_{k_1, \dots, k_{m} \in [N]^{m}} \kappa_m^{h_n} (v_{i_1k_1},\dots, v_{i_mk_m}^*) \varphi(x_{k_1}b_1x_{k_2}^*) \varphi(b_2)\cdots \varphi(x_{k_{m-1}}b_{m-1}x_{k_{m}}^*)1_M.
\end{align*}Finally, if $m=2r$ and $\underline{e}=(*,\varnothing,\dots,*,\varnothing)$ is alternating, we have
\begin{align*}
&\kappa_{m}^{E} \big(\tilde{\beta_n}(x_{i_1})^*b_1, \dots , b_{m-1} \tilde{\beta_n}(x_{i_m})\big)\\
&\qquad{} = \sum_{k_1, \dots, k_{m} \in [N]^{m}}\kappa_m^{h_n} (v_{i_1k_1}^*,\dots, v_{i_mk_m}) \varphi(b_1)\varphi(x_{k_2}b_2x_{k_3}^*)\cdots \varphi(b_{m-1})x_{k_1}^*x_{k_m},\end{align*}
which is vanishing if $i_1>n$ or $i_m>n$. When $0\leq i_1,i_m \leq n$, $k_1$ must equal $k_m$ and we have{\samepage
\begin{align*}
&\kappa_{m}^{E} \big(\tilde{\beta_n}(x_{i_1})^*b_1, \dots , b_{m-1} \tilde{\beta_n}(x_{i_m})\big) \\
&\qquad{} = \sum_{k=1}^nx_{k}^*x_{k} \sum_{k_2, \dots, k_{m-1} \in [N]^{m}}\kappa_m^{h_n} (v_{i_11}^*,v_{i_1k_2},\dots, v_{i_{m-1}k_{m-1}}^*,v_{i_m1}) \\
&\qquad\quad{} \times \varphi(b_1)\varphi(x_{k_2}b_2x_{k_3}^*)\cdots \varphi(b_{m-1}).
\end{align*}
In all cases, the cumulant $\kappa_{m}^{E} \big(\tilde{\beta_n}(x_{i_1})^{e_1}b_1, \dots , b_{m-1} \tilde{\beta_n}(x_{i_m})^{e_m}\big)$ belongs to~$\mathcal{B}_n$.}

Whenever $n\geq \max(i_1,\dots,i_m)$ (or equivalently $N=n$), we can pursue the computation.
In the case where $m=2r$ and $\underline{e}=(\varnothing,*,\dots,\varnothing,*)$ is alternating, we have
\begin{align*}
 &\kappa_{m}^{E} \big(\tilde{\beta_n}(x_{i_1})b_1, \dots , b_{m-1} \tilde{\beta_n}(x_{i_m})^*\big)\\
 &\qquad{} =\sum_{k_1, \dots, k_{m} \in [n]^{m}} \kappa_m^{h_n} (u_{i_1k_1},\dots, u_{i_mk_m}^*) \varphi(x_{k_1}b_1x_{k_2}^*) \varphi(b_2)\cdots \varphi(x_{k_{m-1}}b_{m-1}x_{k_{m}}^*)\\
 &\qquad{} =(-1)^{m/2-1}n^{1-m}C_{m/2-1}\delta_{i_1=i_{m},i_2=i_3, \dots}\\
 &\qquad\quad{} \times \sum_{k_1=k_{2},k_3=k_4, \ldots \in [n]^{m}} \varphi(x_{k_1}b_1x_{k_2}^*)\varphi(b_2) \cdots\varphi(x_{k_{m-1}}b_{m-1}x_{k_{m}}^*)\\
 &\qquad{} =(-1)^{m/2-1}n^{1-m/2}C_{m/2-1}\delta_{i_1=i_{m},i_2=i_3, \dots} \varphi(x_{1}b_1x_{1}^*) \varphi(b_2) \cdots \varphi(x_{1}b_{m-1}x_{1}^*).
\end{align*}
Similarly, if $m=2r$ and $\underline{e}=(*,\varnothing,\dots,*,\varnothing)$ is alternating, we have
\begin{align*}
&\kappa_{m}^{E} \big(\tilde{\beta_n}(x_{i_1})^*b_1, \dots , b_{m-1} \tilde{\beta_n}(x_{i_m})\big)\\
&\qquad{} =(-1)^{m/2-1}n^{1-m}C_{m/2-1}\delta_{i_1=i_2,i_3=i_4, \dots} \\
 &\qquad\quad{} \times \sum_{k_1=k_{m},k_2=k_3, \ldots \in [n]^{2r}} x_{k_1}^*x_{k_{m}} \varphi(b_1) \varphi(x_{k_2}b_2x_{k_3}^*)\cdots
\varphi(b_{m-1})\\
&\qquad{} =(-1)^{m/2-1}n^{-m/2}C_{m/2-1}\delta_{i_1=i_2,i_3=i_4, \dots} \\
 &\qquad\quad{} \times \varphi(b_1) \varphi(x_{1}b_2x_{1}^*)\cdots \varphi(x_{1}b_{m-2}x_{1}^*) \varphi(b_{m-1})\sum_{j=1}^n x_{j}^*x_{j}.\tag*{\qed}
 \end{align*}\renewcommand{\qed}{}
\end{proof}

During the rest of this section, we assume that the distribution $\varphi_x$ of $(x_i)_{i \in \mathbb{N}}$ is invariant under the dual action of $U^{\rm nc}$. Thanks to \Cref{beta_invariance}, the $*$-distribution of $\big(\tilde{\beta_n}(x_j)\big)
_{j\geq 1}$ and the $*$-distribution of $(x_j)
_{j\geq 1}$ are the same, which means that we can extend $\tilde{\beta_n}$ to a homomorphism from $\mathcal{B}_0$ to ${W^*}(U^{\rm nc}_n) * \mathcal{B}_0$ such that $(h_n * \varphi)\circ \tilde{\beta_n}=\varphi$.
\begin{Lemma}\label{beta_inv}The linear map $E_n:=E^{h_n * \varphi}_\varphi\circ \tilde{\beta_n}$ is a $\varphi$-preserving conditional expectation from $\mathcal{B}_0$ to $\mathcal{B}_n$.
\end{Lemma}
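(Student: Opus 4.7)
The plan is to verify in turn the four defining properties of a $\varphi$-preserving conditional expectation onto $\mathcal{B}_n$: unitality, the $\mathcal{B}_n$-bimodule property, the range condition $E_n(\mathcal{B}_0)\subseteq \mathcal{B}_n$, and $\varphi$-invariance. The key preliminary observation is that $\tilde{\beta_n}$ restricts to the identity on $\mathcal{B}_n$: indeed, $\tilde{\beta_n}(x_i)=x_i$ for all $i>n$ by definition, and
\[
\tilde{\beta_n}\Bigg(\sum_{j=1}^n x_j^*x_j\Bigg)=\sum_{i=1}^n\sum_{j,k=1}^n x_j^*u_{ij}^*u_{ik}x_k=\sum_{j=1}^n x_j^*x_j,
\]
using the unitarity relations in $\operatorname{Pol}(U_n^{\rm nc})$. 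Since $\tilde{\beta_n}$ is a $*$-homomorphism (extended normally, as guaranteed by \Cref{beta_invariance}), it is the identity on the whole von Neumann algebra $\mathcal{B}_n$ generated by these elements.

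Unitality is then immediate: $E_n(1)=E^{h_n*\varphi}_\varphi(\tilde{\beta_n}(1))=E^{h_n*\varphi}_\varphi(1)=1$. For the bimodule property, let $b_1,b_2\in\mathcal{B}_n$ and $a\in\mathcal{B}_0$. Using the preliminary observation and the $\mathcal{B}_0$-bimodule property of $E^{h_n*\varphi}_\varphi$ (applied through the canonical embedding $\mathcal{B}_0\hookrightarrow W^*(U_n^{\rm nc})*\mathcal{B}_0$), we compute
\[
E_n(b_1ab_2)=E^{h_n*\varphi}_\varphi(b_1\tilde{\beta_n}(a)b_2)=b_1E^{h_n*\varphi}_\varphi(\tilde{\beta_n}(a))b_2=b_1E_n(a)b_2.
\]
For $\varphi$-invariance, note that by \Cref{beta_invariance} together with the invariance hypothesis on $(x_i)_{i\in\mathbb N}$, we have $(h_n*\varphi)\circ\tilde{\beta_n}=\varphi$, and therefore
\[
\varphi\circ E_n=\varphi\circ E^{h_n*\varphi}_\varphi\circ\tilde{\beta_n}=(h_n*\varphi)\circ\tilde{\beta_n}=\varphi.
\]

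The main point is the range condition $E_n(\mathcal{B}_0)\subseteq\mathcal{B}_n$. It suffices to check that $E^{h_n*\varphi}_\varphi\bigl(\tilde{\beta_n}(x_{i_1})^{e_1}\cdots\tilde{\beta_n}(x_{i_m})^{e_m}\bigr)\in\mathcal{B}_n$ for every word in the $x_i$'s, since products of such elements generate $\tilde{\beta_n}(\mathcal{B}_0)$ weakly (and $E^{h_n*\varphi}_\varphi$ is normal). Expanding by the operator-valued moment-cumulant formula, this expectation equals
\[
\sum_{\pi\in\operatorname{NC}(m)}\kappa_\pi^{E^{h_n*\varphi}_\varphi}\bigl[\tilde{\beta_n}(x_{i_1})^{e_1},\dots,\tilde{\beta_n}(x_{i_m})^{e_m}\bigr],
\]
and the nested cumulant $\kappa_\pi^{E^{h_n*\varphi}_\varphi}$ is computed recursively from innermost to outermost blocks. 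By \Cref{cum_aux}, any single cumulant $\kappa_k^{E^{h_n*\varphi}_\varphi}[\tilde{\beta_n}(x_{j_1})^{f_1}b_1,\dots,b_{k-1}\tilde{\beta_n}(x_{j_k})^{f_k}]$ with $b_1,\dots,b_{k-1}\in\mathcal{B}_n$ lies in $\mathcal{B}_n$. A straightforward induction on the depth of nesting, in which each outer cumulant receives as internal arguments the previously-computed inner cumulants (which lie in $\mathcal{B}_n$ by the induction hypothesis), shows that every $\kappa_\pi^{E^{h_n*\varphi}_\varphi}$-term belongs to $\mathcal{B}_n$. Hence so does the sum, which completes the proof.

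The only step that requires real work is this range verification, and that reduces immediately to \Cref{cum_aux} via the nested structure of operator-valued cumulants; the remaining three properties follow from formal manipulations and the identity $\tilde{\beta_n}|_{\mathcal{B}_n}=\mathrm{id}_{\mathcal{B}_n}$.
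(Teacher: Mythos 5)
Your proposal is correct and follows essentially the same route as the paper: the identity $\tilde{\beta_n}|_{\mathcal{B}_n}=\mathrm{id}$ (via the fixed generators $\sum_j x_j^*x_j$ and $x_{n+1},x_{n+2},\dots$), the bimodule property from that identity plus the $\mathcal{B}_0$-bimodule property of $E^{h_n*\varphi}_\varphi$, $\varphi$-invariance from $(h_n*\varphi)\circ\tilde{\beta_n}=\varphi$, and the range condition from \Cref{cum_aux}. The only difference is that you spell out the moment--cumulant expansion and the induction over nested blocks behind the range condition, which the paper compresses into ``thanks to \Cref{cum_aux}.''
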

\begin{proof}
We have $\varphi\circ E^{h_n * \varphi}_\varphi\circ \tilde{\beta_n}=h_n * \varphi\circ \tilde{\beta_n}=\varphi$. Moreover, because
\[\sum_{j=1}^nx_j^*x_j =\sum_{j=1}^n\tilde{\beta_n}(x_j)^*\tilde{\beta_n}(x_j),\qquad x_{n+1} =\tilde{\beta_n}(x_{n+1}),\qquad x_{n+2}=\tilde{\beta_n}(x_{n+2}),\qquad \dots,\]
we know that $\tilde{\beta_n}$ is the identity on $\mathcal{B}_n$, and we can write the bimodule property: for all $a\in \mathcal{B}_0$, and $b_1,b_2\in \mathcal{B}_n$,
\[E^{h_n * \varphi}_\varphi\circ \tilde{\beta_n}[b_1ab_2]=E^{h_n * \varphi}_\varphi\big[b_1\tilde{\beta_n}(a)b_2\big]=b_1E^{h_n * \varphi}_\varphi\big[\tilde{\beta_n}(a)\big]b_2.\]
It remains to prove that $E_n$ takes value in $\mathcal{B}_n$, which is true because
\[E^{h_n * \varphi}_\varphi\circ \tilde{\beta}_n\big[(x_{i_1})^{e_1}\cdots (x_{i_m})^{e_m}\big]=E^{h_n * \varphi}_\varphi\big[\tilde{\beta_n}(x_{i_1})^{e_1}\cdots \tilde{\beta_n}(x_{i_m})^{e_m}\big]\in \mathcal{B}_n ,\]
thanks to \Cref{cum_aux}.
\end{proof}
\begin{Proposition}[{\cite[Proposition~4.7]{curran}}]\label{conv_curran}For any $x\in M$, the sequence $E_n[x]$ converges in strong topology to a conditional expectation $E\colon\mathcal{B}_0\to \mathcal{B}_\infty$. Moreover, for all $m\geq 1$, $a_1,\dots,a_m\in M$, we have
\[\lim_{n\to \infty}\kappa_m^{E_n}(a_1, \dots , a_m)=\kappa_m^{E}(a_1, \dots , a_m).\]
\end{Proposition}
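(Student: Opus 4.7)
The proof naturally splits into two parts: strong convergence of $(E_n[x])_n$ to some $E[x]$, and convergence of the operator-valued cumulants. For the first part I would begin by verifying the reverse martingale property $E_{n+1}\circ E_n = E_{n+1}$. Since $\mathcal{B}_{n+1}\subset \mathcal{B}_n$, the composition $E_{n+1}\circ E_n$ is unital, $\varphi$-preserving, and takes values in $\mathcal{B}_{n+1}$; the $\mathcal{B}_{n+1}$-bimodule property is obtained by combining the $\mathcal{B}_n$-bimodule property of $E_n$ (applicable because $\mathcal{B}_{n+1}\subset \mathcal{B}_n$) with the $\mathcal{B}_{n+1}$-bimodule property of $E_{n+1}$. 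By uniqueness of $\varphi$-preserving conditional expectations (Takesaki, Theorem~IX.4.2), this composition equals $E_{n+1}$, so $(E_n[x])_n$ is a reverse martingale for the decreasing filtration $(\mathcal{B}_n)_n$.

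Next I would invoke the noncommutative reverse martingale convergence theorem for decreasing families of von Neumann subalgebras sharing a faithful normal state. Each $E_n$ is an $L^2(M,\varphi)$-contraction by Kadison--Schwarz, so the reverse martingale property forces $(E_n[x])_n$ to be $L^2$-Cauchy, hence convergent in $L^2$; combined with the uniform operator-norm bound $\|E_n[x]\|\le \|x\|$, this upgrades to convergence in the strong-$*$ operator topology. Denoting the limit by $F[x]$, the map $F$ is unital and $\varphi$-preserving, and it satisfies the $\mathcal{B}_\infty$-bimodule property by passing to strong-$*$ limits on bounded sets in the bimodule identities valid for $E_n$ over $\mathcal{B}_\infty\subset\mathcal{B}_n$. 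Its image lies in every $\mathcal{B}_n$ and hence in $\mathcal{B}_\infty$, while every element of $\mathcal{B}_\infty$ is fixed by each $E_n$ and therefore by $F$. Uniqueness then identifies $F$ with the $\varphi$-preserving conditional expectation $E\colon \mathcal{B}_0\to \mathcal{B}_\infty$ of the statement.

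For the cumulant convergence, I would use Möbius inversion on $\operatorname{NC}(m)$ to write $\kappa_m^{E_n}(a_1,\dots,a_m)$, inductively in $m$, as a finite integer linear combination of nested expressions of the form $E_n\bigl(a_{i_1}\cdots E_n(\cdots)\cdots a_{i_k}\bigr)$, with the nesting dictated by the block structure of $\sigma\in \operatorname{NC}(m)$. Each such nested expression is a finite composition of multiplications of bounded operators with applications of the normal map $E_n$. Multiplication is jointly strong-$*$ continuous on norm-bounded sets, and the inequality $\|E_n(y_n-y)\|_2\le \|y_n-y\|_2$ together with the uniform operator bound ensures that if $y_n\to y$ strong-$*$ with $\sup_n\|y_n\|<\infty$ then $E_n(y_n)\to E(y)$ strong-$*$. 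Applying this inductively to each nested expression and summing yields $\kappa_m^{E_n}(a_1,\dots,a_m)\to \kappa_m^E(a_1,\dots,a_m)$ strongly.

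The principal technical hurdle is the reverse martingale convergence in the possibly non-tracial $W^*$-setting: in the tracial case this is Cuculescu's classical theorem, while for a general faithful normal state one needs to argue either via Haagerup reduction or directly in the standard form of $M$, being careful with the $L^2$-completion with respect to $\varphi$. Granted this convergence, the remainder is routine Möbius inversion combined with joint strong-$*$ continuity of multiplication and normal conditional expectations on bounded sets.
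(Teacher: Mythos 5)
Your argument is correct, but note that the paper itself does not prove \Cref{conv_curran}: the statement is imported from Curran \cite[Proposition~4.7]{curran}, and the proof there runs along essentially the lines you describe (a reversed-martingale argument for the decreasing algebras $\mathcal{B}_n$, followed by an induction based on the operator-valued moment--cumulant formula). Two remarks on your sketch. First, the ``principal technical hurdle'' you flag is milder than you suggest: since only strong convergence is claimed (no almost uniform convergence), neither Cuculescu's theorem nor Haagerup reduction is needed even for a non-tracial faithful normal state $\varphi$. Because each $E_n$ is $\varphi$-preserving, its GNS implementation is the orthogonal projection of $L^2(\mathcal{B}_0,\varphi)$ onto $\overline{\mathcal{B}_n\Omega}$; these projections decrease, so $E_n[x]\Omega$ converges in $L^2$, and the uniform bound $\|E_n[x]\|\le\|x\|$ together with density of $\mathcal{B}_0'\Omega$ (faithfulness of $\varphi$) and strong closedness of each $\mathcal{B}_m$ upgrades this to strong-$*$ convergence to an element of $\mathcal{B}_\infty$ --- exactly your ``direct argument in the standard form'', which already suffices. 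Second, your verification of the reverse-martingale identity $E_{n+1}\circ E_n=E_{n+1}$ is fine: uniqueness of $\varphi$-preserving conditional expectations needs only faithfulness of $\varphi$ (modular invariance enters only for existence, which is here supplied by \Cref{beta_inv}), and the remaining cumulant convergence via M\"obius inversion and joint strong-$*$ continuity of multiplication on bounded sets is the same routine induction used in the cited source.
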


Here comes our infinite de Finetti theorem in the case of von Neumann algebras, i.e., for $W^*$-probability spaces.

\begin{Theorem}\label{theorem:infinite_setting}
Let $(x_i)_{i\in\mathbb{N}}$ be an infinite sequence of random variables in some $W^*$-probability space $(M,\varphi)$. The following are equivalent:
\begin{enumerate}\itemsep=0pt
\item[$1.$] There exists $v\in M$ such that, setting $\mathbb{E}\colon M\to \mathcal{B}$ the conditional expectation from $M$ to $\mathcal{B}:=W^*(v)$, $(x_i)_{i\in\mathbb{N}}$ is a $\mathcal{B}$-valued free centered circular family whose elements have identical variances
\[
\mathcal{B}\ni b\mapsto \mathbb{E}(x_i b x_i^*)=\varphi(x_ibx_i^*)1_M \qquad\text{and}\qquad \mathcal{B}\ni b\mapsto \mathbb{E}(x_i^*bx_i)=\varphi(b)v.
\]
\item[$2.$] The distribution $\varphi_x$ of $(x_i)_{i \in \mathbb{N}}$ is invariant under the dual action of~$U^{\rm nc}$.
\end{enumerate}
In this case, the sequence $\big(\frac{1}{n}\sum_{j=1}^nx_j^*x_j\big)_{n\in \mathbb{N}}$ strongly converges to~$v$.
\end{Theorem}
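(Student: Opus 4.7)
The nontrivial direction is $(2)\Rightarrow(1)$, which we handle via the operator-valued $W^*$-strategy of Curran~\cite{curran} adapted to the dual group setting; $(1)\Rightarrow(2)$ then follows by translating the operator-valued structure into the scalar cumulant characterization of \Cref{thm-infinitedual}.

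Assume $\varphi_x$ is invariant under the dual action of $U^{\rm nc}$. The three preparatory results supply exactly the tools required: \Cref{beta_inv} provides the conditional expectations $E_n\colon\mathcal{B}_0\to\mathcal{B}_n$, \Cref{conv_curran} yields their strong limit $E\colon\mathcal{B}_0\to\mathcal{B}_\infty$ together with cumulant convergence $\kappa_m^{E_n}\to\kappa_m^E$, and \Cref{cum_aux} gives closed-form expressions for the cumulants $\kappa_m^{E^{h_n*\varphi}_\varphi}$. A direct computation via \Cref{cor::cum_of_cond} (using $h_n(u_{1j}^*u_{1k})=n^{-1}\delta_{jk}$ from \Cref{cor_CU15}) yields $E_n[x_1^*x_1]=\frac{1}{n}\sum_{j=1}^n x_j^*x_j$, and \Cref{conv_curran} then forces the strong convergence $\frac{1}{n}\sum_{j=1}^n x_j^*x_j\to v:=E[x_1^*x_1]\in\mathcal{B}_\infty$, which both identifies the candidate $v$ and establishes the convergence claim of the theorem. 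Since $\tilde{\beta}_n$ is the identity on $\mathcal{B}_n$, for $b_1,\dots,b_{m-1}\in\mathcal{B}_\infty\subseteq\mathcal{B}_n$ and $n\geq\max(i_1,\dots,i_m)$ one has
\[
\kappa_m^{E_n}\bigl(x_{i_1}^{e_1}b_1,\dots,b_{m-1}x_{i_m}^{e_m}\bigr)=\kappa_m^{E^{h_n*\varphi}_\varphi}\bigl(\tilde{\beta}_n(x_{i_1})^{e_1}b_1,\dots,b_{m-1}\tilde{\beta}_n(x_{i_m})^{e_m}\bigr),
\]
so \Cref{cum_aux} applies. Passing to $n\to\infty$, the prefactors $n^{1-m/2}$ and $n^{-m/2}$ annihilate every cumulant with $m\geq 4$, and non-alternating patterns vanish already at finite $n$ by \Cref{cum_aux}; only the $m=2$ contributions survive, giving
\begin{gather*}
\kappa_2^E\bigl(x_{i_1}b_1,x_{i_2}^*\bigr)=\delta_{i_1,i_2}\varphi(x_1b_1x_1^*)1_M,\\
\kappa_2^E\bigl(x_{i_1}^*b_1,x_{i_2}\bigr)=\delta_{i_1,i_2}\varphi(b_1)v,
\end{gather*}
where the second limit combines cumulant convergence with the strong convergence of $\frac{1}{n}\sum_j x_j^*x_j$ to $v$. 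These are precisely the cumulants of a $\mathcal{B}_\infty$-valued free centered circular family with common variance maps $\theta(b)=\varphi(x_1bx_1^*)1_M$ and $\eta(b)=\varphi(b)v$.

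Both variance maps take values in $W^*(v)\subseteq\mathcal{B}_\infty$, so every operator-valued cumulant evaluated on $W^*(v)$-sandwiched arguments already lies in $W^*(v)$; by the tower property of $\varphi$-preserving conditional expectations, the unique such expectation $\mathbb{E}\colon M\to W^*(v)$ agrees with $E$ composed with the projection $\mathcal{B}_\infty\to W^*(v)$ on the algebra generated by the $x_i$'s, and the same cumulant formulas hold with $\mathbb{E}$ in place of $E$, yielding (1). For the converse $(1)\Rightarrow(2)$, the scalar $*$-moments $\varphi(x_{i_1}^{e_1}\cdots x_{i_m}^{e_m})=\varphi\circ\mathbb{E}(x_{i_1}^{e_1}\cdots x_{i_m}^{e_m})$ decompose over noncrossing alternating pair partitions compatible with $\ker\underline{i}$, each block contributing a scalar of the form $\varphi(x_1\cdot x_1^*)$ or a nested factor $\varphi(\cdot)v$; Möbius inversion then shows that the only non-vanishing scalar free cumulants are those supported on the two cyclic alternating index patterns of \Cref{thm-infinitedual} and depend only on the length, which gives invariance by that theorem. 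The main obstacle I expect is the careful limit analysis in \Cref{cum_aux}, namely combining strong convergence of $n^{-1}\sum_j x_j^*x_j$ with the pointwise cumulant convergence of \Cref{conv_curran} to extract the precise value $\varphi(b_1)v$, and verifying rigorously that the restriction step from $\mathcal{B}_\infty$-valued to $W^*(v)$-valued cumulants loses no information.
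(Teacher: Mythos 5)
Your direction $(2)\Rightarrow(1)$ is essentially the paper's own argument: you use \Cref{beta_inv} and the computation $E_n[x_1^*x_1]=\frac1n\sum_j x_j^*x_j$, the limits of the cumulants from \Cref{cum_aux} (only the two alternating $m=2$ cumulants survive, with limits $\varphi(x_1bx_1^*)1_M$ and $\varphi(b)v$), \Cref{conv_curran} to pass to $E\colon\mathcal{B}_0\to\mathcal{B}_\infty$, and then the restriction from $\mathcal{B}_\infty$ to $W^*(v)$, exactly as in the paper.

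The direction $(1)\Rightarrow(2)$ is where you diverge, and this is where I see a genuine gap. You claim that from the $W^*(v)$-valued circular structure with $\theta(b)=\varphi(x_1bx_1^*)1_M$ and $\eta(b)=\varphi(b)v$, ``M\"obius inversion then shows'' that the scalar free cumulants vanish outside the two cyclic alternating patterns and depend only on the length, so that \Cref{thm-infinitedual} applies. That statement is true, but it is precisely the hard combinatorial content of this implication, and you give no argument for it. Note that \Cref{cum_of_cond} (the tool that converts operator-valued into scalar cumulants elsewhere in the paper) is unavailable here, because $\mathcal{B}=W^*(v)$ is \emph{not} free from the $x_i$'s; moreover the nested variance $\eta(b)=\varphi(b)v$ inserts powers of $v$ into enclosing blocks, so the scalar moments are polynomials in mixed quantities $\varphi(v^k)$ and $\varphi(x_1v^kx_1^*)$, and the inversion producing the cyclic-alternating pattern with length-only dependence requires a genuine inductive argument over nestings (already checking $\kappa_4(x_1^*,x_1,x_2^*,x_2)=\kappa_4(x_1^*,x_1,x_1^*,x_1)$ uses a cancellation that is not ``formal''). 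The paper avoids this entirely: it proves $(1)\Rightarrow(2)$ by an approximation argument, showing via \Cref{cum_aux} and cumulant convergence that the joint distribution of $(x_i)$ is the limit of the distributions of $\big(\tilde\beta_n(x_i)\big)$, each of whose finite subfamilies is invariant (as in \Cref{ex-DeFin1}), so invariance passes to the limit. Finally, under hypothesis $(1)$ the paper proves the last claim (strong convergence of $\frac1n\sum_jx_j^*x_j$ to the \emph{given} $v$) by the free law of large numbers for the $\mathbb{E}$-free identically distributed family $(x_j^*x_j)$; your proposal only obtains convergence to $E[x_1^*x_1]$ under $(2)$ and never identifies this limit with the $v$ appearing in $(1)$, so that part of the statement is also left unproved on your route.
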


\begin{proof}
Implication $(1)\rightarrow (2)$: First of all, the variables $x_j^*x_j$ are freely independent and identically distributed with respect to $\mathbb{E}$. As a consequence, the normalized sum $\big(\frac{1}{n}\sum_{j=1}^nx_j^*x_j\big)_n$ strongly converges to $\mathbb{E}[x_i^*x_i]=v$ thanks to the free law of large number.

Now the strategy is the following: we will consider a sequence of variables which are invariant under the dual action of $U^{\rm nc}$, and which converges in distribution to $(x_i)_{i \in \mathbb{N}}$.

More precisely, let us consider $\big(\tilde{\beta}_n(x_i)\big)_{i \in \mathbb{N}}$. We set $E:=E^{h_n * \varphi}_\varphi$. For all $n\geq 1,$ $m \geq 1$, $(i_1,\dots,i_m)\in \mathbb{N}^m $ and $b_1\dots b_m\in \mathcal{B}_\infty$, \Cref{cum_aux} gives us the exact value of
\[\kappa^{E}_m\big[\tilde{\beta}_n(x_{i_1})^{e_1}b_1, \dots , b_{m-1} \tilde{\beta}_n(x_{i_m})^{e_m}\big]\]
in the case where $n\geq \max(i_1,\dots,i_m)$. By letting $n$ tend to $\infty$, we get $0$ if $m\neq 2$, or if $e_1=e_2$. The only non-vanishing cases are
\[
\lim_{n\to \infty} \kappa^{E}_2\big[\tilde{\beta}_n(x_{i_1})b,\tilde{\beta}_n(x_{i_2})^*\big]= \delta_{i_1,i_2}\varphi(x_1bx_1^*)=\kappa^{\mathbb{E}}_2[x_{i_1}b,x_{i_2}^*]
\]
and
\[\lim_{n\to \infty} \kappa^{E}_2\big[\tilde{\beta}_n(x_{i_1})^*b,\tilde{\beta}_n(x_{i_2})\big]= \delta_{i_1,i_2}\varphi(b)v=\kappa^{\mathbb{E}}_2[x_{i_1}^*b,x_{i_2}].
\]
As a consequence, we can say that the free cumulants $\kappa^{E}_m$ of $\big(\tilde{\beta}_n(x_i)\big)_{i \in \mathbb{N}}$ under $E=E^{h_n * \varphi}_\varphi$ converge strongly to the free cumulant $\kappa^{\mathbb{E}}_m$ of $(x_i)_{i \in \mathbb{N}}$ under $\mathbb{E}$. Moreover, by induction, it is also true for the free cumulant $\kappa^{E}_{\pi}$ of a noncrossing partition $\pi$ which converges to the corresponding free cumulant $\kappa^{\mathbb{E}}_\pi$. It implies firstly that the distribution of $\big(\tilde{\beta}_n(x_i)\big)_{i \in \mathbb{N}}$ under $E^{h_n * \varphi}_\varphi$ converges strongly to the distribution of $(x_i)_{i \in \mathbb{N}}$ under $\mathbb{E}$ and secondly that the distribution of $\big(\tilde{\beta}_n(x_i)\big)_{i \in \mathbb{N}}$ under $h_n * \varphi$ converges strongly to the distribution of $(x_i)_{i \in \mathbb{N}}$ under~$\varphi$.

In order to conclude, we remark that, for any $1\leq m \leq n$, the distribution o
\[
\big(\tilde{\beta}_n(x_1),\dots,\tilde{\beta}_n(x_m)\big)
\]
is invariant under $\alpha_m$. As a consequence, for any $1\leq m$, the distribution of $(x_1,\dots,x_m)$ is invariant under $\alpha_m$ (as it is the limit of the distribution of $\big(\tilde{\beta}_n(x_1),\dots,\tilde{\beta}_n(x_m)\big)$ when $n$ tends to $\infty$).

Implication $(2)\rightarrow (1)$:
\Cref{beta_inv} tells us that
$E_n[x_1^*x_1]=\frac{1}{n}\sum_{j=1}x_j^*x_j$, and this variable converges to $v:=E[x_1^*x_1]$ thanks to \Cref{conv_curran}.

For all $n\geq 1$, $m\geq 1$, $(i_1,\dots,i_m)\in \mathbb{N}^m $ and $b_1\dots b_m\in \mathcal{B}_\infty$, \Cref{cum_aux} gives us the exact value of
\[\kappa^{E_n}_m\big[x_{i_1}^{e_1}b_1,\dots , b_{m-1} x_{i_m}^{e_m}\big]\]
in the case where $n\geq \max(i_1,\dots,i_m)$. By letting $n$ tend to $\infty$, we get $0$ if $m\neq 2$, or if $e_1=e_2$. The only non-vanishing cases are
\[
\lim_{n\to \infty}\kappa^{E_n}_2[x_{i_1}b,x_{i_2}^*]=\delta_{i_1,i_2}\varphi(x_1bx_1^*) \qquad\text{and}\qquad \lim_{n\to \infty}\kappa^{E_n}_2[x_{i_1}^*b,x_{i_2}]=\delta_{i_1,i_2}\varphi(b)v.
\]

\Cref{conv_curran} allows us to conclude that the cumulants $\kappa^{E}$ of $(x_i)$ are always vanishing, except
\[
\mathcal{B}_\infty\ni b\mapsto \kappa^{E}_2[x_{i}b,x_{i}^*]=\varphi(x_ibx_i^*)1_\mathcal{B} \qquad\text{and}\qquad \mathcal{B}_\infty\ni b\mapsto \kappa^{E}_2[x_{i}^*b,x_{i}]=\varphi(b)v,
\]
which means that $(x_i)_{i\in\mathbb{N}}$ is a $\mathcal{B}_\infty$-valued free centered circular family whose elements have identical variances. Because $\mathcal{B}$ is invariant by the action of these variances, we get the result.
\end{proof}

\subsection{Invariance for infinite sequences for tracial von Neumann algebras}

Just like in our finite de Finetti theorem (see Section~\ref{sectfinitetracial}), we now consider the tracial case, i.e., of $W^*$-probability spaces where $\varphi$ is a trace.

\begin{Proposition}\label{prop:realization_infinite}Let $(x_i)_{i\in\mathbb{N}}$ be an infinite sequence of random variables in some tracial $W^*$-probability space $(M,\varphi)$. The following are equivalent:
\begin{enumerate}\itemsep=0pt
 \item[$1.$] The distribution $\varphi_x$ of $(x_i)_{i \in \mathbb{N}}$ is invariant under the dual action of $U^{\rm nc}$.
 \item[$2.$] The sequence $(x_i)_{i \in \mathbb{N}}$ has the same $*$-distribution as $(c_ix)_{i \in \mathbb{N}}$ where $(c_i)_{i \in \mathbb{N}}$ is a sequence of free circular variables, $x$ is self-adjoint and $(c_i)_{i \in \mathbb{N}}$ and $x$ are $*$-free.
\end{enumerate}
In this case, $x^2$ and the strong limit $v$ of $\left(\frac{1}{n}\sum_{i=1}^nx_i^*x_i\right)_{n\in \mathbb{N}}$ are identically distributed. More generally, the distribution of $x$ can be taken as any distribution such that $x^2$ and $v$ are identically distributed.
\end{Proposition}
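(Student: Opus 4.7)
The plan is to deduce the equivalence from \Cref{theorem:infinite_setting}, in analogy with how \Cref{prop:realization} rests on \Cref{determinating_distribution} in the finite setting.

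For $(2) \Rightarrow (1)$, I would show directly that any sequence of the form $(c_i x)$ with $(c_i)_{i \in \mathbb{N}}$ a free circular family and $x$ self-adjoint $*$-free from $(c_i)_{i\in\mathbb{N}}$ satisfies the cumulant conditions of \Cref{thm-infinitedual}. The only non-vanishing $*$-cumulants of free circular variables are $\kappa_2(c_i, c_i^*) = \kappa_2(c_i^*, c_i) = 1$ for alternating sign patterns, so \Cref{cum_alternating}, applied to $\{c_i\}_{i \in \mathbb{N}}$ and $\{x\}$, forces every free cumulant $\kappa_m((c_{i_1} x)^{e_1}, \dots, (c_{i_m} x)^{e_m})$ to vanish unless $\underline{e}$ is alternating. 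In the alternating case, \Cref{cum_of_prod} rewrites this cumulant as a sum over partitions $\pi \in \operatorname{NC}(2m)$ with $\pi \vee \sigma = 1_{2m}$ for $\sigma = \{\{1,2\},\dots,\{2m-1,2m\}\}$; $*$-freeness of the $c_i$ from $x$ kills non-monochromatic blocks, and the restriction to pair cumulants of the $c_i$ forces the cyclic matching of indices required by \Cref{thm-infinitedual}, while the remaining factor is a moment of $x$ depending only on the length~$2m$.

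For $(1) \Rightarrow (2)$, I set $v := \lim_{n \to \infty} \tfrac{1}{n}\sum_{j=1}^n x_j^* x_j$ (strongly, by \Cref{theorem:infinite_setting}). After enlarging $(M,\varphi)$ if necessary, I choose a free circular sequence $(c_i)_{i \in \mathbb{N}}$ and a self-adjoint $\tilde x$, all $*$-free, with $\tilde x^2$ having the same distribution as $v$. By the direction just proved, $(c_i \tilde x)_{i \in \mathbb{N}}$ is invariant under the dual action. Writing
\[
\frac{1}{n}\sum_{i=1}^n (c_i \tilde x)^* (c_i \tilde x) = \tilde x\Bigl(\frac{1}{n}\sum_{i=1}^n c_i^* c_i\Bigr)\tilde x,
\]
the scalar free law of large numbers applied to the free identically distributed family $(c_i^* c_i)_{i \in \mathbb{N}}$, together with $\varphi(c_1^* c_1) = \kappa_2(c_1^*, c_1) = 1$, gives strong convergence to $\tilde x^2$, so the $v$ associated by \Cref{theorem:infinite_setting} with $(c_i \tilde x)$ has the same distribution as the original one.

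To conclude, I would argue that the scalar $*$-distribution of an invariant infinite tracial sequence is determined by the scalar distribution of $v$. By \Cref{theorem:infinite_setting}, both $(x_i)$ and $(c_i \tilde x)$ are $W^*(v)$- (resp.\ $W^*(\tilde x^2)$-)valued free centered circular families with variances $\theta(b) = \varphi(x_1 b x_1^*) 1_M$ and $\eta(b) = \varphi(b) v$. Traciality of $\varphi$ and the bimodule property of $\mathbb{E}$ yield
\[
\varphi(x_1 b x_1^*) = \varphi(b x_1^* x_1) = \varphi\bigl(b\,\mathbb{E}(x_1^* x_1)\bigr) = \varphi(bv) \qquad \text{for } b \in W^*(v),
\]
so both variance maps depend only on $(v, \varphi)$. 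The $\varphi$-preserving $*$-isomorphism $W^*(v) \to W^*(\tilde x^2)$ sending $v$ to $\tilde x^2$ therefore intertwines the two operator-valued moment-cumulant expansions, giving equality of the scalar $*$-distributions. The main obstacle will be this last bookkeeping step: checking rigorously that every scalar $*$-moment produced by the $\mathcal{B}$-valued moment-cumulant formula for such circular families is a fixed functional of $v$ evaluated under $\varphi$, so that matching the distribution of $v$ really does match the full $*$-distribution.
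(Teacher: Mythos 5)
Your proposal is correct and follows essentially the same route as the paper: realize the invariant distribution as $(c_i x)$ with $x$ self-adjoint, $x^2$ distributed as $v$, and compare the two sequences through their operator-valued free centered circular structure over $W^*(v)\cong W^*\big(x^2\big)$, using traciality to identify the variance maps $b\mapsto\varphi(vb)1$ and $b\mapsto\varphi(b)v$. The only difference is that the paper gets the $W^*(x)$-valued circularity of $(c_ix)$ directly from \Cref{cum_of_cond}, whereas you re-apply \Cref{theorem:infinite_setting} to $(c_i\tilde x)$ after checking its invariance and identifying its limit as $\tilde x^2$; both work. The final ``bookkeeping'' step you flag is not a genuine obstacle: exactly as you sketch, every scalar $*$-moment equals $\varphi$ of a $\mathcal{B}$-valued moment, which the operator-valued moment--cumulant formula writes as nested applications of $\theta$ and $\eta$, hence as a functional of the distribution of $v$ alone --- the paper's own proof concludes by the same (equally tersely stated) observation.
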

For example, the distribution of $x$ can be taken as the distribution of $\sqrt{v}$.
\begin{proof}Implication $(2)\rightarrow (1)$:
It is just an application of \Cref{ex-DeFin1}.

Implication $(1)\rightarrow (2)$:
By enlarging $(M,\varphi)$ if necessary, we consider a self-adjoint variable $x$ and a sequence of free circular variables $(c_i)_{i \in \mathbb{N}}$ $*$-free from $x$ such that $x^2$ and $v$ are identically distributed. Denoting by $E$ the conditional expectation from $M$ to $W^*(x)$, we can compute the $W^*(x)$-valued cumulants of $(c_ix,(c_ix)^*)_{i \in \mathbb{N}}$ thanks to Theorem~\ref{cum_of_cond}. They all vanish except
\[ \kappa_2^E(c_ixb,(xc_i)^*)=\varphi\big(x^2b\big)1_M \qquad\text{and}\qquad \kappa_2^E((c_ix)^*b,xc_i)=\varphi(b)x^2,\qquad \forall b\in W^*(x),
\] which means that $(c_ix)_{i \in \mathbb{N}}$ is a $W^*(x)$-valued free centered circular family. The variance leaving invariant the subalgebra $W^*\big(x^2\big)$, $(c_ix)_{i \in \mathbb{N}}$ is a $W^*\big(x^2\big)$-valued free centered circular family with variances
 \[
W^*\big(x^2\big)\ni b\mapsto \varphi\big(x^2b\big)1_M \qquad\text{and}\qquad W^*\big(x^2\big)\ni b\mapsto \varphi(b)x^2.
\]
Using Theorem~\ref{theorem:infinite_setting}, we know that $(x_i)_{i \in \mathbb{N}}$ is a $W^*(v)$-valued free centered circular family with variances
\[
W^*(v)\ni b\mapsto \varphi(x_ibx_i^*)1_M=\varphi(vb)1_M \qquad\text{and}\qquad W^*(v)\ni b\mapsto \varphi(b)v,
\]
where we used the traciality and the exchangeability to write
\[\varphi(x_ibx_i^*)1_M=\varphi(bx_i^*x_i)1_M=\lim_n\varphi\left(b\cdot \left(\frac{1}{n}\sum_{i=1}^{n}x_i^*x_i\right)\right)1_M=\varphi(vb)1_M .\]
The distribution of $v$ and $x^2$ being the same, $(x_i)_{i \in \mathbb{N}}$ and $(c_ix)_{i \in \mathbb{N}}$ have the same $*$-distri\-bu\-tion.
\end{proof}

 \begin{Corollary}\label{cor-infinite}
Let $(x_i)_{i\in\mathbb{N}}$ be an infinite sequence of random variables in some tracial $W^*$-probability space $(M,\varphi)$. The following are equivalent:
\begin{enumerate}\itemsep=0pt
 \item[$1.$] The distribution $\varphi_x$ of $(x_i)_{i \in \mathbb{N}}$ is invariant under the dual action of $U^{\rm nc}$ and the strong limit $v$ of $\big(\frac{1}{n}\sum_{i=1}^nx_i^*x_i\big)_{n\in \mathbb{N}}$ has a trivial kernel.
 \item[$2.$] We have the decomposition $(x_i)_{i \in \mathbb{N}}=(c_ix)_{i \in \mathbb{N}}$ where $(c_i)_{i \in \mathbb{N}}$ is a sequence of free circular variables in $M$, $x$ is the strong limit of $\big(\sqrt{\frac{1}{n}\sum_{i=1}^nx_i^*x_i}\big)_{n \in \mathbb{N}}$, $x$ has a trivial kernel and $(c_i)_{i \in \mathbb{N}}$ and $x$ are $*$-free.
\end{enumerate}
\end{Corollary}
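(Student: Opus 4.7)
The plan is to follow exactly the strategy of Corollary \ref{cor-finite}, using Proposition \ref{prop:realization_infinite} as the substitute for Proposition \ref{prop:realization}.

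First I would treat the easy direction $(2)\Rightarrow(1)$. Given the decomposition $x_i=c_ix$ with $(c_i)$ circular, $*$-free and $*$-free from $x$, the invariance under the dual action follows from Proposition \ref{prop:realization_infinite}. To get the statement about $v$, I would compute
\[
\frac{1}{n}\sum_{i=1}^n x_i^*x_i=x\Bigg(\frac{1}{n}\sum_{i=1}^nc_i^*c_i\Bigg)x,
\]
and use the free law of large numbers on the $*$-free i.i.d.\ sequence $(c_i^*c_i)_{i\in\mathbb{N}}$ (each $c_i^*c_i$ has expectation $1$) to conclude that $\frac{1}{n}\sum c_i^*c_i\to 1$ strongly; together with the boundedness of $x$ and $*$-freeness between $x$ and $(c_i)$, this yields strong convergence of the product to $x^2$, so $v=x^2$, whose kernel is trivial because $x$ was assumed to have trivial kernel.

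The substance is in $(1)\Rightarrow(2)$, which mirrors the proof of Corollary \ref{cor-finite}. Applying Proposition \ref{prop:realization_infinite}, in a possibly enlarged tracial $W^*$-probability space I obtain a realization $(\tilde c_i\tilde x)_{i\in\mathbb{N}}$ of the joint $*$-distribution of $(x_i)_{i\in\mathbb{N}}$, with $(\tilde c_i)$ a free circular sequence, $\tilde x$ self-adjoint, $*$-free from $(\tilde c_i)$, and $\tilde x^2$ distributed as $v$; I pick $\tilde x$ positive, so $\tilde x=\sqrt{v}$ in distribution. Inside $M$ itself I now put $x:=\sqrt{v}$ via the Borel functional calculus on $v$; since $v$ has trivial kernel, so does $x$, hence $x$ admits a densely-defined inverse $x^{-1}$ in the algebra of operators affiliated with $M$. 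I then define
\[
c_i:=x_i\,x^{-1},
\]
as affiliated operators, so that tautologically $x_i=c_ix$.

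It remains to identify the joint $*$-distribution of $(c_i,x)_{i\in\mathbb{N}}$ with that of $(\tilde c_i,\tilde x)_{i\in\mathbb{N}}$. For this I would use that the $W^*$-algebra $W^*(x_1,x_2,\ldots)\subset M$ is $*$-isomorphic to $W^*(\tilde c_1\tilde x,\tilde c_2\tilde x,\ldots)$ via $x_i\mapsto\tilde c_i\tilde x$ (the two families have the same $*$-distribution with respect to faithful traces, so their generated von Neumann algebras are canonically identified). Extending this $*$-isomorphism to affiliated operators, the image of $x=\sqrt{v}=\sqrt{\lim_n\frac{1}{n}\sum x_i^*x_i}$ is $\sqrt{\lim_n\frac{1}{n}\sum(\tilde c_i\tilde x)^*(\tilde c_i\tilde x)}=\tilde x$, and the image of $c_i=x_ix^{-1}$ is $\tilde c_i\tilde x\cdot\tilde x^{-1}=\tilde c_i$. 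Hence $(c_i,x)$ and $(\tilde c_i,\tilde x)$ have the same joint $*$-distribution; in particular the $c_i$ are bounded (so belong to $M$), form a free circular sequence, and are $*$-free from $x$.

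The main obstacle will be the last identification step, because the operators $x^{-1}$ and $c_i=x_ix^{-1}$ are a priori only affiliated; one has to verify carefully that extending the $*$-isomorphism of the von Neumann algebras to affiliated operators transports $x$ to $\tilde x$ and $c_i$ to $\tilde c_i$, and that the resulting $c_i$ are actually bounded. Boundedness is guaranteed by the fact that $\tilde c_i$ is bounded and the $*$-isomorphism preserves boundedness (equivalently, the spectral projections of $|c_i|$ above any $\lambda>\|\tilde c_i\|$ must be zero). Apart from this functional-analytic care, the argument is essentially an exact transcription of the finite-sequence proof of Corollary \ref{cor-finite}.
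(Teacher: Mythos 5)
Your proposal is correct and follows essentially the same route as the paper: the easy direction via the realization result with the identification $v=x^2$ (the paper cites Example 4.3 and the kernel observation, you spell out the free law of large numbers step), and the hard direction by setting $x=\sqrt{v}$, inverting it among affiliated operators, putting $c_i=x_ix^{-1}$, and transporting the realization $(\tilde c_i\tilde x)$ of Proposition 5.9 through the trace-preserving isomorphism of generated von Neumann algebras extended to affiliated operators to conclude boundedness, circularity and $*$-freeness. The functional-analytic care you flag at the end is exactly what the paper's own proof relies on.
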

\begin{proof}
Implication $(2)\rightarrow (1)$: It is just an application of \Cref{ex-DeFin1}, with the additional observation that $v=x^2$ and $x$ have the same kernel.

Implication $(1)\rightarrow (2)$: The fact that $v$ has a trivial kernel implies that we can invert $x:=\sqrt{v}$ which is the strong limit of $\big(\sqrt{\frac{1}{n}\sum_{i=1}^nx_i^*x_i}\big)_{n \in \mathbb{N}}$ (in the algebra of affiliated operators) and we can define $c_i:=x_i\cdot x^{-1}$ in such a way that $x_i=c_ix$. It remains to prove that $(c_i)_{i\in \mathbb{N}}$ and $x$ are in $M$ and are $*$-free with the $*$-distribution announced.

 Let $(\tilde{c}_i\tilde{x})_{i\in \mathbb{N}}$ be the realization of the $*$-distribution of $(x_i)_{i\in \mathbb{N}}$ (not necessarily in $M$) appearing in \Cref{prop:realization_infinite}, with $\tilde{x}$ positive. But this means that the von Neumann algebra generated by $(x_i)_{i\in \mathbb{N}}$ is isomorphic to the von Neumann algebra generated by $(\tilde{c}_i\tilde{x})_{i\in \mathbb{N}}$ via the mapping \mbox{$x_i \mapsto \tilde{c}_i\tilde{x}$}. We extend this mapping to the algebra of affiliated operators (not necessarily bounded). The image of $x$ is the strong limit of $\sqrt{\frac{1}{n}\sum_{i=1}^n(\tilde{c}_i\tilde{x})^*\tilde{c}_i\tilde{x}}$, which is $\tilde{x}$, and the image of $c_i=x_i\cdot x^{-1}$ is $\tilde{c}_i\tilde{x}\cdot \tilde{x}^{-1}=\tilde{c}_i$. As a consequence, the $*$-distribution of $(c_i, x)_i$ is the $*$-distribution of $(\tilde{c}_i, \tilde{x})$: they are bounded, and $(c_i)_{i\in \mathbb{N}}$ is a sequence of free circular variables in $M$, which is $*$-free from $x:=\sqrt{\sum_{i=1}^nx_i^*x_i}$.
\end{proof}

\section{De Finetti theorems for bialgebra actions}\label{sec-bialg}

We now pass to a different kind of action of the Brown algebra: to bialgebra actions. These are actions which make use of the tensor product of algebras (bialgebra actions) rather than of the free product (dual group actions). Surprisingly, there cannot be a de Finetti theorem in that case: we will show a kind of no-go theorem for this situation. However, if we weaken the assumption of a $W^*$-probability space to a space $(M,\varphi)$ where $\varphi$ is not faithful, we do obtain a~non-trivial de Finetti theorem.

\subsection{Bialgebra actions}

Denote again by $\mathcal{Q}_n$ the unital free $*$-algebra generated by $t_i$, $i=1,\dots,n$. The Brown algebra $\operatorname{Pol}(U_n^{\rm nc})$ has also an action as $*$-bialgebra on $\mathcal{Q}_n$, since for every $n\in\mathbb{N}$ there exists a unique $*$-homomorphism $\gamma_n \colon \mathcal{Q}_n\to\operatorname{Pol}(U_n^{\rm nc})\otimes\mathcal{Q}_n$ with $\gamma_n(t_i) = \sum_{j=1}^n u_{ij} \otimes t_j$, which furthermore satisfies the coaction identities
\[
(\Delta\otimes {\rm id})\circ\gamma_n = ({\rm id}\otimes \gamma_n)\circ\gamma_n
\text{ and } (\delta\otimes {\rm id})\circ\gamma_n = {\rm id}.
\]

\begin{Definition}\label{def-inv-bialg}
Let $(x_i)_{i\in\mathbb{N}}$ be a sequence of random variables in a noncommutative probability space $(A,\varphi)$. The distribution $\varphi_x$ of $(x_i)_{i\in\mathbb{N}}$ is said to be \emph{invariant under the $*$-bialgebraic action of $U^{\rm nc}$}, if $\varphi_x$ is invariant under the coactions $\gamma_n$, i.e., if
\[
({\rm id}\otimes\varphi_x)\circ \gamma_n = \varphi_x \mathbf{1},
\]
for all $n\ge 1$.
\end{Definition}

\begin{Remark}
A sequence $(x_i)_{i\in\mathbb{N}}$ of quantum random variables is invariant under the $*$-bialgebraic action of $U^{\rm nc}$ if and only if we have
\begin{equation}\label{eq-inv-bialg}
\sum_{1\le i_1,\dots,i_k\le n} u^{e_1}_{j_1i_1}\cdots u^{e_k}_{j_ki_k}\varphi\big(x^{e_1}_{i_1}\cdots x^{e_k}_{i_k}\big) = \varphi\big(x^{e_1}_{j_1}\cdots x^{e_k}_{j_k}\big)1,
\end{equation}
for all $k\in\mathbb{N}$, $1\le j_1,\dots,j_k\le n$, $e=(e_1,\dots,e_k)\in\{\varnothing,*\}^k$.
\end{Remark}

\subsection{No-go de Finetti theorem for faithful states}

In the case of usual $W^*$-probability spaces $(M,\varphi)$, where $\varphi$ is a faithful state, we prove that there exist no non-trivial sequences that are invariant under the $*$-bialgebraic action of $U^{\rm nc}$. This constitutes our no-go de Finetti theorem for the Brown algebra under these kind of actions.

\begin{Theorem}\label{thm-bialg-DeFin}
Let $(x_i)_{i\in\mathbb{N}}$ be an infinite sequence of random variables in some $W^*$-probability space $(M,\varphi)$.
The joint $*$-distribution $\varphi_x$ of $(x_i)_{i\in\mathbb{N}}$ is invariant under the $*$-bialgebraic action of $U^{\rm nc}$ if and only if $x_i=0$ for all $i\in\mathbb{N}$.
\end{Theorem}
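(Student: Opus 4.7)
The plan is as follows. The reverse direction is immediate: if $x_i=0$ for every $i$, then $\varphi_x$ vanishes on every non-constant monomial in the $t_j$'s, so both sides of~\eqref{eq-inv-bialg} reduce to $0$ for $k\ge 1$, and the identity is trivial for $k=0$. For the forward direction, my strategy is to isolate a single second-moment $\varphi(x_i x_i^*)$ by pairing the invariance equation with the Haar trace $h_n$ of the Brown algebra and exploiting the very restrictive support of the free cumulants of the $u_{jk}$'s given by Proposition~\ref{cor_CU15}.

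Concretely, I would specialise~\eqref{eq-inv-bialg} to $k=2$, $(e_1,e_2)=(\varnothing,*)$ and $j_1=j_2=i$, which reads
\[
\sum_{1\le a,b\le n} u_{ia}\,u_{ib}^*\,\varphi(x_a x_b^*)\;=\;\varphi(x_i x_i^*)\,1,
\]
and which holds for every $n\ge i$ since the sequence is invariant under each $\gamma_n$. Applying $h_n$ to both sides, I first observe that $h_n(u_{jk})=\kappa_1^{h_n}(u_{jk})=0$ (the case $m=1$ is excluded in Proposition~\ref{cor_CU15} as it is not even), so the moment-cumulant formula reduces $h_n(u_{ia}u_{ib}^*)$ to the single two-cumulant $\kappa_2^{h_n}(u_{ia},u_{ib}^*)$. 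By Proposition~\ref{cor_CU15}, this cumulant vanishes unless the cyclic index condition is met, which here forces $a=b=i$, and in that case its value is exactly $n^{-1}$. The left-hand side therefore collapses to $n^{-1}\varphi(x_i x_i^*)$ while the right-hand side is $\varphi(x_i x_i^*)$, yielding
\[
\bigl(1-n^{-1}\bigr)\,\varphi(x_i x_i^*)=0.
\]

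Choosing any $n\ge\max(i,2)$, which is allowed because the sequence is infinite and invariance is assumed for every $\gamma_n$, the factor $1-n^{-1}$ is nonzero and hence $\varphi(x_i x_i^*)=0$. Faithfulness of $\varphi$ on the $W^*$-probability space then forces $x_i=0$, and since $i$ was arbitrary the theorem follows. I do not anticipate a serious obstacle: the argument rests entirely on the observation that, after averaging by $h_n$, the quadratic expression $u_{ia}u_{ib}^*$ keeps only the single diagonal contribution $(a,b)=(i,i)$, with a coefficient $n^{-1}$ that is strictly less than $1$ as soon as $n\ge 2$. Thus the characteristic ``rigidity'' of the Brown-algebra cumulants, already encoded in Proposition~\ref{cor_CU15}, is what forces the trivialisation, and no further structural input is required.
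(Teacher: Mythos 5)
Your reverse direction is fine, but the forward direction rests on a concrete computational error. To evaluate $\kappa_2^{h_n}(u_{ia},u_{ib}^*)$ via Proposition~\ref{cor_CU15} you must write $u_{ib}^*=(u^*)_{bi}$, so the index tuple is $(i_1,k_1)=(i,a)$, $(i_2,k_2)=(b,i)$; the cyclicity condition $i_1=k_2$, $i_2=k_1$ then forces only $a=b$, \emph{not} $a=b=i$. Hence $h_n(u_{ia}u_{ib}^*)=\delta_{ab}/n$ (as a sanity check, applying $h_n$ to the defining relation $\sum_{\ell}u_{i\ell}u_{i\ell}^*=1$ gives $1$, whereas your formula would give $1/n$). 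With the correct value, your averaged invariance relation becomes
\[
\frac{1}{n}\sum_{a=1}^n\varphi(x_ax_a^*)=\varphi(x_ix_i^*),
\]
i.e., each second moment equals their common average --- an exchangeability-type statement satisfied by any family with identical second moments. The factor $(1-n^{-1})$ never appears and nothing forces $\varphi(x_ix_i^*)=0$, so the trivialization does not follow.

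Moreover, the strategy of pairing the invariance identity with $h_n$ (or any other state) cannot be repaired: Proposition~\ref{prop-bialg-DeFin} exhibits nonzero sequences in weak $W^*$-probability spaces satisfying the full identity~\eqref{eq-inv-bialg}, hence all of its scalar consequences, with $\psi(x_ix_i^*)\neq 0$; so relations among moments alone can never force vanishing, and the faithfulness hypothesis must interact with the \emph{operator-valued} nature of~\eqref{eq-inv-bialg}. This is exactly what the paper does: by Lemma~\ref{lem-fd-rep} the Brown algebra admits the finite-dimensional $*$-representation $\pi_n(u_{jk})=e_{kj}$ (which does not exist for $U_n^+$), and applying $\pi_n$ to~\eqref{eq-inv-bialg} for the word $x_j^*x_k$ yields $\varphi(x_j^*x_k)1=e_{jk}\sum_{i}\varphi(x_i^*x_i)$ in $M_n(\mathbb{C})$. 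For $n\ge 2$ the matrices $1$ and $e_{jk}$ are linearly independent, so $\sum_i\varphi(x_i^*x_i)=0$, hence $\varphi(x_i^*x_i)=0$ for every $i$ by positivity, and faithfulness gives $x_i=0$. In short, it is the existence of such non-scalar representations of $\operatorname{Pol}(U_n^{\rm nc})$, not the rigidity of the Haar-trace cumulants, that drives the no-go theorem.
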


In the proof of this theorem, we will use the following finite-dimensional representations of $\operatorname{Pol}(U^{\rm nc}_n)$.

\begin{Lemma}\label{lem-fd-rep}
There exists a unique unital $*$-homomorphism $\pi_n\colon \operatorname{Pol}(U^{\rm nc}_n)\to M_n(\mathbb{C})$ such that
\[
\pi_n(u_{jk}) = e_{kj}
\]
for $1\le j$, $k\le n$. This homomorphism does not factorize via the quotient map $q_n\colon \operatorname{Pol}(U^{\rm nc}_n)\to \operatorname{Pol}(U^+_n)$ for $n\ge 2$.
\end{Lemma}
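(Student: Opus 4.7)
The plan is to split the lemma into two parts: first the existence and uniqueness of $\pi_n$, then the obstruction to factoring through $\operatorname{Pol}(U_n^+)$. Both parts reduce to direct computations with matrix units, using the basic rule $e_{ij}e_{kl}=\delta_{jk}e_{il}$ and $e_{ij}^*=e_{ji}$.

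For existence, I invoke the universal property of $\operatorname{Pol}(U_n^{\rm nc})$: it suffices to check that the assignment $u_{jk}\mapsto e_{kj}$ respects the two defining relations. For the first relation, $\pi_n(u_{j\ell}u_{k\ell}^*)=e_{\ell j}e_{k\ell}^*{}^*$, more carefully $\pi_n(u_{j\ell})=e_{\ell j}$ and $\pi_n(u_{k\ell}^*)=e_{\ell k}^{\,*}=e_{k\ell}$, so $\pi_n(u_{j\ell}u_{k\ell}^*)=e_{\ell j}e_{k\ell}=\delta_{jk}e_{\ell\ell}$ and summing over $\ell$ yields $\delta_{jk}I$. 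For the second, $\pi_n(u_{\ell j}^*u_{\ell k})=e_{\ell j}^{\,*}e_{\ell k}^{\,*}{}^*=e_{j\ell}^{\,*}\cdot e_{k\ell}$; more carefully $\pi_n(u_{\ell j}^*)=e_{j\ell}^{\,*}=e_{\ell j}$ and $\pi_n(u_{\ell k})=e_{k\ell}$, giving $e_{\ell j}e_{k\ell}=\delta_{jk}e_{\ell\ell}$, which again sums to $\delta_{jk}I$. So the relations are satisfied and $\pi_n$ exists. Uniqueness is immediate since the $u_{jk}$ generate $\operatorname{Pol}(U_n^{\rm nc})$ as a unital $*$-algebra.

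For the non-factorization through $q_n$, I use that $\ker(q_n)$ is the $*$-ideal generated by the extra relations $\sum_\ell u_{\ell j}u_{\ell k}^*-\delta_{jk}1$ and $\sum_\ell u_{j\ell}^*u_{k\ell}-\delta_{jk}1$. It is therefore enough to exhibit one such generator on which $\pi_n$ does not vanish. Taking $j=k$ in the first family, I compute
\[
\pi_n\Bigl(\sum_{\ell=1}^n u_{\ell j}u_{\ell j}^*\Bigr)=\sum_{\ell=1}^n e_{j\ell}e_{\ell j}=\sum_{\ell=1}^n e_{jj}=n\,e_{jj},
\]
which is not equal to $\pi_n(1)=I$ as soon as $n\ge 2$, since $ne_{jj}$ has rank one while $I$ has rank $n$. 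Hence $\ker(q_n)\not\subseteq\ker(\pi_n)$, so $\pi_n$ does not descend through the quotient.

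There is no real obstacle here; the only thing to be careful about is bookkeeping of indices and adjoints in the matrix unit computations, since the assignment $u_{jk}\mapsto e_{kj}$ transposes the indices. It is essentially this index swap that makes the first set of relations for $\operatorname{Pol}(U_n^{\rm nc})$ hold while breaking the transposed relations that cut out $\operatorname{Pol}(U_n^+)$; this can be phrased conceptually by saying that $\pi_n$ realizes $u$ as the unitary $u=(e_{kj})_{j,k}$, but the transpose $u^t=(e_{jk})_{j,k}=\sum_{j,k}e_{jk}\otimes\cdot$ is the rank-one matrix with all entries equal, hence manifestly non-invertible for $n\ge 2$.
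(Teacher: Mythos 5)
Your proof is correct and follows essentially the same route as the paper: verify the two defining relations of $\operatorname{Pol}(U_n^{\rm nc})$ on the matrix units to get existence and uniqueness, then evaluate $\pi_n$ on one of the extra relations defining $\operatorname{Pol}(U_n^+)$ and observe it yields $n\,e_{jj}$ rather than the identity, so the map cannot descend for $n\ge 2$. (Only your closing aside is slightly off: $\pi_n$ applied entrywise to $u^t$ gives the matrix $(e_{jk})_{j,k}$, which is rank one but not "all entries equal"; this is immaterial to the argument.)
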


\begin{proof}
Indeed, the assignment $u_{jk}\mapsto e_{kj}$, $u^*_{jk}\mapsto e_{jk}$ satisfies the two defining relations $uu^*=1=u^*u$ of $\operatorname{Pol}(U^{\rm nc}_n)$:
\begin{gather*}
\sum_k \pi_n(u_{ik})\pi_n(u^*_{jk}) = \delta_{ij} \sum_k e_{kk} = \delta_{ij} 1, \qquad
\sum_k \pi_n(u^*_{ki})\pi_n(u_{kj}) = \delta_{ij} \sum_k e_{kk} = \delta_{ij} 1.
\end{gather*}
But for $n\ge 2$ it does not satisfy the other two relations $u^t \overline{u}=1=\overline{u}u^t$ that define $\operatorname{Pol}(U^+_n)$, instead we have
\begin{gather*}
\sum_k \pi_n(u^*_{ik})\pi_n(u_{jk}) = n e_{ij}, \qquad
\sum_k \pi_n(u_{ki})\pi_n(u^*_{ki}) = n e_{ij}.\tag*{\qed}
\end{gather*}\renewcommand{\qed}{}
\end{proof}

\begin{proof}[Proof of \Cref{thm-bialg-DeFin}]
Let $n\ge 2$. Applying the $*$-representation $\pi_n$ defined in \Cref{lem-fd-rep} to invariance condition \eqref{eq-inv-bialg} for products of the form $x_j^*x_k$, $1\le j,k\le n$, we get
\[
\varphi(x_j^* x_k)1 = \gamma\Bigg(\sum_{i_1,i_2 = 1}^n u_{ji_1}^* u_{ki_2} \varphi(x_{i_1}^* x_{i_2})\Bigg) = e_{jk} \sum_{i=1}^n \varphi(x_i^* x_i),
\]
which implies in particular $\varphi(x_i^*x_i)=0$ and therefore, by faithfulness of $\varphi$, $x_i=0$ for all $1\le i \le n$.
\end{proof}

\subsection{Half a de Finetti theorem for non-faithful states}

Our proof of \Cref{thm-bialg-DeFin} depends in a crucial way on the assumption that the von Neumann algebra $M$ is equipped with a \emph{faithful} state. We will now show that there do exist sequences that are invariant under the $*$-bialgebraic action of $U^{\rm nc}$, if we weaken this assumption. Let us call a~pair $(M,\psi)$ of a von Neumann algebra equipped with a not necessarily faithful state $\psi$ a~\emph{weak} $W^*$-probability space. It is straightforward to extend the notions of a joint $*$-distribution in \Cref{def-jointdistribution} and of invariance under the $*$-bialgebraic action of $U^{\rm nc}$ in \Cref{def-inv-bialg} to weak $W^*$-probability spaces.

We have half a de Finetti theorem for the case of weak $W^*$-probability spaces, i.e., we can only prove one direction of the usual de Finetti theorems.

\begin{Proposition}\label{prop-bialg-DeFin}
Let $(x_i)_{i\in\mathbb{N}}$ be an infinite sequence of elements of a weak $W^*$-probability space $(M,\psi)$.

If there exists a $W^*$-subalgebra $\mathbf{1}\in B\subseteq M$ and a conditional expectation $E\colon M\to B$ such that $(x_i)_{i\in\mathbb{N}}$ is a $B$-valued free centered circular family whose elements have identical variances
\[
B\ni b\mapsto \theta(b)=E(x_ibx_i^*)\in B \qquad\text{and}\qquad B\ni b\mapsto \eta(b)=E(x_i^*bx_i)=0,
\]
for all $i\in\mathbb{N}$, then the joint distribution of $(x_i)_{i\in\mathbb{N}}$ is invariant under the $*$-bialgebraic action of~$U^{\rm nc}$.
\end{Proposition}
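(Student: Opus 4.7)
The plan is to verify the invariance of $\varphi_x$ on all monomials by writing both sides of the invariance relation \eqref{eq-inv-bialg} as explicit sums indexed by noncrossing pair partitions, and then matching them termwise using the defining relations of the Brown algebra. Fix $k \geq 1$, $\underline{e} = (e_1, \dots, e_k) \in \{\varnothing, *\}^k$, and $\underline{j} \in [n]^k$. Using the $B$-valued moment-cumulant formula recalled in Example 3.11, together with the hypothesis $\eta = 0$, the moment $\psi(x_{i_1}^{e_1}\cdots x_{i_k}^{e_k})$ expands as a sum over noncrossing pair partitions $\pi$ of $[k]$ whose pairs $\{s,t\}$ (with $s<t$) satisfy $(e_s, e_t) = (\varnothing, *)$ and $\pi \preceq \ker \underline{i}$. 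Moreover, each resulting $B$-valued cumulant $\kappa^E_\pi$ is a fixed nested evaluation of $\theta$, depending only on the combinatorial data of $\pi$ and $\underline{e}$ but not on the specific index values on each block, since the variance $\theta$ is common to all the $x_i$; denote the corresponding scalar by $\alpha_\pi := \psi\big(\kappa^E_\pi[x_1^{e_1}, \dots, x_1^{e_k}]\big)$.

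Substituting this expansion on the left-hand side of \eqref{eq-inv-bialg} and exchanging the order of summation reduces the task to proving, for each admissible $\pi$,
\[
S_\pi(\underline{j}) := \sum_{\underline{i}\,:\,\pi \preceq \ker \underline{i}} u_{j_1 i_1}^{e_1} \cdots u_{j_k i_k}^{e_k} = [\pi \preceq \ker \underline{j}]\,\mathbf{1}
\]
in $\operatorname{Pol}(U_n^{\rm nc})$. Indeed, the right-hand side of \eqref{eq-inv-bialg} admits the same expansion $\sum_{\pi \preceq \ker \underline{j}} \alpha_\pi \mathbf{1}$ by the same moment-cumulant formula applied at $\underline{j}$, so this identity yields the desired equality. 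I would prove the identity for $S_\pi$ by induction on the number of pairs. The noncrossing structure guarantees the existence of an inner pair $\{s,s+1\}$ of consecutive positions; its enforced pattern $(e_s, e_{s+1}) = (\varnothing, *)$ means the two factors at these positions appear in the product as $u_{j_s \ell} u_{j_{s+1} \ell}^*$ with common dummy index $\ell$, and summing over $\ell$ collapses them via the Brown algebra relation $\sum_\ell u_{a\ell} u_{b\ell}^* = \delta_{ab}\mathbf{1}$. Removing this pair produces a strictly shorter noncrossing pair partition on $[k]\setminus\{s,s+1\}$ of the same admissible type, and the induction closes.

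The main obstacle is precisely this combinatorial identity for $S_\pi$, and it is here that the hypothesis $\eta = 0$ is essential. If $\eta$ were nonzero, admissible partitions would also include inner pairs of the reversed pattern $(*, \varnothing)$, requiring evaluation of sums $\sum_\ell u_{j_s\ell}^* u_{j_{s+1}\ell}$. This relation is \emph{not} among the defining relations of the Brown algebra -- it would hold in $\operatorname{Pol}(U_n^+)$ but not in $\operatorname{Pol}(U_n^{\rm nc})$ -- so no analogous collapse is available. This is the same algebraic obstruction that underlies the no-go \Cref{thm-bialg-DeFin}, and it forces the asymmetric variance condition. A minor standing technicality is the identification $\psi(a_1\cdots a_k) = \psi(E(a_1\cdots a_k))$ used to pass from the $B$-valued to the scalar moment-cumulant expansion; this presumes $\psi = \psi \circ E$, which is the standard convention for conditional expectations in this free-probabilistic framework.
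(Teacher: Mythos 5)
Your proposal is correct and follows essentially the same route as the paper: your expansion over $(\varnothing,*)$-patterned noncrossing pair partitions with $\eta=0$ is exactly the paper's formula \eqref{eq-cum}, and your identity $S_\pi(\underline{j})=[\pi\preceq\operatorname{ker} \underline{j}]\,\mathbf{1}$ is precisely \Cref{lem-rel-Brown}, proved by the same induction (collapse an innermost interval pair via $\sum_\ell u_{a\ell}u_{b\ell}^*=\delta_{ab}\mathbf{1}$ and remove it). The compatibility $\psi=\psi\circ E$ that you flag is likewise used implicitly in the paper's proof, so nothing further is needed.
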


For $e=(e_1,\dots,e_{2k})\in\{\varnothing,*\}^{2k}$, denote by
\[
\operatorname{NC}^{e,(\varnothing,*)}_2(k) = \{\pi\in \operatorname{NC}_2(k);\forall \{s,t\}\in\pi, e_{\min(s,t)}=\varnothing,e_{\max(s,t)}=*\},
\]
the pair partitions whose pairs `join' a $\varnothing$ to a $*$.

We will need the following lemma.
\begin{Lemma}\label{lem-rel-Brown}
Let $k\ge 1$, $j=(j_1,\dots,j_{2k})\in\{1,\dots,n\}^{2k}$, $e=(e_1,\dots,e_{2k})\in\{\varnothing,*\}^{2k}$, and $\pi\in \operatorname{NC}^{e,(\varnothing,*)}_2(k)$. Then we have
\[
\sum_{1\le i_1,\dots,i_{2k}, \, \pi\preceq \operatorname{ker} i} u^{e_1}_{j_1i_1}\cdots u^{e_{2k}}_{j_{2k}i_{2k}} =
 \begin{cases}
1 & \text{if } \pi \preceq \operatorname{ker} j, \\
0 & \text{else}.
\end{cases}
\]
\end{Lemma}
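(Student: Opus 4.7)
The plan is to prove this by induction on $k$, exploiting the fact that any noncrossing pair partition admits an innermost pair consisting of two consecutive indices. The defining relations of the Brown algebra $\operatorname{Pol}(U_n^{\rm nc})$, namely
\[
\sum_{\ell=1}^n u_{j\ell}u^*_{k\ell} = \delta_{jk}\mathbf{1},
\]
are precisely what make such an innermost pair collapse to either a Kronecker delta on the $j$-indices or zero.

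For the base case $k=1$, the only partition in $\operatorname{NC}^{e,(\varnothing,*)}_2$ is $\pi = \{\{1,2\}\}$ with $e_1=\varnothing$, $e_2=*$, and the constraint $\pi\preceq \operatorname{ker} i$ simply forces $i_1=i_2$. The sum then becomes $\sum_{\ell=1}^n u_{j_1\ell}u^*_{j_2\ell}=\delta_{j_1j_2}\mathbf{1}$, which coincides with the indicator of $\pi\preceq \operatorname{ker} j$.

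For the inductive step, I would pick an innermost pair $\{s,s+1\}\in\pi$ (which must exist in any noncrossing pair partition on $\{1,\dots,2k\}$). By membership in $\operatorname{NC}^{e,(\varnothing,*)}_2$, we have $e_s=\varnothing$ and $e_{s+1}=*$, and the constraint $\pi\preceq \operatorname{ker} i$ forces $i_s=i_{s+1}$. Using the Brown algebra relation at that position,
\[
\sum_{i_s=1}^n u_{j_s i_s}u^*_{j_{s+1}i_s} = \delta_{j_sj_{s+1}}\mathbf{1},
\]
factors out of the sum. Let $\pi'$ denote the noncrossing pair partition on the remaining $2(k-1)$ indices obtained by removing $\{s,s+1\}$; since the pair is innermost, $\pi'$ still lies in $\operatorname{NC}^{e',(\varnothing,*)}_2$ for the corresponding sub-tuple $e'$. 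The induction hypothesis then yields $\mathbf{1}$ if $\pi'\preceq \operatorname{ker} j'$ and $0$ otherwise, and combining with the factor $\delta_{j_sj_{s+1}}$ gives exactly the indicator of $\pi\preceq \operatorname{ker} j$.

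The only subtle point is verifying that the innermost pair of a noncrossing pair partition always consists of consecutive indices, and that its removal produces again a noncrossing pair partition on the remaining elements; both are standard facts about $\operatorname{NC}_2$. No substantial obstacle is anticipated beyond making the indexing explicit.
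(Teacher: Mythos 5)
Your proof is correct and follows essentially the same route as the paper: induction on $k$, locating an interval pair $\{\ell,\ell+1\}$ of $\pi$ (necessarily with $e_\ell=\varnothing$, $e_{\ell+1}=*$), collapsing it via the defining relation $\sum_i u_{j_\ell i}u^*_{j_{\ell+1}i}=\delta_{j_\ell j_{\ell+1}}1$, and applying the induction hypothesis to the reduced partition. The scalar $\delta_{j_\ell j_{\ell+1}}1$ indeed factors out despite noncommutativity, so no gap remains.
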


\begin{proof}
We will prove this by induction.
For $k=1$ this is simply one of the defining relations of $\operatorname{Pol}(U_n^{\rm nc})$.

Suppose now $k>1$. Then $\pi$ contains an interval $V=(\ell,\ell+1)$ with $e_\ell=\varnothing$, $e_{\ell+1}=*$. Therefore
\begin{gather*}
\sum_{1\le i_1,\dots,i_{2k}, \,\pi\preceq \operatorname{ker} i} u^{e_1}_{j_1i_1}\cdots u^{e_{2k}}_{j_{2k}i_{2k}} \\
\qquad{} = \sum_{1\le i_1,\dots, i_{\ell-1},i_{\ell+2},\dots, i_{2k},\, \pi\backslash V\preceq \operatorname{ker} i} u^{e_1}_{j_1i_1}\cdots \underbrace{\left(\sum_{i=1}^n u_{j_\ell i} u^*_{j_{\ell+1} i}\right)}_{=\delta_{j_\ell j_{\ell+1}}}\cdots u^{e_{2k}}_{j_{2k}i_{2k}},
\end{gather*}
from which the result follows with the induction hypothesis.
\end{proof}

We will also need an expression for the conditional expectation of circular elements with one vanishing variance $\eta(b)= \kappa_{2}^E(x_i^*b, x_1)\equiv0$. From the last formula in Section~\ref{sect-opval}, we get in this case
\begin{equation}\label{eq-cum}
E\big(b_0x_{i_1}^{e_1}b_1\cdots x_{i_k}^{e_k}\big) =
 \begin{cases}
\displaystyle \sum_{\pi\in \operatorname{NC}^{e,(\varnothing,*)}_2(k),\, \pi\preceq \operatorname{ker} i} \kappa_{\pi}^E \big[b_0x_1b_1, \dots , x_1^{e_k}b_k\big] & \text{if $k$ even}, \\
0 & \text{if $k$ odd}.
\end{cases}
\end{equation}

\begin{proof}[Proof of \Cref{prop-bialg-DeFin}]
We check that equation~\eqref{eq-inv-bialg} is satisfied. For the odd moments we clearly have
\[
\sum_{1\le i_1,\dots,i_{2k+1}\le n} u^{e_1}_{j_1i_1}\cdots u^{e_{2k+1}}_{j_{2k+1}i_{2k+1}}\varphi\big(x^{e_1}_{i_1}\cdots x^{e_{2k+1}}_{i_{2k+1}}\big) = 0 = \varphi\big(x^{e_1}_{j_1}\cdots x^{e_{2k+1}}_{j_{2k+1}}\big)1.
\]
Let $j=(j_1\dots,j_{2k})\in\{1,\dots,n\}^{2k}$ and $e=(e_1,\dots,e_{2k})\in\{\varnothing,*\}^{2k}$.
Then
\begin{align*}
&\sum_{1\le i_1,\dots,i_{2k}\le n} u^{e_1}_{j_1i_1}\cdots u^{e_{2k}}_{j_{2k}i_{2k}}\varphi\big(x^{e_1}_{i_1}\cdots x^{e_{2k}}_{i_{2k}}\big) \\
&\quad{}= \sum_{1\le i_1,\dots,i_{2k}\le n} u^{e_1}_{j_1i_1}\cdots u^{e_{2k}}_{j_{2k}i_{2k}}\varphi\big(E\big(x^{e_1}_{i_1}\cdots x^{e_{2k}}_{i_{2k}}\big)\big) \\
&\quad{}= \sum_{1\le i_1,\dots,i_{2k}\le n} u^{e_1}_{j_1i_1}\cdots u^{e_{2k}}_{j_{2k}i_{2k}}\varphi\Bigg(\sum_{\pi\in \operatorname{NC}_2^{e,(\varnothing,*)},\,\pi\preceq \operatorname{ker} i} \kappa^E_{\pi}\big(x^{e_1}_{i_1}, \dots , x^{e_{2k}}_{i_{2k}}\big)\Bigg) \\
&\quad{}= \sum_{\pi\in \operatorname{NC}_2^{e,(\varnothing,*)}} \sum_{1\le i_1,\dots,i_{2k}\le n,\, \pi\preceq \operatorname{ker} i} u^{e_1}_{j_1i_1}\cdots u^{e_{2k}}_{j_{2k}i_{2k}}\varphi\big( \kappa^E_{\pi}\big(x^{e_1}_{i_1}, \dots , x^{e_{2k}}_{i_{2k}}\big)\big) \\
&\quad{}=\sum_{\pi\in \operatorname{NC}_2^{e,(\varnothing,*)}} \varphi\big( \kappa^E_{\pi}\big(x^{e_1}_{j_1}, \dots , x^{e_{2k}}_{j_{2k}}\big)\big) 1
= \varphi\big(x^{e_1}_{j_1}\cdots x^{e_k}_{j_k}\big)1,
\end{align*}
where we used first \Cref{lem-rel-Brown}, and then equation~\eqref{eq-cum}.
\end{proof}

\subsection*{Acknowledgements}
M.W.\ is supported by \emph{SFB-TRR 195} and \emph{DFG Heisenbergprogramm}.
I.B.\ and U.F.\ are supported by an ANR project (No. ANR-19-CE40-0002). G.C.\ is supported by the Project MESA (ANR-18-CE40-006) and by the Project STARS (ANR-20-CE40-0008) of the French National Research Agency (ANR).
We acknowledge the DAAD Procope program held by Roland Vergnioux and the fifth author from 2019--2020.

\pdfbookmark[1]{References}{ref}
\LastPageEnding

\end{document}